\def\smallint{\begingroup\textstyle \int\endgroup}
\newtheorem*{theorem*}{Theorem}
\newtheorem*{definition*}{Definition}
\newtheorem*{proposition*}{Proposition}
\newtheorem{theorem}{Theorem}[section]
\newtheorem{proposition}[theorem]{Proposition}
\newtheorem{definition}[theorem]{Definition}
\newtheorem{example}[theorem]{Example}
\newtheorem{remark}[theorem]{Remark}
\newtheorem{corollary}[theorem]{Corollary}
\newtheorem{lemma}[theorem]{Lemma}
\newtheorem{question}[theorem]{Question}
\newcommand\blfootnote[1]{%
  \begingroup
  \renewcommand\thefootnote{}\footnote{#1}%
  \addtocounter{footnote}{-1}%
  \endgroup
}
\begin{document}
\title{THH and traces of enriched categories}
\author{John D. Berman}
\maketitle

\begin{abstract}\blfootnote{The author was supported by an NSF Postdoctoral Fellowship under grant 1803089.}
We prove that topological Hochschild homology (THH) arises from a presheaf of circles on a certain combinatorial category, which gives a universal construction of THH for any enriched $\infty$-category.

Our results rely crucially on an elementary, model-independent framework for enriched higher category theory, which may be of independent interest.
\end{abstract}

\section{Introduction}
\noindent Those interested only in enriched category theory, read Sections 1.3 and 2.

\subsection{Background}
\noindent The topological Hochschild homology (THH) of a ring or ring spectrum $R$ is an object much studied in recent years, because it can be used in favorable cases to compute the algebraic K-theory of $R$. The strategy takes advantage of a rich \emph{cyclotomic} structure on $\text{THH}(R)$, which is a refinement of a natural circle action. This so-called \emph{trace methods} approach to K-theory originated with the Dennis trace in the 70s and Bokstedt's work \cite{Bokstedt} in the 80s, and has taken off since then. A modern account is \cite{Arbeit}.

THH has a variety of other structure in addition to the circle action. First, it is Morita invariant, and therefore lifts to an invariant of $\infty$-categories enriched in spectra. (Even more, THH is an invariant of noncommutative motives \cite{BGT} 10.2.)

If $\mathcal{C}$ is a spectral $\infty$-category, then $\text{THH}(\mathcal{C})$ is the geometric realization of the simplicial spectrum $$\text{THH}(\mathcal{C})_n=\coprod_{X_0,\ldots,X_n\in\mathcal{C}}\text{Hom}(X_0,X_1)\otimes\cdots\otimes\text{Hom}(X_n,X_0),$$ which is called the cyclic bar construction. If $\mathcal{C}$ has a single object, we may identify it with an $\mathbb{E}_1$-ring spectrum $R$, and we recover the original notion of THH as the geometric realization of $$\text{THH}(R)_n=R^{\otimes n+1}.$$ Second, THH is a trace functor \cite{Kaledin} \cite{CampbellPonto}. That is, we may further generalize, defining $\text{THH}(F)$ for any endomorphism $F:\mathcal{C}\rightarrow\mathcal{C}$ of a spectral $\infty$-category. Usual THH is recovered via $\text{THH}(\mathcal{C})=\text{THH}(\text{id}_\mathcal{C})$, and THH satisfies the trace identity $\text{THH}(FG)\cong\text{THH}(GF)$.

Working at this level of generality is useful because the trace identity on THH implies many other useful properties, including Morita invariance \cite{HSS}. It is therefore natural to ask:

\begin{question}\label{Q1}
To what extent does the trace identity uniquely determine THH? That is, to what extent is THH the universal trace functor on the $\infty$-category $\text{Cat}^\text{Sp}$ of spectral $\infty$-categories?
\end{question}

\noindent The trace identity asserts that the functor $\text{THH}:\coprod_\mathcal{C}\text{End}(\mathcal{C})\rightarrow\text{Sp}$ coequalizes (up to homotopy) the diagram $$\coprod_{\mathcal{C}_0,\mathcal{C}_1}\text{Fun}^\text{Sp}(\mathcal{C}_0,\mathcal{C}_1)\otimes\text{Fun}^\text{Sp}(\mathcal{C}_1,\mathcal{C}_0)\rightrightarrows\coprod_{\mathcal{C}}\text{Fun}^\text{Sp}(\mathcal{C},\mathcal{C}).$$ This diagram itself is the 1-skeleton of a cyclic bar construction, taking place in $\text{Cat}^\text{Sp}$ (rather than Sp). We might hope that THH is compatible with the entire cyclic bar construction, not just its 1-skeleton. To formalize this situation, we are forced to study THH of more general enriched $\infty$-categories.

\begin{definition}\label{Def1}
If $\mathcal{V}$ is a symmetric monoidal $\infty$-category\footnote{Since $\text{THH}(\mathcal{C})$ is a colimit, we should either assume $\mathcal{V}$ is presentable, or define $\text{THH}(\mathcal{C})$ as a presheaf on $\mathcal{V}$. We will ignore this unimportant subtlety in the introduction.} and $\mathcal{C}$ is a $\mathcal{V}$-enriched category, $\text{THH}(\mathcal{C})\in\mathcal{V}$ is the geometric realization of the cyclic bar construction (in $\mathcal{V}$): $$\text{THH}(\mathcal{C})_n=\coprod_{X_0,\ldots,X_n\in\mathcal{C}}\text{Hom}(X_0,X_1)\otimes\cdots\otimes\text{Hom}(X_n,X_0).$$
\end{definition}

\begin{definition}
If $\mathcal{C}$ is a $\mathcal{V}$-enriched category and $X\in\mathcal{V}$, a \emph{homotopy-coherent trace functor} from $\mathcal{C}$ to $X$ is a morphism $\text{THH}(\mathcal{C})\rightarrow X$.
\end{definition}

\noindent Then we might ask, en route to answering Question \ref{Q1}:

\begin{question}
Can THH be promoted to a homotopy-coherent trace functor $\text{THH}(\text{Cat}^\text{Sp})\xrightarrow{\text{THH}}\text{Sp}$?
\end{question}

\noindent We will not answer these questions. However, if we hope to study questions of a formal nature like these, Definition \ref{Def1} is not ideal. It is fundamentally a calculation of THH, when we would prefer a universal property. For example, it obscures the circle action (and ensuing cyclotomic structure) on THH, and it makes explicit reference to the set of objects of $\mathcal{C}$.

It also relies implicitly on enriched $\infty$-categories, for which the state of the art (largely due to Gepner and Haugseng \cite{GH}) is rather technical and dependent on the particular model of quasicategories.

Motivated by these objections, we will:
\begin{enumerate}
\item present a combinatorial, model-independent framework for enriched higher category theory; a $\mathcal{V}$-enriched category is a symmetric monoidal functor $\text{Bypass}_S\rightarrow\mathcal{V}$ from a category of combinatorial graphs;
\item present a universal construction of THH, using this framework; THH is the pushforward along $\text{Bypass}_S\rightarrow\mathcal{V}$ of a certain presheaf $\mathcal{O}_\text{thh}$ on $\text{Bypass}_S$ obtained as a push-pull construction applied to the circle;
\item explicitly calculate the presheaf $\mathcal{O}_\text{thh}$.
\end{enumerate}

\subsection{First results}
\noindent As a warm-up, a special case of (2) can be stated without any enriched category theory.

Suppose that $A$ is an associative algebra in $\mathcal{V}$ (equivalently, a $\mathcal{V}$-enriched category with one object). We regard $A$ as a symmetric monoidal functor $$\text{Ass}\xrightarrow{A}\mathcal{V},$$ where Ass is the \emph{associative PROP} (\cite{HA} 4.1.1): An object of Ass is a finite set, and a morphism a function $f:X\rightarrow Y$ with a total ordering of each $f^{-1}(y)$.

Let $\Lambda$ be Connes' cyclic category \cite{Connes}; roughly, the category of cyclically ordered finite sets. There is a functor $\Lambda\xrightarrow{k}\text{Ass}$ which forgets the cyclic ordering (see Proposition \ref{PropOthh}). Moreover, the classifying space of $\Lambda$ is $BS^1$ \cite{Connes}, so there is a functor of $\infty$-categories $\Lambda\xrightarrow{r}BS^1$ which formally inverts all the morphisms of $\Lambda$. In summary, we have the diagram: $$\xymatrix{
\Lambda\ar[r]^k\ar[d]_r &\text{Ass}\ar[r]^A &\mathcal{V} \\
BS^1. &&
}$$

\begin{theorem*}[\ref{ThmCase1}]
If $\mathcal{V}$ is presentable and $A$ is an associative algebra in $\mathcal{V}$, $$\text{THH}(A)\cong A_\ast k_\ast r^\ast(S^1).$$
\end{theorem*}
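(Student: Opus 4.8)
The plan is to reduce the statement to the tautological associative algebra by naturality, and then to recognize the resulting presheaf using the classical comparison between the simplex category and Connes' category.

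\emph{Step 1: reduction.} By Definition~\ref{Def1}, $\text{THH}(A)$ is the geometric realization of the cyclic bar construction. Since $A$ is the symmetric monoidal functor $\text{Ass}\to\mathcal{V}$ with $A(\underline n)=A^{\otimes n}$, and $k$ is the functor forgetting cyclic orders with $k([n])=\underline{n+1}$ (see Proposition~\ref{PropOthh}), the cyclic bar object $B^{\text{cyc}}_\bullet A$ is the restriction to $\Delta\subset\Lambda$ of the composite $A\circ k\colon\Lambda\to\text{Ass}\to\mathcal{V}$, so $\text{THH}(A)=\operatorname*{colim}_{\Delta}(A\circ k|_{\Delta})$. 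Because $\mathcal{V}$ is presentable, $A$ extends uniquely to a symmetric monoidal, colimit-preserving functor $A_\ast\colon\mathcal{P}(\text{Ass})\to\mathcal{V}$ with $A_\ast\circ y\simeq A$; in particular $A_\ast$ carries the tautological $\mathbb{E}_1$-algebra $\mathbf 1:=y(\underline 1)$ (for the Day convolution) to $A$. Since $\text{THH}$ of an algebra is a colimit of tensor powers, it commutes with symmetric monoidal colimit-preserving functors, whence $\text{THH}(A)\simeq A_\ast(\text{THH}(\mathbf 1))$. It therefore suffices to prove the universal case $\text{THH}(\mathbf 1)\simeq k_\ast r^\ast(S^1)$ in $\mathcal{P}(\text{Ass})$, and then apply $A_\ast$.

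\emph{Step 2: unwinding both sides.} Since $\mathbf 1^{\otimes n+1}\simeq y(k([n]))$, we have $\text{THH}(\mathbf 1)=\operatorname*{colim}_{\Delta}(y\circ k|_{\Delta})$, which is the left Kan extension of the terminal presheaf along $\Delta\xrightarrow{\iota}\Lambda\xrightarrow{k}\text{Ass}$. On the other side, $r$ is the localization $\Lambda\to\Lambda[\text{all morphisms}^{-1}]\simeq BS^1$, and $S^1\in\mathcal{P}(BS^1)\simeq\text{Fun}(BS^1,\mathcal{S})$ is the tautological circle $\Omega BS^1$ with its rotation action. The category of elements of $r^\ast(S^1)$ over $\Lambda$ is the homotopy fibre $\widetilde\Lambda:=\Lambda\times_{BS^1}\ast$ of $r$ — the total space of the tautological circle over $BS^1$ is contractible, the rotation action being free — which is the \emph{paracyclic category}; so $r^\ast(S^1)$ is the left Kan extension of the terminal presheaf along the projection $p\colon\widetilde\Lambda\to\Lambda$, and $k_\ast r^\ast(S^1)$ is its further left Kan extension along $k$. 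Thus both $\text{THH}(\mathbf 1)$ and $k_\ast r^\ast(S^1)$ are left Kan extensions of the terminal presheaf: along $\Delta\to\Lambda\to\text{Ass}$ in the first case, and along $\widetilde\Lambda\xrightarrow{p}\Lambda\xrightarrow{k}\text{Ass}$ in the second. They agree once one knows that the comparison functor $\Delta\to\widetilde\Lambda$ lying over $\iota$ induces an equivalence on colimits of every diagram pulled back from $\Lambda$ along $p$.

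\emph{Step 3: the main point.} What remains is precisely this statement: for each cyclic object $X\colon\Lambda\to\mathcal{V}$, the natural map $\operatorname*{colim}_{\Delta}(X|_{\Delta})\to\operatorname*{colim}_{\widetilde\Lambda}(X\circ p)$ is an equivalence — equivalently, $\Delta$ sits cofinally in the paracyclic category (for such diagrams). The intuition is that $\widetilde\Lambda\to\Lambda$ is the principal $S^1$-bundle pulled back from $ES^1\to BS^1$, so moving from $\Lambda$ to $\widetilde\Lambda$ undoes exactly the $S^1$-quotient recorded by the cyclic structure, reducing a cyclic colimit to the underlying simplicial one. I expect this cofinality to be the principal difficulty; it rests on the explicit combinatorics of $\text{Hom}_\Lambda$ (each morphism of $\Lambda$ is a morphism of $\Delta$ followed by a power of the cyclic operator $\tau_n$) and on Connes' computation $|\Lambda|\simeq BS^1$, which is also what underlies the explicit description of $\mathcal{O}_{\text{thh}}$ in Proposition~\ref{PropOthh}. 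Granting it, the identifications in Steps~1 and 2 are formal and the theorem follows.
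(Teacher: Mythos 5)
Your Step 1 is fine but is already built into the paper's framework: Definition~\ref{DefTHH} sets $\text{THH}(\mathcal C)=\mathcal C_\ast(\mathcal O_\text{thh})$, so everything reduces to identifying $\mathcal O_\text{thh}\in\mathcal P(\text{Ass})$, which is exactly your ``universal case.'' Steps 2 and 3 then propose a genuinely different route from the paper: you express both $\mathcal O_\text{thh}$ and $k_\ast r^\ast(S^1)$ as left Kan extensions of the terminal presheaf along $\Delta\to\Lambda\to\text{Ass}$ and along $\widetilde\Lambda\xrightarrow{p}\Lambda\to\text{Ass}$ respectively, and try to match them by a cofinality statement for $\Delta\hookrightarrow\widetilde\Lambda$. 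The paper instead proceeds via Lemma~\ref{LemCase1}: any $S^1$-equivariant presheaf on $\Lambda^\text{op}$ with contractible $hS^1$-quotient is forced to be of the form $r^\ast(S^1_{(n)})$ for some integer $n$ (a ``degree''), and the degree is pinned down to $\pm 1$ by the single colimit computation $\text{colim}_\Lambda(\overline{\mathcal O}_\text{thh})\simeq\ast$, which the paper extracts by a push--pull manipulation of the Yoneda bifunctor $\mathcal Y$ and Lemma~\ref{LemDHK}. That approach is what feeds into the degree classification Theorem~\ref{MainThm}; your approach would not give that uniqueness statement for free.

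The concrete gap is that Step 3 is asserted but not proved, and it is not a formality. The statement you need is not quite ``$\Delta\hookrightarrow\widetilde\Lambda$ is cofinal''; unwinding the Kan extensions, what you need is that for every finite set $T\in\text{Ass}$ the map $\bigl|\Delta\times_{\text{Ass}}\text{Ass}_{/T}\bigr|\to\bigl|\widetilde\Lambda\times_{\text{Ass}}\text{Ass}_{/T}\bigr|$ is an equivalence, equivalently that the left Kan extension of the terminal presheaf along $\Delta\to\widetilde\Lambda$ is again terminal, i.e.\ that $\Delta\to\widetilde\Lambda$ is \emph{initial} (coinitial/smooth) rather than cofinal; these are not the same condition, and the variance needs to be tracked carefully through the $\Lambda\simeq\Lambda^\text{op}$ identification the paper uses. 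Moreover, whichever form you settle on, it carries exactly the same content as the paper's ``degree $=\pm1$'' step: Lemma~\ref{LemDHK} tells you $|\mathbb T_n|$ is an $S^1$-torsor, hence that $\iota_!(\ast)$ is \emph{some} circle bundle $r^\ast(S^1_{(n)})$, but without further input (the paper's $\text{colim}_\Lambda(\overline{\mathcal O}_\text{thh})\simeq\ast$, or an explicit combinatorial analysis of the paracyclic morphism sets) it does not pin down $n=\pm1$. So you have correctly located the hard step, but you have not discharged it; the proof as written is incomplete at precisely the point where the paper's Lemma~\ref{LemCase1} and the $\mathcal Y$-manipulation in the proof of Theorem~\ref{ThmCase1} do the real work.
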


\noindent The notation requires some explanation:
\begin{itemize}
\item $S^1$ is the circle with the free $S^1$-action, regarded as an $S^1$-space and thus a presheaf $(BS^1)^\text{op}\rightarrow\text{Top}$;
\item $r^\ast:\mathcal{P}(BS^1)\rightarrow\mathcal{P}(\Lambda)$ denotes precomposition of a presheaf by $r$;
\item $k_\ast:\mathcal{P}(\Lambda)\rightarrow\mathcal{P}(\text{Ass})$ denotes left Kan extension along $k$;
\item $A_\ast:\mathcal{P}(\text{Ass})\rightarrow\mathcal{V}$ is the unique functor extending $A$ to $\mathcal{P}(\text{Ass})\supseteq\text{Ass}$ which preserves small colimits.
\end{itemize}

\noindent The upshot is that we have a universal construction of THH which makes explicit the $S^1$-action. In particular, there is a presheaf $\mathcal{O}_\text{thh}=k_\ast r^\ast S^1$ on the associative PROP Ass, described by a push-pull procedure applied to the circle, and $$\text{THH}(\mathcal{C})\cong\mathcal{C}_\ast\mathcal{O}_\text{thh}.$$ Our main results will be:

\begin{itemize}
\item (Theorem \ref{ThmCase2}) a generalization of Theorem \ref{ThmCase1}, replacing the associative algebra by any $\mathcal{V}$-enriched category;
\item (Theorem \ref{MainThm}) an explicit calculation of the presheaf $\mathcal{O}_\text{thh}$.
\end{itemize}

\begin{remark}
We will extend Theorem \ref{ThmCase1} by categorification, replacing associative algebras by enriched categories. We conjecture that it may also be generalized in other directions. For instance, we believe there is an analogue calculating factorization homology $\int_M A$ when $M$ is an $n$-manifold and $A$ is an $\mathbb{E}_n$-algebra.

If so, Theorem \ref{ThmCase1} would recover the well-known identification between THH and factorization homology over $S^1$. Ayala-Mazel-Gee-Rozenblyum \cite{AMGR} have results related to ours in the factorization homology setting.
\end{remark}

\subsection{Enriched categories}
\noindent We will use the following straightforward definition of enriched $\infty$-categories. For each set $S$, there is a symmetric monoidal category $\text{Bypass}_S$, and:

\begin{definition*}[\ref{DefEnr}]
If $\mathcal{V}$ is a symmetric monoidal $\infty$-category, a $\mathcal{V}$-enriched category with set $S$ of objects is a symmetric monoidal functor $$\mathcal{C}:\text{Bypass}_S\rightarrow\mathcal{V}.$$
\end{definition*}

\noindent We will now describe $\text{Bypass}_S$. An object is a directed graph on the fixed set $S$ of vertices. These are really \emph{multigraphs}, in that they may include multiple edges between the same two vertices, as well as loops from a vertex to itself.

A morphism in $\text{Bypass}_S$ is a combination of the following combinatorial moves that we call \emph{bypass operations}:
\begin{itemize}
\item choose an ordered set of edges forming a path $X_0\to X_1\to\cdots\to X_n$, and replace them by a single edge $X_0\to X_n$;
\item choose a vertex $X$, and add a new edge (loop) $X\to X$.
\end{itemize}

\noindent If $\Gamma,\Gamma^\prime\in\text{Bypass}_S$, we write $\Gamma\otimes\Gamma^\prime$ for the graph whose set of edges is the disjoint union of edges in $\Gamma$ and edges in $\Gamma^\prime$. In this way, $\text{Bypass}_S$ is symmetric monoidal, and the unit is the empty graph $\emptyset$ (no edges).

We also write $(X,Y)\in\text{Bypass}_S$ for the graph with a single edge from $X$ to $Y$. Essentially by construction, $\text{Bypass}_S$ admits a presentation as a symmetric monoidal category by:
\begin{itemize}
\item objects $(X,Y)$ for $X,Y\in S$;
\item morphisms $(X,Y)\otimes(Y,Z)\rightarrow(X,Z)$ for $X,Y,Z\in S$;
\item morphisms $\emptyset\rightarrow(X,X)$ for $X\in S$;
\item associative and unital relations.
\end{itemize}

\noindent This presentation encodes the classical axioms of an enriched category. The notation has been set up conveniently so that, if $\mathcal{C}:\text{Bypass}_S\rightarrow\mathcal{V}$ is an enriched category, $\mathcal{C}(X,Y)$ is the object of morphisms from $X$ to $Y$.

In Section 2, we will prove:

\begin{proposition*}[\ref{PropGH}]
Definition \ref{DefEnr} agrees with that of Gepner-Haugseng \cite{GH}.
\end{proposition*}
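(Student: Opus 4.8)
The plan is to exhibit $\text{Bypass}_S$ as a symmetric monoidal envelope and then to match Gepner--Haugseng's combinatorial category with the same underlying operad, so that both definitions become two descriptions of the $\infty$-category of algebras over the $S$-colored associative $\infty$-operad $\text{Ass}_S^{\mathrm{ns}}$: the non-symmetric $\infty$-operad with colors the pairs $(X,Y)\in S\times S$, one multimorphism $((X_0,X_1),\dots,(X_{n-1},X_n))\to(X_0,X_n)$, one nullary multimorphism to each $(X,X)$, and no others.

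First I would recall the Gepner--Haugseng definition. Fixing the object set $S$, they build a generalized non-symmetric $\infty$-operad $\Delta_S^{\mathrm{op}}\to\Delta^{\mathrm{op}}$, and a $\mathcal{V}$-enriched category with objects $S$ is a map of generalized non-symmetric $\infty$-operads $\Delta_S^{\mathrm{op}}\to\mathcal{V}^{\otimes}$ over $\Delta^{\mathrm{op}}$ --- equivalently, a functor $\Delta_S^{\mathrm{op}}\to\mathcal{V}$ satisfying the Segal condition that identifies the value on $([n],(X_0,\dots,X_n))$ with $\mathcal{C}(X_0,X_1)\otimes\cdots\otimes\mathcal{C}(X_{n-1},X_n)$, with the active map $[1]\to[n]$ onto the endpoints supplying composition and $[0]\to[1]$ supplying the units. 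Unwinding this, $\Delta_S^{\mathrm{op}}$ is precisely $\text{Ass}_S^{\mathrm{ns}}$: for $S=\ast$ it is $\Delta^{\mathrm{op}}$, whose algebras are associative monoids, and the $S$-colored case is the evident generalization in which the colors record pairs of objects. This translation --- from Gepner--Haugseng's categorical-pattern packaging to the elementary operad $\text{Ass}_S^{\mathrm{ns}}$, with due care for the unit maps and degeneracies --- is the step I expect to be the main obstacle; everything afterward is formal.

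Next I would use the generators-and-relations presentation of $\text{Bypass}_S$ recalled above. Its objects, the directed multigraphs on $S$, are exactly the objects of the symmetric monoidal envelope $\mathrm{Env}(\text{Ass}_S)$ of the symmetrization $\text{Ass}_S$ of $\text{Ass}_S^{\mathrm{ns}}$ (a multigraph is a finite set of edges over $S\times S$, i.e.\ a word in the colors up to permutation); the disjoint-union tensor product is the envelope's tensor product; and the generating morphisms $(X,Y)\otimes(Y,Z)\to(X,Z)$ and $\emptyset\to(X,X)$, together with the associativity and unitality relations, are exactly the operations and relations of $\text{Ass}_S$. So the presentation identifies $\text{Bypass}_S\simeq\mathrm{Env}(\text{Ass}_S)$ as symmetric monoidal categories.

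Finally I would chain universal properties. By the universal property of the symmetric monoidal envelope, symmetric monoidal functors $\text{Bypass}_S=\mathrm{Env}(\text{Ass}_S)\to\mathcal{V}$ are the same as $\text{Ass}_S$-algebras in $\mathcal{V}$; since $\mathcal{V}$ is symmetric monoidal, restriction along the forgetful functor from symmetric to non-symmetric $\infty$-operads identifies $\text{Ass}_S$-algebras with $\text{Ass}_S^{\mathrm{ns}}$-algebras; and by the first step the latter is exactly a map $\Delta_S^{\mathrm{op}}\to\mathcal{V}^{\otimes}$, i.e.\ a $\mathcal{V}$-enriched category in the sense of Gepner--Haugseng. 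Tracing through, this chain of equivalences is natural in $\mathcal{V}$ and compatible with the assignments $\mathcal{C}\mapsto S$ and $\mathcal{C}\mapsto\mathcal{C}(X,Y)$, which is the asserted agreement of the two definitions.
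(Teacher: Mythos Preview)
Your proposal is correct and follows essentially the same route as the paper: identify Gepner--Haugseng's $\Delta_S^{\mathrm{op}}$ with a nonsymmetric colored operad (the paper calls it $\mathcal{O}_S^\otimes$), pass to its symmetrization using that $\mathcal{V}$ is symmetric monoidal, identify $\text{Bypass}_S$ with the symmetric monoidal envelope of that symmetrization, and invoke the universal property of the envelope. The paper's proof is slightly more terse in the envelope identification step---it observes directly that active morphisms in the symmetrized operad match bypass morphisms, rather than going through the generators-and-relations presentation---but the logical skeleton is the same.
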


\noindent Now suppose we have a graph $\Gamma\in\text{Bypass}_S$. An \emph{Eulerian tour} on $\Gamma$ is a cyclic ordering on the edges of $\Gamma$ so that they form a single cycle. We let $\text{Bypass}_S^\text{Eul}$ denote the category of nonempty graphs in $\text{Bypass}_S$ with specified Eulerian tour, and $\text{Bypass}_S^\text{Eul}\xrightarrow{k}\text{Bypass}_S$ the forgetful functor.

Our first main result generalizes Theorem \ref{ThmCase1} to a universal construction of THH of enriched categories:

\begin{theorem*}[\ref{ThmCase2}]
If $\mathcal{C}:\text{Bypass}_S\rightarrow\mathcal{V}$ is a $\mathcal{V}$-enriched category with set $S$ of objects, $$\text{THH}(\mathcal{C})\cong\mathcal{C}_\ast k_\ast r^\ast(S^1),$$ with maps as in the diagram $$\xymatrix{
\text{Bypass}_S^\text{Eul}\ar[d]_r\ar[r]^k &\text{Bypass}_S\ar[r]^{\mathcal{C}} &\mathcal{V} \\
BS^1. &&
}$$
\end{theorem*}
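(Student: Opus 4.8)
The plan is to reduce Theorem~\ref{ThmCase2} to Theorem~\ref{ThmCase1} by a ``bar construction'' comparison, exhibiting both sides as geometric realizations of matching simplicial objects. First I would unwind the right-hand side $\mathcal{C}_\ast k_\ast r^\ast(S^1)$. Since $\mathcal{C}_\ast$ preserves colimits, it suffices to compute the presheaf $\mathcal{O}_\text{thh}^S := k_\ast r^\ast(S^1)$ on $\text{Bypass}_S$ and then apply $\mathcal{C}_\ast$. The key point is that $r^\ast(S^1)$ is a presheaf on $\text{Bypass}_S^\text{Eul}$, which by definition parametrizes nonempty graphs with an Eulerian tour; such data is the same as a cyclic word in the edges, so $\text{Bypass}_S^\text{Eul}$ should fiber over Connes' category $\Lambda$ (recording the cyclic structure of the tour) in a way compatible with the diagram in Theorem~\ref{ThmCase1}. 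Concretely, I expect a commuting square relating $(r,k)$ on $\text{Bypass}_S^\text{Eul}$ to the $(r,k)$ of the one-object case over $\Lambda \to \text{Ass}$, at least after restricting along the functor $\Lambda \to \text{Bypass}_S^\text{Eul}$ that picks out a chosen object $X_0,\dots,X_n$ and the standard $(n{+}1)$-cycle. Base change along this square then identifies $r^\ast(S^1)$ on $\text{Bypass}_S^\text{Eul}$ with the pullback of the one-object circle presheaf.

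Next I would compute the left Kan extension $k_\ast$ pointwise using the colimit formula: $(k_\ast \mathcal{F})(\Gamma) = \mathrm{colim}_{(\Gamma' \to \Gamma) \in (\text{Bypass}_S^\text{Eul})_{/\Gamma}} \mathcal{F}(\Gamma')$. For $\Gamma$ a single edge $(X,Y)$ this recovers exactly the ``THH in degree appearing'' — morphisms $\Gamma' \to (X,Y)$ in $\text{Bypass}_S$ out of an Euler-toured graph correspond to ways of collapsing a cyclically-ordered chain of composable morphisms, which is the combinatorics underlying $\text{THH}(\mathcal{C})_\bullet$. Applying $\mathcal{C}_\ast$ and using that $\mathcal{C}$ is symmetric monoidal (so $\mathcal{C}(\Gamma)$ is the tensor of edge-objects $\mathrm{Hom}(X_i,X_j)$) turns this colimit into precisely the cyclic bar construction $\coprod_{X_0,\dots,X_n}\mathrm{Hom}(X_0,X_1)\otimes\cdots\otimes\mathrm{Hom}(X_n,X_0)$, whose realization is $\text{THH}(\mathcal{C})$ by Definition~\ref{Def1}. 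The $S^1$-presheaf bookkeeping guarantees that the simplicial structure one gets is the cyclic bar construction (indexed by $\Lambda$ or $\Delta^{op}$ after forgetting) rather than some other diagram.

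To make the last step rigorous I would proceed exactly as in the proof of Theorem~\ref{ThmCase1} (which we may assume), but ``spread out over $S$'': decompose $\text{Bypass}_S$ as built from the one-object PROP $\text{Ass}$ via the generating objects $(X,Y)$ and generating morphisms, observe that $k_\ast r^\ast$ is computed objectwise and is insensitive to the relations since left Kan extension along $k$ and restriction along $r$ are both colimit-preserving, and then assemble. Equivalently, one can present $\text{Bypass}_S^\text{Eul} \to \text{Bypass}_S$ as a coproduct/assembly of copies of $\Lambda \to \text{Ass}$ glued along objects, so that the push--pull on $\text{Bypass}_S$ is determined by the one-object computation by naturality in $S$ and compatibility with the symmetric monoidal structure.

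The main obstacle I anticipate is the bookkeeping in the base-change/comparison square: I must check that the functor $\text{Bypass}_S^\text{Eul} \to \Lambda$ (or the assembly of $\Lambda$'s) is such that the relevant square is \emph{homotopy cartesian} (or that the relevant comma categories are cofinal), so that $r^\ast$ commutes with the transition — this is where the cyclic (as opposed to merely simplicial) structure, and the nonemptiness hypothesis on graphs in $\text{Bypass}_S^\text{Eul}$, genuinely enter. A secondary subtlety is handling the presentability/size issue flagged in the footnote to Definition~\ref{Def1}: if $\mathcal{V}$ is not presentable one works with presheaves on $\mathcal{V}$ throughout, but since all the functors $\mathcal{C}_\ast, k_\ast, r^\ast$ in sight preserve the relevant colimits this does not affect the argument. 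Once the comparison square is in place, everything else is a formal consequence of the universal properties of Kan extension together with Definition~\ref{Def1} and Theorem~\ref{ThmCase1}.
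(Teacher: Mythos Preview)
Your overall strategy---reduce to the one-object case (Theorem~\ref{ThmCase1}) via the forgetful map $\text{Bypass}_S^\text{Eul}\to\Lambda$---is the same as the paper's. But the mechanism you propose for carrying out the reduction does not work, and the paper's actual mechanism is different and cleaner.

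The problem is your base-change square. You want a square
\[
\xymatrix{
\text{Bypass}_S^\text{Eul}\ar[r]^{k}\ar[d]_F & \text{Bypass}_S\ar[d]^G \\
\text{Bypass}_\ast^\text{Eul}\ar[r]^{k} & \text{Bypass}_\ast
}
\]
satisfying $k_\ast F^\ast\cong G^\ast k_\ast$, so that you could pull $\mathcal{O}_\text{thh}^\ast$ back along $G$. This fails: the square is not a pullback of categories. The fiber product consists of graphs on $S$ equipped with an \emph{arbitrary} cyclic ordering of edges, whereas $\text{Bypass}_S^\text{Eul}$ requires the cyclic ordering to form a genuine cycle (consecutive edges composable). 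Concretely, $G^\ast\mathcal{O}_\text{thh}^\ast(\Gamma)$ is a disjoint union of circles indexed by \emph{all} cyclic orderings of the edge set, while $\mathcal{O}_\text{thh}^S(\Gamma)$ is indexed only by Eulerian tours; these differ as soon as $\Gamma$ has more than one vertex. Your alternative suggestion---writing $\text{Bypass}_S^\text{Eul}\to\text{Bypass}_S$ as an assembly of copies of $\Lambda\to\text{Ass}$---fails for the same reason, and the ``functor $\Lambda\to\text{Bypass}_S^\text{Eul}$ picking out chosen $X_0,\dots,X_n$'' is not well-defined (the objects would have to vary with $n$).

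The paper avoids $k_\ast$-base-change entirely. The trick is to lift the cyclic bar construction itself to $\text{Bypass}_S^\text{Eul}$: one defines a cyclic presheaf $\overline{\mathcal{O}}_\text{thh}^S$ on $\text{Bypass}_S^\text{Eul}$ by $(\overline{\mathcal{O}}_\text{thh}^S)_n=\coprod_{X_0,\dots,X_n}[X_0,\dots,X_n,X_0]$, so that $\mathcal{O}_\text{thh}^S=k_\ast\overline{\mathcal{O}}_\text{thh}^S$ tautologically. The entire comparison then takes place \emph{upstairs} on $\text{Bypass}_S^\text{Eul}$: one proves directly that $F^\ast\overline{\mathcal{O}}_\text{thh}^\ast\cong\overline{\mathcal{O}}_\text{thh}^S$ by observing that maps $\Gamma\to[X_0,\dots,X_n,X_0]$ in $\text{Bypass}_S^\text{Eul}$ are ``itineraries'' for the Eulerian tour, and an itinerary is determined by the order of its stops, not their labels. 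Since $r=r_\ast\circ F$ by definition, Theorem~\ref{ThmCase1} gives $\overline{\mathcal{O}}_\text{thh}^S\cong F^\ast r_\ast^\ast(S^1)=r^\ast(S^1)$, and applying $k_\ast$ finishes. No cartesian square or cofinality check is needed.
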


\noindent The functor $r$ is that which exhibits $BS^1$ as the classifying space of $\text{Bypass}_S^\text{Eul}$ (Corollary \ref{CorCS}).

In the case $S=\ast$, then $\text{Bypass}_\ast=\text{Ass}$, because a graph on one vertex can be identified with a finite set (of loops at that vertex). On the other hand, an Eulerian tour is a cyclic ordering, so $\text{Bypass}_\ast^\text{Eul}=\Lambda$, and we recover Theorem \ref{ThmCase1}.

\begin{remark}
The description $\text{THH}(\mathcal{C})\cong\mathcal{C}_\ast k_\ast r^\ast(S^1)$ has a few benefits: Most obviously, it makes explicit the $S^1$-action. It also isolates the cyclic bar construction as $k_\ast r^\ast(S^1)$, divorcing it from the enriched category theory (which is encoded in $\mathcal{C}_\ast$). Formal arguments involving the cyclic bar construction can now be encoded as properties of the presheaf $k_\ast r^\ast(S^1)$.
\end{remark}

\subsection{Calculations}
\noindent If $\mathcal{C}:\text{Bypass}_S\rightarrow\mathcal{V}$ is an enriched category with set $S$ of objects, $\text{THH}(\mathcal{C})$ is a colimit of terms which can be defined in $\text{Bypass}_S$. Therefore, there is a formal colimit in $\text{Bypass}_S$, namely the geometric realization of $$(\mathcal{O}_\text{thh})_n=\coprod_{X_0,\ldots,X_n}(X_0,X_1)\otimes\cdots\otimes(X_n,X_0),$$ for which $\mathcal{C}(\mathcal{O}_\text{thh})\cong\text{THH}(\mathcal{C})$. By an extension of the Yoneda lemma, formal colimits can be identified with presheaves of spaces $\text{Bypass}_S^\text{op}\rightarrow\text{Top}$, so we think of $\mathcal{O}_\text{thh}$ as a presheaf.

Then Theorem \ref{ThmCase2} can be restated $\mathcal{O}_\text{thh}\cong k_\ast r^\ast(S^1)$. This is a useful universal property of THH. However, we can also do more: We can explicitly calculate the presheaf $\mathcal{O}_\text{thh}:\text{Bypass}_S^\text{op}\rightarrow\text{Top}$:

\begin{proposition*}[Corollary \ref{CorOthh}]
We have $$\mathcal{O}_\text{thh}(\Gamma)\cong\left\{\begin{array}{lr}
(S^1)^{\amalg\text{Eul}(\Gamma)}, &\text{if }\Gamma\neq\emptyset \\
S, &\text{if }\Gamma=\emptyset
\end{array}\right.,$$ where the disjoint union of circles is taken over the set $\text{Eul}(\Gamma)$ of Eulerian tours on $\Gamma$, and $S$ is the ambient set of objects.
\end{proposition*}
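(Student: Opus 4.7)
The plan is to apply Theorem \ref{ThmCase2}, which identifies $\mathcal{O}_\text{thh}$ with $k_\ast r^\ast(S^1)$, and to evaluate this left Kan extension pointwise. The standard formula for left Kan extension of presheaves gives
$$\mathcal{O}_\text{thh}(\Gamma)\cong\text{colim}_{(\Gamma\downarrow k)^\text{op}}r^\ast(S^1),$$
where $r^\ast(S^1)$ takes the constant value $S^1$ on objects, with transition maps determined by $r$.

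The key combinatorial observation is that a morphism $\phi\colon\Gamma\to\tilde\Gamma$ in $\text{Bypass}_S$ records a partition of the edges of $\Gamma$ into ordered directed paths, one for each edge of $\tilde\Gamma$ (paths of length zero correspond to added loops). For nonempty $\Gamma$, substituting these paths back into the Eulerian tour on $\tilde\Gamma$ yields an Eulerian tour on $\Gamma$; this defines a functor $p\colon\Gamma\downarrow k\to\text{Eul}(\Gamma)$ to the discrete category of tours. For $\Gamma=\emptyset$, there are no paths to substitute, and instead all loops of $\tilde\Gamma$ must be based at a common vertex $X\in S$, giving $p\colon\emptyset\downarrow k\to S$. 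A naturality check (that substitution of paths commutes with contraction along a morphism in $\text{Bypass}_S^\text{Eul}$) shows that $p$ is actually a functor, so the slice decomposes over the fibers of $p$ and the colimit splits correspondingly.

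For $\Gamma\neq\emptyset$, the object $(\Gamma,E,\text{id}_\Gamma)$ is initial in each fiber $(\Gamma\downarrow k)_E$: given $(\tilde\Gamma,\phi)$ in the fiber, the morphism $\phi$ itself is the unique lift to a morphism $(\Gamma,E)\to(\tilde\Gamma,\tau)$ in $\text{Bypass}_S^\text{Eul}$, because the defining condition $p(\tilde\Gamma,\phi)=E$ is precisely the statement that $\phi$ contracts the tour $E$ to the tour $\tau$. Passing to opposites, this initial object becomes terminal in the indexing category, so the colimit on that fiber is simply $r^\ast(S^1)(\Gamma,E)=S^1$, giving $\mathcal{O}_\text{thh}(\Gamma)\cong(S^1)^{\amalg\text{Eul}(\Gamma)}$.

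For $\Gamma=\emptyset$, the structure morphism $\alpha\colon\emptyset\to\tilde\Gamma$ is uniquely determined by $\tilde\Gamma$, so each fiber $(\emptyset\downarrow k)_X$ is the full subcategory of $\text{Bypass}_S^\text{Eul}$ on loop-graphs based at $X$ — namely $\text{Bypass}_{\{X\}}^\text{Eul}=\Lambda$. Because $r\colon\Lambda\to BS^1$ realizes $BS^1$ as the classifying space (Corollary \ref{CorCS}), one has $\text{colim}_{\Lambda^\text{op}}r^\ast(S^1)\cong\text{colim}_{(BS^1)^\text{op}}S^1=(S^1)_{hS^1}\simeq\ast$, and summing over vertices gives $\mathcal{O}_\text{thh}(\emptyset)\cong S$. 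The main obstacle is the combinatorial verification that $p$ is well-defined on morphisms and that $\phi$ lifts uniquely to $\text{Bypass}_S^\text{Eul}$ in the initial-object argument; both come down to checking that substitution and contraction of paths into an Eulerian tour are mutually inverse on the relevant data.
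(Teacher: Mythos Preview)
Your argument is correct but follows a different route from the paper. The paper splits into two separate computations: for $\Gamma=\emptyset$ it returns to the cyclic bar definition directly (Proposition~\ref{OthhEasy}), observing that $(\mathcal{O}_\text{thh})_\bullet(\emptyset)$ is the constant cyclic set with value $S$; for $\Gamma\neq\emptyset$ it first proves the stronger equivariant statement $(\mathcal{O}_\text{thh}^{+})_{hS^1}\cong\text{Eul}$ (Proposition~\ref{MainThm1}), using that $k_\ast$ and $r^\ast$ preserve colimits together with Corollary~\ref{CorComb}, and then reads off the pointwise value from the fact that an $S^1$-space with discrete homotopy orbits must be a disjoint union of free circles.

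Your approach is more uniform: you evaluate the left Kan extension $k_\ast r^\ast(S^1)$ pointwise via the coslice formula in both cases. Your initial-object argument for $\Gamma\neq\emptyset$ is essentially a direct verification that $k$ is a right fibration (Lemma~\ref{L1}), bypassing the general slice-presheaf equivalence of Corollary~\ref{CorComb}. Your $\Gamma=\emptyset$ case trades the paper's elementary simplicial computation for the fact that $r$ is a localization, so that $r_\ast r^\ast\simeq\text{id}$ and hence $\text{colim}_{\Lambda^\text{op}}r^\ast(S^1)\simeq\text{colim}_{(BS^1)^\text{op}}S^1$; this step is standard but you might make it explicit rather than leaving it implicit in the citation of Corollary~\ref{CorCS}. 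The paper's route has the advantage of establishing the $S^1$-equivariant structure on $\mathcal{O}_\text{thh}^{+}$ along the way, which feeds directly into Theorem~\ref{MainThm}; your route is more self-contained for the corollary as stated and avoids invoking Corollary~\ref{CorComb}.
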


\begin{remark}\label{RmkRes}
Because of the different behavior at $\emptyset$, we will clean up our exposition by restricting away from the empty graph.
\end{remark}

\noindent Let $\mathcal{O}_\text{thh}^{+}$ denote $\mathcal{O}_\text{thh}$ restricted away from the empty graph. This is a presheaf on the full subcategory $\text{Bypass}_S^{+}\subseteq\text{Bypass}_S$ of nonempty graphs.

The last proposition describes $\mathcal{O}_\text{thh}^{+}(\Gamma)$ for each $\Gamma$; however, the restriction maps $\mathcal{O}_\text{thh}^{+}(f)$ for $f:\Gamma\rightarrow\Gamma^\prime$ may be nontrivially twisted.

The following theorem will give a complete description of $\mathcal{O}_\text{thh}^{+}$. If $\text{Eul}(\Gamma)$ denotes the set of Eulerian tours on $\Gamma$, then Eul is a presheaf of sets defined on nonempty graphs $\text{Bypass}_S^{+}$.

\begin{theorem*}[\ref{MainThm}]
If $\mathcal{O}_\text{thh}^{+}$ is as above, then:
\begin{enumerate}
\item $\mathcal{O}_\text{thh}^{+}$ has a canonical $S^1$-action;
\item $(\mathcal{O}_\text{thh}^{+})_{hS^1}\cong\text{Eul}$;
\item The moduli space of presheaves satisfying (1)-(2) is $\mathbb{Z}\times BS^1$; that is, such presheaves are determined up to equivalence by an integer invariant we call \emph{degree};
\item The degree of $\mathcal{O}^{+}_\text{thh}$ is $1$.
\end{enumerate}
\end{theorem*}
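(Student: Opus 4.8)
The plan is to leverage the universal description $\mathcal{O}_\text{thh}^{+}\cong k_\ast r^\ast(S^1)$ of Theorem \ref{ThmCase2}, restricted away from $\emptyset$, together with the observation that the free $S^1$-space, regarded as an object of $\mathcal{P}(BS^1)$, is the universal $\underline{S^1}$-torsor over the terminal presheaf (equivalently, it has automorphism group $S^1$), and that the composite $k_\ast r^\ast\colon\mathcal{P}(BS^1)\to\mathcal{P}(\text{Bypass}_S^{+})$ preserves all small colimits and the terminal object (the former since $r^\ast$ is a precomposition, hence has both adjoints, and $k_\ast$ is a left Kan extension; the latter since $r^\ast$ is also a right adjoint). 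For (1): applying $k_\ast r^\ast$ to the map $BS^1\to\mathcal{P}(BS^1)$ classifying the $S^1$-action on the free $S^1$-space produces a map $BS^1\to\mathcal{P}(\text{Bypass}_S^{+})$ classifying an $S^1$-action on $\mathcal{O}_\text{thh}^{+}$. For (2): since $k_\ast r^\ast$ preserves colimits it preserves homotopy orbits, so $(\mathcal{O}_\text{thh}^{+})_{hS^1}\cong k_\ast r^\ast((S^1)_{hS^1})\cong k_\ast r^\ast(\ast)\cong k_\ast(\ast)$, and it remains to identify $k_\ast(\ast)$ with $\text{Eul}$. For this I would observe that $k\colon\text{Bypass}_S^\text{Eul}\to\text{Bypass}_S^{+}$ is a right fibration fibered in sets --- it is (equivalent to) the category of elements of the set-valued presheaf $\text{Eul}$, because an elementary bypass move lifts uniquely through a choice of Eulerian tour on its target (expand the bypassed edge back into its path, or delete the newly added loop). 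For such a $k$ one has $\mathcal{P}(\text{Bypass}_S^\text{Eul})\simeq\mathcal{P}(\text{Bypass}_S^{+})_{/\text{Eul}}$ with $k_\ast$ corresponding to the forgetful functor and the terminal presheaf corresponding to $\mathrm{id}_\text{Eul}$; hence $k_\ast(\ast)\cong\text{Eul}$.

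For (3), the point is that conditions (1)--(2) force such a $\mathcal{G}$ to be an $\underline{S^1}$-torsor over $\text{Eul}$ in the $\infty$-topos $\mathcal{P}(\text{Bypass}_S^{+})$, which can be checked pointwise: if $Y$ is an $S^1$-space whose homotopy orbits $Y_{hS^1}$ form a discrete set $E$, then $Y$ splits $S^1$-equivariantly as $\coprod_{e\in E}Y_e$ with each $(Y_e)_{hS^1}\simeq\ast$, and the fibration sequence $Y_e\to(Y_e)_{hS^1}\to BS^1$ forces $Y_e\simeq S^1$ with its free action. Consequently the moduli space of presheaves satisfying (1)--(2) is the space of $\underline{S^1}$-torsors over $\text{Eul}$, namely $\mathrm{Map}_{\mathcal{P}(\text{Bypass}_S^{+})}(\text{Eul},B\underline{S^1})\simeq\mathrm{Map}_{\mathcal{P}(\text{Bypass}_S^{+})}(\text{Eul},\underline{BS^1})\simeq\mathrm{Map}_\text{Top}(|\text{Eul}|,BS^1)$, the last step by the adjunction between colimit and the constant-presheaf functor. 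Now $|\text{Eul}| = \mathrm{colim}\,\text{Eul}\cong\mathrm{colim}\,k_\ast(\ast)\cong\mathrm{colim}_{\text{Bypass}_S^\text{Eul}}(\ast) = |\text{Bypass}_S^\text{Eul}|\simeq BS^1$ by Corollary \ref{CorCS} (equivalently, $|\text{Eul}|$ is the classifying space of the category of elements $\text{Bypass}_S^\text{Eul}$). Thus the moduli space is $\mathrm{Map}(BS^1,BS^1)\simeq\mathrm{Map}(\mathbb{CP}^\infty,\mathbb{CP}^\infty)$, which is $\mathbb{Z}\times BS^1$ by a standard computation with $K(\mathbb{Z},2)$: its components are indexed by $[BS^1,BS^1] = H^2(BS^1;\mathbb{Z})\cong\mathbb{Z}$ and each is of the form $K(\mathbb{Z},2)\simeq BS^1$ (since $H^1(BS^1)=0$). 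The integer invariant of (3), the \emph{degree}, is this component index.

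For (4), trace the classifying map through the construction: $r^\ast(S^1)$ is the pullback along $r$ of the universal $\underline{S^1}$-torsor over the terminal object of $\mathcal{P}(BS^1)$, classified by $\mathrm{id}\colon BS^1\to BS^1$; applying $k_\ast$ does not change realizations, so the classifying map $\text{Eul}\to B\underline{S^1}$ of $\mathcal{O}_\text{thh}^{+}$ realizes to $|\text{Eul}| = |\text{Bypass}_S^\text{Eul}|\xrightarrow{|r|}BS^1$, and $|r|$ is an equivalence by Corollary \ref{CorCS}; so the degree is $\pm1$, and $+1$ after fixing the orientation conventions. The main obstacle I anticipate is step (2): showing carefully that $k$ is a right fibration fibered in sets requires checking that the unique Eulerian lifts of elementary bypass moves are compatible with the defining relations of $\text{Bypass}_S$ --- the same combinatorial bookkeeping that underlies Corollary \ref{CorOthh} --- and a secondary subtlety is pinning the degree in (4) to $+1$ rather than $-1$, since the left/right-translation action on $S^1$, the direction of $r$, and the opposite-category conventions in $\mathcal{P}(BS^1)$ all enter and must be handled consistently.
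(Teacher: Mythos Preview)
Your proposal is correct and follows essentially the same route as the paper: parts (1)--(2) via Theorem~\ref{ThmCase2} together with $k_\ast(\ast)\cong\text{Eul}$ (your right-fibration argument is exactly Lemma~\ref{L1} and Corollary~\ref{CorComb}), part (3) via $|\text{Bypass}_S^\text{Eul}|\cong BS^1$ and $\text{Map}(BS^1,BS^1)$, and part (4) by reading off the degree from $k_\ast r^\ast(S^1)$. One small slip: in your opening sentence you assert that $k_\ast r^\ast$ preserves the terminal object, which is false (and indeed you immediately compute $k_\ast r^\ast(\ast)\cong\text{Eul}\neq\ast$); only $r^\ast$ preserves the terminal, and that is all you use.
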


\noindent Notice that the last proposition (Corollary \ref{CorOthh}) follows from the theorem. Indeed, any $\mathcal{O}$ satisfying (1)-(2) evaluates on $\Gamma\neq\emptyset$ by $$\mathcal{O}(\Gamma)\cong(S^1)^{\amalg\text{Eul}(\Gamma)}.$$ Property (4) describes the twisting of the sheaf $\mathcal{O}_\text{thh}$.

In general, write $\text{Eul}(n)$ for the $S^1$-bundle over Eul of degree $n$.

\begin{corollary}
If $\mathcal{C}$ is $\mathcal{V}$-enriched, then $$\mathcal{C}_\ast(\text{Eul}(\pm 1))\cong\text{THH}(\mathcal{C}).$$ We can also identify $\mathcal{C}_\ast(\text{Eul}(n))$ for all $n$: $$\mathcal{C}_\ast(\text{Eul}(\pm n))\cong\text{THH}(\mathcal{C})_{hC_n},$$ $$\mathcal{C}_\ast(\text{Eul}(0))\cong S^1\otimes\text{THH}(\mathcal{C})_{hS^1},$$ $$\mathcal{C}_\ast(\text{Eul})\cong\text{THH}(\mathcal{C})_{hS^1}.$$
\end{corollary}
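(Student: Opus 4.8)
The plan is to deduce the corollary formally from Theorems \ref{ThmCase2} and \ref{MainThm}, using only that $\mathcal{C}_\ast$ preserves colimits and that it depends only on the underlying (non-equivariant) presheaf. Write $\mathcal{O}=\mathcal{O}^{+}_\text{thh}$. By Theorem \ref{ThmCase2} in the form $\text{THH}(\mathcal{C})\cong\mathcal{C}_\ast\mathcal{O}$ (cf. Remark \ref{RmkRes}), together with Theorem \ref{MainThm}, we have $\mathcal{O}\cong\text{Eul}(1)$, and hence $\text{THH}(\mathcal{C})\cong\mathcal{C}_\ast\text{Eul}(1)$. Since $\text{THH}(\mathcal{C})_{hC_1}=\text{THH}(\mathcal{C})$, this already proves the $+1$ case of both of the first two assertions.

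Next I would record two formal observations. First, for any presheaf $\mathcal{O}'$ carrying an $S^1$-action and any closed subgroup $G\subseteq S^1$, the functor $\mathcal{C}_\ast$ commutes with the colimit $(-)_{hG}=\operatorname{colim}_{BG}$, so $\mathcal{C}_\ast(\mathcal{O}'_{hG})\cong(\mathcal{C}_\ast\mathcal{O}')_{hG}$; applied to the $S^1$-equivariant presheaf $\mathcal{O}$, whose image under $\mathcal{C}_\ast$ is $\text{THH}(\mathcal{C})$ with its circle action (Theorem \ref{MainThm}(1)), this gives $\mathcal{C}_\ast(\mathcal{O}_{hG})\cong\text{THH}(\mathcal{C})_{hG}$. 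Second, $\text{Eul}(-n)$ is obtained from $\text{Eul}(n)$ by precomposing its $S^1$-action with the inversion automorphism of $S^1$ — this is the action of inversion on the moduli space $\mathbb{Z}\times BS^1$ of Theorem \ref{MainThm}(3), which negates the $\mathbb{Z}$-coordinate — so $\text{Eul}(-n)$ and $\text{Eul}(n)$ have the same underlying presheaf, and $\mathcal{C}_\ast\text{Eul}(-n)\cong\mathcal{C}_\ast\text{Eul}(n)$. Thus it suffices to treat $n\geq 0$.

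Now for $n\geq 1$: $\mathcal{O}$ is a principal $S^1$-bundle over $\text{Eul}$ (by Theorem \ref{MainThm}(1)--(2) together with the explicit fibers $(S^1)^{\amalg\text{Eul}(\Gamma)}$ of Corollary \ref{CorOthh}), so $\mathcal{O}_{hC_n}$ is a principal $(S^1/C_n)$-bundle over $\text{Eul}$; identifying $S^1/C_n\cong S^1$ via $g\mapsto g^n$ makes it a presheaf satisfying Theorem \ref{MainThm}(1)--(2), hence $\mathcal{O}_{hC_n}\cong\text{Eul}(m)$ for some $m$. The point is that $m=\pm n$: the classifying map $\text{Eul}\to BS^1$ of $\mathcal{O}_{hC_n}$ is the composite of that of $\mathcal{O}$ (which has degree $1$) with $B(z\mapsto z^n)\colon BS^1\to BS^1$, and this operation multiplies the degree of Theorem \ref{MainThm}(3) by $n$. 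Combining with the two observations above, $\mathcal{C}_\ast\text{Eul}(\pm n)\cong\mathcal{C}_\ast(\mathcal{O}_{hC_n})\cong\text{THH}(\mathcal{C})_{hC_n}$, which recovers the first assertion as the case $n=1$. For the remaining two assertions: $\text{Eul}\cong\mathcal{O}_{hS^1}$ by Theorem \ref{MainThm}(2), so $\mathcal{C}_\ast\text{Eul}\cong(\mathcal{C}_\ast\mathcal{O})_{hS^1}\cong\text{THH}(\mathcal{C})_{hS^1}$ by the first observation with $G=S^1$; and $\text{Eul}(0)$ is the trivial principal $S^1$-bundle $\text{Eul}\times S^1$, so, since $-\times S^1$ and $\mathcal{C}_\ast$ preserve colimits, $\mathcal{C}_\ast(\text{Eul}\times S^1)\cong S^1\otimes\mathcal{C}_\ast\text{Eul}\cong S^1\otimes\text{THH}(\mathcal{C})_{hS^1}$.

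The main obstacle is the degree computation $m=\pm n$ in the third paragraph: this is exactly the point where the subgroup $C_n$ is produced, and it requires the concrete description of the degree invariant from Theorem \ref{MainThm}(3) — enough to see that pulling an $S^1$-bundle back along $B(z\mapsto z^n)$, equivalently passing to the fibrewise $n$-fold cover on the circle-fibers of Corollary \ref{CorOthh}, multiplies the degree by $n$. Granting that, everything else is a mechanical consequence of $\mathcal{C}_\ast$ being colimit-preserving and insensitive to equivariant structure.
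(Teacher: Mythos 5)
Your proof is correct and reaches the same conclusion, but it is organized somewhat differently from the paper's. The paper works ``upstream'' in $\text{Top}^{S^1}$: starting from $\text{Eul}(n)\cong k_\ast r^\ast(S^1_{(n)})$, it applies the colimit-preserving composite $\mathcal{C}_\ast k_\ast r^\ast$ to the three identities $\ast\cong(S^1_{(1)})_{hS^1}$, $S^1_{(n)}\cong(S^1_{(1)})_{hC_n}$, and $S^1_{(0)}\cong S^1\otimes\ast$, and all four claimed equivalences drop out at once. You instead work ``downstream'' in $\mathcal{P}(\text{Bypass}_S^{+})$, identifying $\mathcal{O}_{hC_n}$ as a principal bundle over $\text{Eul}$ and computing its degree via the classifying-map factorization through $B(z\mapsto z^n)$. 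These are genuinely the same computation — your degree calculation for $\mathcal{O}_{hC_n}$ is precisely a restatement of $S^1_{(n)}\cong(S^1_{(1)})_{hC_n}$ — but the paper's framing avoids any appeal to the moduli-space description and to the sign discussion around $\text{Eul}(\pm n)$, which makes it a bit more economical. Your version has the mild virtue of making explicit which properties of $\mathcal{C}_\ast$ are being used (colimit preservation, insensitivity to equivariant structure) and of isolating the degree calculation as the single nontrivial step. One small stylistic point: where you write that ``$-\times S^1$ preserves colimits,'' what you actually use is that $S^1\otimes(-)=\operatorname{colim}_{S^1}(-)$ is itself a colimit, so any colimit-preserving functor commutes with it; the phrasing in terms of preserving colimits, while not wrong, slightly obscures this.
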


\noindent Theorem \ref{MainThm} is the most technical part of the paper. It relies on a key combinatorial lemma that we sketch now.

Namely, the following data is equivalent:
\begin{itemize}
\item a graph $\Gamma\in\text{Bypass}_S$ with a chosen Eulerian tour;
\item a cyclically ordered set of edges $E\in\Lambda$ with a labeling of its vertices in $S$.
\end{itemize}

\noindent Categorically, this combinatorial argument implies that $\text{Bypass}_S^\text{Eul}$ is both the (right fibrational) Grothendieck construction applied to the presheaf Eul on $\text{Bypass}_S$, as well as the (left fibrational) Grothendieck construction applied to a certain presheaf on $\Lambda$.

In the case $S=\ast$, this recovers the observation $\text{Bypass}_\ast^\text{Eul}\cong\Lambda$.

We will use this combinatorial argument to prove that $\text{Bypass}_S^\text{Eul}$ has classifying space $BS^1$ (Corollary \ref{CorCS}), and to pass back and forth between presheaves on $\text{Bypass}_S$ and presheaves on $\text{Bypass}_S^\text{Eul}$ (Corollary \ref{CorComb}). This is the technical material necessary to prove Theorem \ref{MainThm}(3), and the rest follows by Theorem \ref{ThmCase2}.

\subsection{Organization}
\noindent Section 2 concerns enriched higher categories. We want to emphasize that our construction is relatively elementary and model-independent. For that reason, this section is written for a wide audience, including those who may not be interested in THH.

In Section 3, we review the cyclic category $\Lambda$. The results are not new, but there is one crucial idea (Lemma \ref{LemLambda}): There are canonical equivalences $\Lambda\cong\text{Bypass}_\ast^\text{Eul}\cong\Lambda^\text{op}$. Throughout the paper, we will frequently make the second identification $\text{Bypass}_\ast^\text{Eul}\cong\Lambda^\text{op}$.

In the very short Section 4, we define THH of an enriched category via the cyclic bar construction. The cyclic bar construction itself is formally encoded by a presheaf $\mathcal{O}_\text{thh}$ on $\text{Bypass}_S$.

Section 5 contains the proof of the first main result (Theorem \ref{ThmCase1}), which provides a universal construction for THH of associative algebras.

Although the cyclic bar construction of $\text{THH}(\mathcal{C})$ makes reference to the set of objects of $\mathcal{C}$, THH does not actually depend in any meaningful way on the object-set. In Section 6, we prove a weak version of this statement, and use it to generalize our construction of THH to the case of enriched $\infty$-categories.

The final two sections are the calculation, Theorem \ref{MainThm}. For this, we need the combinatorial counting-in-two-ways argument described above, which is in Section 7, followed by the proof of Theorem \ref{MainThm} and its corollaries in Section 8.

\subsection{Acknowledgments}
\noindent This paper benefited from conversations with Rok Gregoric, Aaron Mazel-Gee, Sam Raskin, Chris Schommer-Pries, and Jay Shah. Special thanks to Andrew Blumberg for his regular feedback and interest in this project.

\subsection{Not included in this paper}
\noindent Everything in this paper is done for enriched categories with fixed sets of objects. That is, we define THH as a functor on $\text{Cat}^\mathcal{V}_S$, the $\infty$-category of $\mathcal{V}$-enriched categories with set $S$ of objects (and functors between them which act as the identity on objects).

In fact, THH is functorial on $\text{Cat}^\mathcal{V}$ (the $\infty$-category of small $\mathcal{V}$-enriched categories). The author has chosen not to include a discussion of the functoriality of THH because it would lengthen the paper and distract from the main results.

We hope to include these details in a sequel on cyclotomic structures. Until then, the interested reader can derive the functoriality of THH from the two properties:
\begin{itemize}
\item If $f:S\rightarrow T$ is a function inducing $F:\text{Bypass}_S\rightarrow\text{Bypass}_T$, then $F_\ast\mathcal{O}_\text{thh}\cong\mathcal{O}_\text{thh}$. Hence, if $\mathcal{C}\rightarrow\mathcal{D}$ is fully faithful, there is a functor $\text{THH}(\mathcal{C})\rightarrow\text{THH}(\mathcal{D})$.
\item If $f$ is surjective, then also $F^\ast\mathcal{O}_\text{thh}\cong\mathcal{O}_\text{thh}$. Hence, if $\mathcal{C}\rightarrow\mathcal{D}$ is also essentially surjective, then $\text{THH}(\mathcal{C})\rightarrow\text{THH}(\mathcal{D})$ is an equivalence.
\end{itemize}

\subsection{Notation}\label{SNot}
\noindent We use $\infty$-categorical language throughout, writing Top for the $\infty$-category of spaces, or homotopy types, and $\mathcal{P}(\mathcal{C})=\text{Fun}(\mathcal{C}^\text{op},\text{Top})$ for presheaves on a small $\infty$-category $\mathcal{C}$.

Via the Yoneda embedding $\mathcal{C}\subseteq\mathcal{P}(\mathcal{C})$, we will identify $\mathcal{C}$ with the $\infty$-category of representable presheaves. That is, we will use the same symbol to refer to an object $X\in\mathcal{C}$ or the associated representable presheaf $X\in\mathcal{P}(\mathcal{C})$.

By \cite{HTT} 5.1.5.6, $\mathcal{P}(\mathcal{C})$ can also be identified with the $\infty$-category of \emph{formal colimits} in $\mathcal{C}$. Hence, if $F:\mathcal{C}\rightarrow\mathcal{D}$ is a functor, $\mathcal{C}$ is a small $\infty$-category, and $\mathcal{D}$ is a presentable $\infty$-category, then there is an essentially unique extension $$F_\ast:\mathcal{P}(\mathcal{C})\rightarrow\mathcal{D}$$ which preserves colimits and restricts to $F$ on representables. Also, $F_\ast$ has a right adjoint $F^\ast$ by the adjoint functor theorem \cite{HTT} 5.5.2.9.

Here is a special case: If $F:\mathcal{C}\rightarrow\mathcal{D}$ is a functor between small $\infty$-categories, we embed $\mathcal{D}\subseteq\mathcal{P}(\mathcal{D})$, which is presentable, so we can apply the above construction to produce an adjoint pair $$F_\ast:\mathcal{P}(\mathcal{C})\leftrightarrows\mathcal{P}(\mathcal{D}):F^\ast.$$ As before, the left adjoint $F_\ast$ is the unique functor preserving colimits which restricts to $F$ on representables.

The right adjoint $F^\ast$ is precomposition of a presheaf $\mathcal{D}^\text{op}\rightarrow\text{Top}$ by $F:\mathcal{C}^\text{op}\rightarrow\mathcal{D}^\text{op}$.

\begin{example}\label{ExColim}
For any $\infty$-category $\mathcal{C}$, let $c:\mathcal{C}\rightarrow\ast$ denote the trivial functor. Then $c^\ast:\text{Top}\rightarrow\mathcal{P}(\mathcal{C})$ sends a space $X$ to the constant presheaf at $X$. Hence its left adjoint is $$c_\ast(\mathcal{O})\cong\text{colim}_{\mathcal{C}^\text{op}}(\mathcal{O}).$$
\end{example}

\begin{example}
If $\mathcal{C}$ is an $\infty$-category, it has a classifying space $|\mathcal{C}|$, which has the following universal property: If $X$ is an $\infty$-groupoid, any functor $\mathcal{C}\rightarrow X$ factors uniquely through $|\mathcal{C}|$.

If $\mathcal{C}$ is an ordinary category, $|\mathcal{C}|$ is classically the geometric realization of the nerve of $\mathcal{C}$.

By \cite{HTT} 3.3.4.6, $|\mathcal{C}|\cong c_\ast(\ast)$, where $c:\mathcal{C}\rightarrow\ast$ as before, and $\ast$ is the trivial constant presheaf on $\mathcal{C}$.
\end{example}

\section{Enriched categories}
\noindent In this section, we give a combinatorial description of enriched categories as symmetric monoidal functors $\text{Bypass}_S\rightarrow\mathcal{V}$, where $\text{Bypass}_S$ is a category of graphs and bypass operations. Specifically:

\begin{definition}
A \emph{directed multigraph} (or just \emph{graph}) $\Gamma$ consists of a set $E$ of edges, a set $S$ of vertices, and two functions $s,t:E\rightarrow S$ called the source and target. We say $\Gamma$ is \emph{finite} if $E$ is finite (even if $S$ is not).

Given two directed multigraphs $\Gamma,\Gamma^\prime$ with identical sets of vertices, a \emph{bypass operation} $f:\Gamma\rightarrow\Gamma^\prime$ is a function $f:E\rightarrow E^\prime$ along with a total ordering of the set $f^{-1}(e)$ for each $e\in E^\prime$, satisfying the properties:
\begin{itemize}
\item If $f^{-1}(e)$ is empty, then $e$ is a \emph{loop}; that is, $s(e)=t(e)$;
\item If $f^{-1}(e)=\{e_1<\cdots<e_k\}$, then $e_1\to\cdots\to e_k$ is a \emph{path} from $s(e)$ to $t(e)$; that is, $t(e_i)=s(e_{i+1})$ for $1\leq i<k$, $s(e_1)=s(e)$, and $t(e_k)=t(e)$.
\end{itemize}
\end{definition}

\noindent We think of a graph $\Gamma$ as a set $S$ of vertices and a set $E$ of edges, such that each $e\in E$ is a directed edge from $s(e)$ to $t(e)$. A bypass operation is a series of operations of the following forms (corresponding respectively to the two properties above) which transform $\Gamma$ into $\Gamma^\prime$:
\begin{itemize}
\item Add a loop (an edge from a vertex to itself);
\item Given edges which form a path $e_1\to\cdots\to e_n$, replace all of them by a single edge from $s(e_1)$ to $t(e_n)$.
\end{itemize}

\noindent The second operation is the origin of our term \emph{bypass}.

\begin{definition}
Given a set $S$ (not necessarily finite), $\text{Bypass}_S$ is the category of \emph{finite} directed multigraphs with fixed object set $S$, and bypass operations for morphisms.
\end{definition}

\noindent Given two directed multigraphs $\Gamma,\Gamma^\prime$ on a fixed set $S$ of vertices, denote by $\Gamma\otimes\Gamma^\prime$ the graph with edge set $E\amalg E^\prime$ and the induced source and target maps. Then $\otimes$ is a symmetric monoidal operation on $\text{Bypass}_S$. The unit of the symmetric monoidal structure is the empty graph $\emptyset$ with no edges.

We will introduce notation for a few special graphs that will be important in all that follows:

\begin{definition}
If $X_i\in S$, then $(X_1,X_2)\in\text{Bypass}_S$ denotes the graph with a single edge from $X_1$ to $X_2$. More generally, $$(X_1,X_2,\ldots,X_n)=(X_1,X_2)\otimes(X_2,X_3)\otimes\cdots\otimes(X_{n-1},X_n)$$ denotes the graph with a single path $X_1\to\cdots\to X_n$.
\end{definition}

\noindent As a symmetric monoidal category, $\text{Bypass}_S^\otimes$ admits a presentation by:
\begin{itemize}
\item objects $(X,Y)$,
\item morphisms $(X,Y,Z)=(X,Y)\otimes(Y,Z)\rightarrow(X,Z)$;
\item morphisms $\emptyset\rightarrow(X,X)$;
\item relations (commuting diagrams) $$\xymatrix{
(a,b,c,d)\ar[r]\ar[d] &(a,b,d)\ar[d] &(a,b,b)\ar[r] &(a,b) &(a,a,b)\ar[l] \\
(a,c,d)\ar[r] &(a,d) &(a,b)\ar[u]\ar[ru] &&(a,b)\ar[u]\ar[lu]
}$$
\end{itemize}

\noindent Notice that this presentation corresponds exactly to the axioms of an enriched category. Hence, the following definition agrees with the usual one when $\mathcal{V}$ is an ordinary category:

\begin{definition}\label{DefEnr}
If $\mathcal{V}$ is a symmetric monoidal $\infty$-category, a $\mathcal{V}$-enriched category with set $S$ of objects is a symmetric monoidal functor $$\mathcal{C}:\text{Bypass}_S\rightarrow\mathcal{V}.$$
\end{definition}

\begin{example}
If $S=\ast$ is a singleton, then $\text{Bypass}_{\ast}$ is the category of finite sets, whose morphisms come with prescribed total orderings of the fibers. This is the symmetric monoidal envelope of the associative operad (\cite{HA} 4.1.1.1), so symmetric monoidal functors $\text{Bypass}_{\ast}\rightarrow\mathcal{V}$ may be identified with associative algebras in $\mathcal{V}$.

This is consistent with the principle that associative algebras are enriched categories with one object.
\end{example}

\begin{remark}
The notation has been set up conveniently so that when the functor $\mathcal{C}$ is evaluated at $(X,Y)\in\text{Bypass}_S$, we get $\mathcal{C}(X,Y)\cong\text{Map}(X,Y)$. If 1 denotes the unit in $\mathcal{V}$, we also have $$\mathcal{C}(\emptyset)\cong 1,$$ $$\mathcal{C}(X_1,\ldots,X_n)\cong\mathcal{C}(X_1,X_2)\otimes\cdots\otimes\mathcal{C}(X_{n-1},X_n).$$
\end{remark}

\noindent We end this section with a comparison to the definition of Gepner-Haugseng:

\begin{proposition}\label{PropGH}
Definition \ref{DefEnr} agrees with that of Gepner-Haugseng \cite{GH}.
\end{proposition}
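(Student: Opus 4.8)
The plan is to set up a comparison functor between the two definitions of enriched $\infty$-category and show it is an equivalence. Recall that Gepner--Haugseng build, for a fixed symmetric monoidal $\infty$-category $\mathcal{V}$ and set $S$, a category whose objects over the ``arity'' simplicial/cyclic-like structure encode exactly mapping objects and composition; more precisely, a $\mathcal{V}$-enriched category with object set $S$ is a certain kind of algebra (a ``categorical algebra'') for the generalized operad built from the fibers $\Delta^{op}_{/S}$, or equivalently a suitably co-Cartesian section of a functor to $\mathcal{V}$. I would first recall (or cite from \cite{GH}) that such a structure is equivalently described by the family of spaces/objects $\mathcal{C}(X,Y)$ together with coherently associative and unital composition maps $\mathcal{C}(X,Y)\otimes\mathcal{C}(Y,Z)\to\mathcal{C}(X,Z)$ and units $\mathbbm{1}\to\mathcal{C}(X,X)$ --- i.e.\ Gepner--Haugseng show their definition admits exactly this ``algebra over the composition pattern'' description. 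So the real content is to identify $\mathrm{Bypass}_S$, as a symmetric monoidal $\infty$-category, with the universal recipient of such data.

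\medskip
\noindent The second and main step is to make precise the claim, asserted informally in the excerpt, that $\mathrm{Bypass}_S$ admits the stated presentation as a symmetric monoidal ($1$-)category: generators $(X,Y)$, composition morphisms $(X,Y,Z)\to(X,Z)$, unit morphisms $\emptyset\to(X,X)$, subject to the associativity and left/right unitality relations drawn above. Given such a presentation, the universal property of a symmetric monoidal category presented by generators and relations gives, for any symmetric monoidal $\infty$-category $\mathcal{V}$, an equivalence between $\mathrm{Fun}^{\otimes}(\mathrm{Bypass}_S,\mathcal{V})$ and the $\infty$-category of $\mathcal{V}$-valued solutions to the generators-and-relations data --- which is precisely a system of mapping objects, composition, and units satisfying associativity and unitality up to coherent homotopy. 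Matching this against the Gepner--Haugseng description from Step~1 finishes the proof. To establish the presentation itself I would exhibit $\mathrm{Bypass}_S$ as the result of freely adjoining the generating morphisms to the free symmetric monoidal category on the objects $(X,Y)$ and then imposing the relations, checking by hand that a general morphism in $\mathrm{Bypass}_S$ --- a bypass operation $f\colon E\to E'$ with ordered fibers --- decomposes into a composite of the generators (a loop-addition generator for each empty fiber, a length-$k$ composition for each fiber of size $k$, reassembled by the monoidal structure and symmetries), and that two such decompositions are identified exactly under the associativity/unitality relations. This is essentially a normal-form argument: every bypass operation has a canonical factorization as ``first insert all new loops, then perform all path-contractions,'' and the relations are exactly what is needed to move between factorizations.

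\medskip
\noindent I expect the main obstacle to be the $\infty$-categorical bookkeeping in Step~2 rather than the combinatorics: $\mathrm{Bypass}_S$ is an ordinary $1$-category, but to conclude that symmetric monoidal functors out of it into an $\infty$-category $\mathcal{V}$ are computed by the naive generators-and-relations prescription one must know that the relevant presentation is a presentation in the $\infty$-categorical sense (i.e.\ that no higher cells are needed), or equivalently that $\mathrm{Bypass}_S$ is a localization of a free symmetric monoidal $\infty$-category on a $1$-categorical generating graph with $1$-categorical relations, with no further higher coherences. One clean way around this is to avoid re-deriving the presentation from scratch and instead compare to an existing presentation in the literature --- e.g.\ realize $\mathrm{Bypass}_S$ directly as (the symmetric monoidal envelope of) the generalized $\infty$-operad/composition-pattern that Gepner--Haugseng already use, by producing a symmetric monoidal functor in each direction and checking they are mutually inverse on objects and on the generating morphisms; essential surjectivity and full faithfulness then both reduce to the normal-form statement. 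The case $S=\ast$, where $\mathrm{Bypass}_\ast$ is the associative PROP and the claim is \cite{HA} 4.1.1.1, is a useful sanity check and model for the general argument.
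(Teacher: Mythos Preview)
Your proposal is essentially correct, and you even identify the paper's actual route as your ``clean way around.'' The paper does \emph{not} try to verify the generators-and-relations presentation of $\text{Bypass}_S$ as an $\infty$-categorical presentation; instead it goes directly via the symmetric monoidal envelope. Concretely: Gepner--Haugseng's $\text{Cat}^\mathcal{V}_S$ is $\text{Alg}_{\mathcal{O}_S^\otimes}(\mathcal{V})$ for a nonsymmetric operad $\mathcal{O}_S^\otimes$; since $\mathcal{V}$ is symmetric monoidal this coincides with algebras over the symmetrization $\bar{\mathcal{O}}_S^\otimes$; and then one checks by hand (both sides are $1$-categories, so this is elementary) that $\text{Env}(\bar{\mathcal{O}}_S^\otimes)\cong\text{Bypass}_S$ as symmetric monoidal categories. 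The universal property of the envelope (\cite{HA} 2.2.4.9) finishes.

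The advantage of the paper's route is exactly what you anticipated: it completely avoids the question of whether the $1$-categorical presentation of $\text{Bypass}_S$ is also an $\infty$-categorical presentation, because the envelope comparison is a comparison of honest $1$-categories and the universal property of the envelope is already an $\infty$-categorical statement. Your primary approach via a normal-form argument for bypass operations would work, but requires either justifying that no higher coherence cells are needed in the presentation, or reducing to an existing presentation anyway --- at which point you are back to the envelope argument. So your instinct in the final paragraph was the right one; promote it from fallback to main argument.
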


\begin{proof}
Gepner and Haugseng (\cite{GH} 2.2.17) define $\text{Cat}^\mathcal{V}_S=\text{Alg}_{\mathcal{O}_S^\otimes}(\mathcal{V})$, where $\mathcal{O}_S^\otimes$ is a nonsymmetric $\infty$-operad defined in \cite{GH} 2.1. For them, $\mathcal{V}$ is just monoidal, but for us it is even \emph{symmetric} monoidal. Hence $\text{Cat}^\mathcal{V}_S\cong\text{Alg}_{\bar{\mathcal{O}}_S^\otimes}(\mathcal{V})$, where $\bar{\mathcal{O}}_S^\otimes$ is the \emph{symmetrization} of $\mathcal{O}_S^\otimes$ (\cite{GH} Definition 3.7.6).

Unpacking definitions, the colors of $\bar{\mathcal{O}}_S^\otimes$ are symbols $(X,Y)$ for $X,Y\in S$, and active morphisms (multilinear morphisms) from the unordered tuple $((X_1,Y_1),\ldots,(X_n,Y_n))$ to $(A,B)$ are canonically in bijection with $\text{Bypass}_S$-morphisms of the form $$(X_1,Y_1)\otimes\cdots\otimes(X_n,Y_n)\rightarrow(A,B).$$ The symmetric monoidal envelope $\text{Env}(\bar{\mathcal{O}}_S^\otimes)$ is the subcategory of $\bar{\mathcal{O}}_S^\otimes$ spanned by just the active morphisms (\cite{HA} 2.2.4). Because objects of $\text{Bypass}_S$ can be written in a unique way as tensor products of the elementary objects $(X,Y)$, unpacking definitions shows that $\text{Env}(\bar{\mathcal{O}}_S^\otimes)\cong\text{Bypass}_S$, as symmetric monoidal categories. Verification is elementary because both of these are 1-categories.

The theorem is then the universal property of symmetric monoidal envelopes (\cite{HA} 2.2.4.9).
\end{proof}

\noindent The reader may object: We have defined a $\mathcal{V}$-enriched category, but a bit more work is needed to define enriched functors (that is, to define an $\infty$-category $\text{Cat}^\mathcal{V}$). This won't be important for this paper, but the construction is a standard one, which we sketch below. See \cite{GH} for details.

We build $\text{Cat}^\mathcal{V}$ out of $\text{Cat}^\mathcal{V}_S$ as follows. The construction $S\mapsto\text{Bypass}_S$ is functorial $\text{Bypass}_{(-)}:\text{Set}\rightarrow\text{Cat}$, and the associated Grothendieck construction (or cocartesian fibration) is $p:\text{preCat}^\mathcal{V}\rightarrow\text{Set}$.

In $\text{preCat}^\mathcal{V}$, the objects are enriched categories, and morphisms are composites $F:\mathcal{C}\rightarrow\text{im}(F)\subseteq\mathcal{D}$, where the first functor acts as the identity on objects, and the second is fully faithful (that is, a $p$-cocartesian morphism in $\text{preCat}^\mathcal{V}$).

Now we need to insist that fully faithful essentially surjective functors are equivalences. We do this by inverting those $p$-cocartesian morphisms $f$ in $\text{preCat}^\mathcal{V}$ for which $p(f)$ is a surjection. The result is $\text{Cat}^\mathcal{V}$.

\begin{remark}
Actually, Gepner-Haugseng's enriched categories have underlying \emph{spaces} of objects, while ours have underlying \emph{sets} of objects. By \cite{GH} 5.3.17, these two theories are equivalent.
\end{remark}

\noindent See Section 1.7 for a comment on how THH interacts with the construction of $\text{Cat}^\mathcal{V}$.

\section{The cyclic category}
\noindent In this section, we will review the cyclic category with an eye towards our application to enriched category theory.

\begin{definition}
If $\Gamma$ is a \emph{nonempty} directed multigraph, an \emph{Eulerian tour} on $\Gamma$ is a total ordering of the edges in such a way that they form a single cycle. (That is, it is a path which begins and ends at the same vertex, visiting each edge exactly once.)

We will regard two Eulerian tours on $\Gamma$ as the same if they differ only by cyclic permutation.
\end{definition}

\noindent Suppose that $\Gamma\xrightarrow{f}\Gamma^\prime$ is a bypass operation between nonempty graphs. Given an Eulerian tour on $\Gamma^\prime$, then the total orderings of fibers of $f$ (part of the data of the morphism $f$) induces an Eulerian tour on $\Gamma$. A cyclic shift of the tour on $\Gamma^\prime$ induces a cyclic shift of the induced tour on $\Gamma$.

\begin{definition}
Let $\text{Bypass}_S^\text{Eul}$ denote the category of (finite, nonempty) directed multigraphs with a chosen Eulerian tour, along with those morphisms $\Gamma\rightarrow\Gamma^\prime$ in $\text{Bypass}_S$ which pull back the chosen Eulerian tour on $\Gamma^\prime$ to the chosen Eulerian tour on $\Gamma$.
\end{definition}

\noindent To understand the structure of $\text{Bypass}_S^\text{Eul}$, we first consider the case $S=\ast$. In this case, $\text{Bypass}_\ast^\text{Eul}$ may be identified with Connes' cyclic category.

\begin{definition}[Connes' cyclic category \cite{Connes}] 
Let $\mathbb{T}_n$ denote the category on objects $v_i$ generated by \emph{irreducible morphisms} $e_i$: $$v_0\xrightarrow{e_0}\cdots\xrightarrow{e_{n-1}}v_n\xrightarrow{e_n}v_0.$$ We call this category a \emph{cyclically ordered set}. Each object has a \emph{degree 1} endomorphism given by passing around this cycle once.

The cyclic category $\Lambda$ is the category of finite cyclically ordered sets and functors between them which send degree 1 endomorphisms to degree 1 endomorphisms. We call these \emph{degree 1 functors}.
\end{definition}

\noindent There are two functors $$i_v:\Lambda\rightarrow\text{Bypass}_\ast^\text{Eul},$$ $$i_e:\Lambda^\text{op}\rightarrow\text{Bypass}_\ast^\text{Eul},$$ where $i_v(\mathbb{T}_n)$ is the graph on one vertex whose edges are given by the objects $v_i$ of $\mathbb{T}_n$, and $i_e(\mathbb{T}_n)$ is the graph whose edges are given by the irreducible morphisms $e_i$ of $\mathbb{T}_n$.

These two functors treat morphisms as follows: If $f:\mathbb{T}_m\rightarrow\mathbb{T}_n$, then $i_v(f)$ acts as the function $f$. On the other hand, $i_e(f)$ sends an edge $e\in\mathbb{T}_n$ to the unique edge $e^\prime\in\mathbb{T}_m$ for which $f(e^\prime)$, written as a composite of elementary morphisms, includes the morphism $e$.

\begin{lemma}\label{LemLambda}
Both $i_v$ and $i_e$ are equivalences of categories.
\end{lemma}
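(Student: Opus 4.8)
The plan is to show directly that each of $i_v$ and $i_e$ is fully faithful and essentially surjective, working with the explicit combinatorial descriptions on both sides. The key observation is that a cyclically ordered set $\mathbb{T}_n$ carries two dual pieces of combinatorial data — its $n+1$ objects $v_0,\ldots,v_n$ arranged in a cycle, and its $n+1$ irreducible morphisms $e_0,\ldots,e_n$ arranged in a cycle — and an Eulerian tour on a one-vertex graph is nothing but a cyclically ordered set of edges. So essential surjectivity of $i_v$ (resp.\ $i_e$) amounts to the remark that a graph on one vertex with a chosen Eulerian tour is precisely a finite cyclically ordered set, with the edges playing the role of the objects $v_i$ (resp.\ the irreducibles $e_i$); here the ``two ways of reading $\mathbb{T}_n$'' gives both functors at once. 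I would state this first and dispense with essential surjectivity quickly.

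For full faithfulness the work is entirely at the level of morphisms, and I would treat $i_v$ and $i_e$ separately. For $i_v$: a morphism $\mathbb{T}_m\to\mathbb{T}_n$ in $\Lambda$ is a degree-$1$ functor $f$ on objects, and a morphism $i_v(\mathbb{T}_m)\to i_v(\mathbb{T}_n)$ in $\mathrm{Bypass}_\ast^{\mathrm{Eul}}$ is a bypass operation compatible with the chosen tours. I would check that ``bypass operation on one-vertex graphs'' $=$ ``function on edge-sets together with a total ordering of each fibre that makes the fibre a path'' — but on one vertex every sequence of edges is automatically a path, so it is just a function $f$ on edges plus linear orders on fibres; and the Eulerian-tour-compatibility condition forces those linear orders, and forces $f$ to be ``degree $1$'' in exactly Connes' sense. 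Conversely a degree-$1$ functor determines such orderings canonically. This bijection is the heart of the argument for $i_v$.

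For $i_e$: this is the genuinely asymmetric direction, and I expect it to be the main obstacle, because $i_e$ reverses morphisms and sends an edge of the target to ``the unique edge $e'$ of the source whose image $f(e')$, expanded into irreducibles, contains $e$.'' I would first check this map of edge-sets is well-defined (each irreducible $e$ of $\mathbb{T}_n$ appears in the expansion of exactly one $f(e')$, because the images of the irreducibles of $\mathbb{T}_m$ under a degree-$1$ functor partition the cycle of irreducibles of $\mathbb{T}_n$), then check it is a bypass operation — the fibre over $e'$ is the ordered block of irreducibles making up $f(e')$, which is visibly a path — and tour-compatible. Functoriality and injectivity/surjectivity on hom-sets then follow by reconstructing $f:\mathbb{T}_m\to\mathbb{T}_n$ from the ``block decomposition'' of the $e$-cycle of $\mathbb{T}_n$ indexed by the $e$-cycle of $\mathbb{T}_m$. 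The only subtle point is the bookkeeping around the ``degree'' condition (how many times the cycle wraps), which is what forces functors to remember, not merely the partition into blocks, but where the basepoint sits; I would phrase everything in terms of cyclic orders to keep this honest. A clean alternative, if one wants to avoid reproving both directions, is to establish $i_v$ carefully and then produce a self-equivalence $\Lambda\xrightarrow{\sim}\Lambda^{\mathrm{op}}$ sending $\mathbb{T}_n$ to $\mathbb{T}_n$ by interchanging the object-cycle and the irreducible-cycle (the classical self-duality of $\Lambda$), and observe $i_e\simeq i_v\circ(\text{this duality})$; then full faithfulness of $i_e$ is immediate. I would likely present the direct argument for $i_v$ and then deduce $i_e$ via this self-duality, flagging that the self-duality itself is exactly the $S=\ast$ shadow of the counting-in-two-ways lemma promised in the introduction.
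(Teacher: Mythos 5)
Your proposal is correct and the core of it is the same argument as the paper's: you are really just writing out, in ``fully faithful + essentially surjective'' language, the explicit inverse functors that the paper constructs. For $i_v$ the paper's inverse is the forgetful functor $\text{Bypass}_\ast^\text{Eul}\to\Lambda$ remembering the cyclically ordered edge set, which is exactly your ``one-vertex graph with Eulerian tour $=$ cyclically ordered finite set.'' For $i_e$ your ``direct'' argument (each irreducible of $\mathbb{T}_n$ lies in the expansion of exactly one $f(e')$, giving a well-defined edge map in the opposite direction, with fibers the blocks of the partition) is precisely the paper's construction of $i_e^{-1}$, which sends $e$ to the unique $e'$ with $f(e'_-) < e \le f(e')$; your degree-1/partition justification is the same observation packaged slightly differently.

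The one genuine divergence is your preferred route of deducing $i_e$ from $i_v$ together with the \emph{known} self-duality $\Lambda\cong\Lambda^\text{op}$ of Connes. The paper deliberately goes the other way: it proves $i_v$ and $i_e$ both directly, and then \emph{derives} the self-duality as the composite $i_e^{-1}i_v$. This is a small but real difference in logical dependency. The paper's route is self-contained and, more importantly, the explicit formula for $i_e^{-1}$ is stated for general $S$ (not only $S=\ast$) and is reused verbatim in Lemma \ref{L2} to identify the left fibration $\text{Bypass}_S^\text{Eul}\to\Lambda^\text{op}$; so the direct construction is not optional overhead but load-bearing for the rest of the paper. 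If you do go via the imported self-duality, you should at least record the explicit formula for $i_e^{-1}$ anyway, both because you will need it later and because one must check that the composite $i_v\circ(\text{duality})$ really is the stated $i_e$ on morphisms, which is essentially the same bookkeeping as the direct proof. Otherwise the plan is sound, and your flag that the self-duality is the $S=\ast$ shadow of the counting-in-two-ways lemma matches the paper's perspective exactly.
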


\noindent In particular, we have a canonical equivalence $\Lambda\cong\Lambda^\text{op}$, known from \cite{Connes}.

\begin{proof}
Let $\Gamma\in\text{Bypass}_S^\text{Eul}$ (notice $S$ is not necessarily $\ast$). The Eulerian tour endows the edges of $\Gamma$ with a cyclic ordering, so there is a forgetful functor $\text{Bypass}_S^\text{Eul}\rightarrow\Lambda$ which remembers just the set of edges. When $S=\ast$, this is $i_v^{-1}$, so $i_v$ is an equivalence.

On the other hand, suppose that $\Gamma,\Gamma^\prime\in\text{Bypass}_S^\text{Eul}$ have cyclically ordered sets of edges $\mathbb{T}_m,\mathbb{T}_n$, respectively. Given a map $f:\Gamma\rightarrow\Gamma^\prime$, there is an induced map $f^\ast:\mathbb{T}_n\rightarrow\mathbb{T}_m$ which sends an edge $e\in\Gamma^\prime$ to the unique edge $e^\prime\in\Gamma$ such that $$f(e^\prime)\geq e>f(e^\prime_{-}).$$ Here, $e^\prime_{-}$ is the edge immediately preceding $e^\prime$ in the cyclic ordering.

The significance is that $e^\prime$ and $e$ have the same source vertex, so we are `remembering an Eulerian tour by its vertices'. Although there may be other edges in $\Gamma$ which share the same source with $e$, $e^\prime$ is the only one which (roughly speaking) has the same source for purely formal reasons.

Define $i_e^{-1}(f)=f^\ast$, so that $i_e^{-1}$ is functorial $\text{Bypass}_S^\text{Eul}\rightarrow\Lambda^\text{op}$. We claim that $i_e^{-1}$ is inverse to $i_e$ when $S=\ast$.

In this case, $i_e^{-1}i_e(\mathbb{T}_m)$ is the cyclic set of elementary edges of $\mathbb{T}_m$, and if $f:\mathbb{T}_m\rightarrow\mathbb{T}_n$, then $i_e^{-1}i_e(f)$ sends an elementary edge $v_i\xrightarrow{e_i} v_{i+1}$ of $\mathbb{T}_m$ to the first elementary edge whose source is the same as $f(v_i)$.

In other words, there is a natural equivalence $i_e^{-1}i_e(\mathbb{T}_m)\to\mathbb{T}_m$ given by relabeling the vertices (labeled by elementary edges $e_i$) by vertices $v_i$. Similarly, $i_e i_e^{-1}(\mathbb{T}_m)\cong\mathbb{T}_m$, so $i_e,i_e^{-1}$ are inverse, completing the proof.
\end{proof}

\noindent The cyclic category also a few other useful properties, which we review now. We will say that:

\begin{definition}
A \emph{cyclic space} $X$ is a presheaf on $\Lambda$. Call $\text{cTop}=\mathcal{P}(\Lambda)$.
\end{definition}

\noindent First, notice that there is a functor $i:\Delta\rightarrow\Lambda$ from the simplex category (of finite, nonempty, totally ordered sets), which sends the totally ordered set $\{0<\cdots<n\}$ to the cyclically ordered set $\{0<\cdots<n<0\}$. Hence we may regard any cyclic space $X$ as having an underlying simplicial space $i^\ast X$.

We write $|X|$ for the geometric realization of the simplicial space. If $c$ denotes the functor $\Delta\rightarrow\ast$, then by definition $$|X|=c_\ast i^\ast X.$$ (For our pushforward and pullback notation, see Section \ref{SNot}.)

\begin{lemma}[\cite{DHK} Proposition 2.7]\label{LemDHK}
If $\mathbb{T}_n\in\Lambda\subseteq\text{cTop}$ denotes the representable cyclic space, then $|\mathbb{T}_n|\cong S^1$, and the functor $c_\ast i^\ast:\mathbb{T}_n\rightarrow\text{Top}$ factors through the subcategory $BS^1\subseteq\text{Top}$ of $S^1$-torsors.
\end{lemma}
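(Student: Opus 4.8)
The plan is to reduce everything to a well-known calculation about the nerve of $\Lambda$, combined with the observation that the representable $\mathbb{T}_n$ is closely related to a universal cover. First I would recall that for the representable cyclic set $\mathbb{T}_n = \Lambda(-,\mathbb{T}_n)$, the underlying simplicial set $i^\ast\mathbb{T}_n$ has $k$-simplices $\Lambda(\mathbb{T}_k,\mathbb{T}_n)$, i.e.\ degree-$1$ functors between cyclically ordered sets. A degree-$1$ functor $\mathbb{T}_k\to\mathbb{T}_n$ is determined by a weakly monotone ``cyclic'' map on the level of the $n+1$, resp.\ $k+1$, elementary edges together with the choice of where the basepoint goes; concretely $\Lambda(\mathbb{T}_k,\mathbb{T}_n)$ is the set of $(n+1)$-periodic weakly monotone maps $\mathbb{Z}\to\mathbb{Z}$ sending $0,\dots,k$ into a window, i.e.\ it fibers over $\mathbb{Z}/(n+1)$ with fiber the set of order-preserving maps $[k]\to[n]$ — this is exactly the standard presentation $\Lambda(\mathbb{T}_k,\mathbb{T}_n)\cong \mathbb{Z}/(n+1)\times\Delta([k],[n])$ up to the usual cyclic bookkeeping. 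The upshot is that $i^\ast\mathbb{T}_n$ is the $\mathbb{Z}/(n+1)$-fold (hence, after realization, $S^1$-equivariantly) covered simplicial set whose realization is $S^1$; this is precisely \cite{DHK} Proposition~2.7, which I would simply cite for the computation $|\mathbb{T}_n|\cong S^1$.

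For the second assertion — that $c_\ast i^\ast\colon \mathbb{T}_n\to\mathrm{Top}$ factors through $BS^1$ — the key point is naturality. The functor $c_\ast i^\ast=|{-}|\colon \mathrm{cTop}\to\mathrm{Top}$ is symmetric monoidal for the cartesian products and, more importantly, it carries the tautological self-map of each representable. Concretely, each $\mathbb{T}_n\in\Lambda$ carries its degree-$1$ endomorphism $\tau_n$, and these are compatible under all degree-$1$ functors; hence they assemble into an automorphism of the identity functor $\Lambda\hookrightarrow\mathrm{cTop}\xrightarrow{|-|}\mathrm{Top}$ restricted to the full subcategory on the $\mathbb{T}_n$. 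Applying $|-|$, the endomorphism $|\tau_n|\colon |\mathbb{T}_n|\to|\mathbb{T}_n|$ is, under the identification $|\mathbb{T}_n|\cong S^1$ from the first part, rotation by $\tfrac{1}{n+1}$ — in particular it generates a free $\mathbb{Z}/(n+1)$-action whose homotopy quotient is $|\mathbb{T}_n|_{h(\mathbb{Z}/(n+1))}\simeq \ast$ (the realization of $\Lambda_{/\mathbb{T}_n}$, which has a terminal object, hence is contractible). Taking the colimit over $n$ of these compatible finite cyclic actions exhibits a free $S^1$-action on each $|\mathbb{T}_n|\simeq S^1$; freeness plus $|\mathbb{T}_n|\simeq S^1$ forces the orbit to be a point, so $|\mathbb{T}_n|$ is an $S^1$-torsor, i.e.\ lies in $BS^1\subseteq\mathrm{Top}$. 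Naturality of $\tau$ makes this assignment functorial in $\mathbb{T}_n$, giving the desired factorization $\mathbb{T}_n\to BS^1$.

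A cleaner way to package the second half, which I would probably use in the writeup, is this: the inclusion $\Lambda\hookrightarrow\mathrm{cTop}$ followed by $|-|$ is a functor $F\colon\Lambda\to\mathrm{Top}$ equipped with the natural automorphism $\tau$, i.e.\ a functor $\Lambda\times B\mathbb{Z}\to\mathrm{Top}$; since $B\mathbb{Z}\simeq S^1$ acts and each value $F(\mathbb{T}_n)\simeq S^1$ is an $S^1$-space on which this action is free (checked on $\pi_0$ and $\pi_1$, where rotation by $1/(n+1)$ generates a dense subgroup of $S^1$ in the realized colimit), the functor lands in the $\infty$-category of free $S^1$-spaces of the homotopy type $S^1$, which is equivalent to $BS^1$. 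The main obstacle I anticipate is not conceptual but bookkeeping: getting the cyclic/periodic combinatorics of $\Lambda(\mathbb{T}_k,\mathbb{T}_n)$ exactly right so that the self-map $\tau_n$ realizes to honest rotation (and in particular acts freely), rather than merely ``a'' degree-one map; but this is exactly the content of \cite{DHK} Proposition~2.7 together with its naturality, so in practice the lemma is a citation plus the observation that the cited equivalence is natural for the $\tau_n$'s.
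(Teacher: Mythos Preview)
The paper does not prove this lemma; it is stated purely as a citation of \cite{DHK} Proposition~2.7 and used as a black box thereafter. So there is no in-paper argument to compare against.

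Your sketch of the first assertion, $|\mathbb{T}_n|\simeq S^1$, is fine and is essentially the DHK computation you cite.

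The second assertion, however, has a genuine conceptual gap, not merely a bookkeeping one. You argue that the cyclic operators $\tau_n$ assemble into a natural automorphism of $c_\ast i^\ast$ restricted to $\Lambda$, hence a functor $\Lambda\times B\mathbb{Z}\to\mathrm{Top}$, and then invoke ``$B\mathbb{Z}\simeq S^1$'' to obtain an $S^1$-action. This conflates two different structures. A $\mathbb{Z}$-action on $X$ is a functor $B\mathbb{Z}\to\mathrm{Top}$; an $S^1$-action in the sense required for $X$ to lie in $BS^1\subseteq\mathrm{Top}^{S^1}$ is a functor $BS^1\simeq K(\mathbb{Z},2)\to\mathrm{Top}$. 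The equivalence $B\mathbb{Z}\simeq S^1$ is an equivalence of \emph{spaces}, not an identification of the group $\mathbb{Z}$ with the topological group $S^1$. Concretely, $\tau_n$ has order $n{+}1$, so the $\mathbb{Z}$-action you build factors through $\mathbb{Z}/(n{+}1)$ and carries no homotopy-coherent $S^1$-structure on its own. Your ``colimit over $n$ of these compatible finite cyclic actions'' and the ``dense subgroup'' remark do not repair this: there is no diagram of groups in sight whose colimit is $S^1$, and density is a point-set notion with no force in the homotopy category.

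The actual content of \cite{DHK} at this point is precisely the construction of a genuine $S^1$-action on geometric realizations of cyclic objects, which requires more than the discrete automorphism $\tau$ --- one uses either the topological cyclic simplices with their honest circle symmetry, or a model of $\Lambda$ via the paracyclic category and $\mathbb{R}/\mathbb{Z}$. If you do not want to cite DHK as a black box (as the paper does), that is the ingredient you must supply.
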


\noindent Hence we have a functor $r:\Lambda\rightarrow BS^1$.

\begin{proposition}\label{PropSquare}
In the commutative square $$\xymatrix{
\Delta\ar[r]^i\ar[d]_c &\Lambda\ar[d]^r \\
\ast\ar[r]_i &BS^1,
}$$ there is a natural equivalence $i^\ast r_\ast\cong c_\ast i^\ast$ of functors $\text{cTop}\rightarrow\text{Top}$.
\end{proposition}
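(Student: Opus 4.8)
The plan is to prove the natural equivalence $i^\ast r_\ast \cong c_\ast i^\ast$ by a base-change (Beck--Chevalley) argument for the given commutative square, exploiting the fact that $r:\Lambda\to BS^1$ already factors the geometric realization functor via Lemma \ref{LemDHK}. Recall $|X| = c_\ast i^\ast X$ by definition, and $c_\ast i^\ast$ factors through $BS^1\subseteq\text{Top}$ on representables; by colimit-preservation of all functors involved, it factors through $BS^1$ on all of $\text{cTop}$. So the content of the proposition is really that pushing a cyclic space forward to $\mathcal{P}(BS^1)$ and then restricting along the basepoint $i:\ast\to BS^1$ recovers the underlying (realized) space with its $S^1$-action forgotten.

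First I would set up the square of $\infty$-categories of presheaves. Applying $\mathcal{P}(-)$ and the pushforward construction from Section \ref{SNot} to the commutative square of small $\infty$-categories yields functors $i^\ast: \mathcal{P}(\Lambda)\to\mathcal{P}(\Delta)$ (precomposition), $c_\ast:\mathcal{P}(\Delta)\to\text{Top}$ (colimit over $\Delta^\text{op}$, i.e. geometric realization, by Example \ref{ExColim}), $r_\ast:\mathcal{P}(\Lambda)\to\mathcal{P}(BS^1)$ (left Kan extension along $r$), and $i^\ast:\mathcal{P}(BS^1)\to\text{Top}$ (precomposition along the basepoint inclusion, i.e. evaluation at the unique object of $BS^1$, equivalently forgetting the $S^1$-action). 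There is a canonical Beck--Chevalley transformation $c_\ast i^\ast \Rightarrow i^\ast r_\ast$ coming from the commutativity of the square together with the $(r_\ast, r^\ast)$ and $(c_\ast, c^\ast)$ adjunctions: the unit $c^\ast i^\ast_{BS^1} \to c^\ast i^\ast_{BS^1}$ composed with the mate of $i_\Delta^\ast r^\ast \cong c^\ast i_\ast^\ast$ (which holds on the nose since $r\circ i_\Delta = i_\ast \circ c$). I would then argue this mate is an equivalence.

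Second, since all four composite functors preserve small colimits (each is a composite of a colimit-preserving left adjoint), and $\mathcal{P}(\Lambda)$ is generated under colimits by the representables $\mathbb{T}_n$, it suffices to check that the Beck--Chevalley map is an equivalence on each representable $\mathbb{T}_n\in\Lambda\subseteq\text{cTop}$. On the one hand, $c_\ast i^\ast \mathbb{T}_n = |\mathbb{T}_n| \cong S^1$ by Lemma \ref{LemDHK}. On the other hand, $r_\ast \mathbb{T}_n$: since $r_\ast$ restricted to representables is $r$ itself, and $r(\mathbb{T}_n)$ is the single object of $BS^1$, we get that $r_\ast\mathbb{T}_n$ is the representable presheaf on $BS^1$ at its unique object, whose value under $i^\ast$ (evaluation at that object / underlying space) is $\text{Map}_{BS^1}(\ast,\ast) \simeq S^1$ — the $S^1$-torsor $S^1$ with its free action, whose underlying space is $S^1$. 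One must check the two identifications of $S^1$ agree compatibly, i.e. that the comparison map is the identity; but this is forced because both are computed by the same functor $c_\ast i^\ast$ (Lemma \ref{LemDHK} says precisely that $c_\ast i^\ast$ on the full subcategory spanned by the $\mathbb{T}_n$ factors as $\Lambda \xrightarrow{r} BS^1 \xrightarrow{\text{forget}} \text{Top}$, and the forgetful functor $BS^1\to\text{Top}$ is exactly $i^\ast$). Unwinding, the Beck--Chevalley map on $\mathbb{T}_n$ is an equivalence essentially by the definition of $r$.

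The main obstacle I anticipate is bookkeeping the naturality/coherence: verifying that the Beck--Chevalley transformation is genuinely a natural transformation of functors $\text{cTop}\to\text{Top}$ (not merely a pointwise equivalence) and that it restricts on the $\mathbb{T}_n$ to the equivalence supplied by Lemma \ref{LemDHK}. The cleanest way around this is to avoid constructing the transformation by hand: instead, observe that $r$ is \emph{defined} as the factorization of $c_\ast i^\ast|_\Lambda$ through $BS^1$ (this is the content of "Hence we have a functor $r:\Lambda\to BS^1$" following Lemma \ref{LemDHK}), so by construction $i^\ast\circ r \cong c_\ast i^\ast|_\Lambda$ as functors $\Lambda\to\text{Top}$, where $i^\ast:\mathcal{P}(BS^1)\to\text{Top}$ here is the underlying-space functor. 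Then $i^\ast r_\ast$ and $c_\ast i^\ast$ are two colimit-preserving functors $\mathcal{P}(\Lambda)\to\text{Top}$ which agree after restriction to representables $\Lambda$; by the universal property of $\mathcal{P}(\Lambda)$ as the free cocompletion (\cite{HTT} 5.1.5.6), any such pair is canonically equivalent. This reduces the whole proposition to the identity $i^\ast r_\ast|_\Lambda \cong i^\ast \circ r = c_\ast i^\ast|_\Lambda$, the first equivalence being that $r_\ast$ restricts to $r$ on representables and the second being the definition of $r$.
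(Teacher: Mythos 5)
Your final paragraph is exactly the paper's proof: Lemma \ref{LemDHK} gives $i^\ast r_\ast\cong c_\ast i^\ast$ on representables, both composites preserve colimits (each factor having a right adjoint), and $\mathcal{P}(\Lambda)$ is the free cocompletion of $\Lambda$, so agreement on representables extends. The Beck--Chevalley setup in your first two paragraphs is exploratory scaffolding that you correctly identify as unnecessary and discard in favor of the cleaner argument.
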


\begin{proof}
Lemma \ref{LemDHK} asserts that $i^\ast r_\ast\cong c_\ast i^\ast$ when restricted to $\Lambda\subseteq\mathcal{P}(\Lambda)$. Moreover, $r_\ast,c_\ast$ each have right adjoints ($r^\ast,c^\ast$), and each $i^\ast$ has right adjoint (given by right Kan extension). Therefore, $i^\ast r_\ast$ and $c_\ast i^\ast$ each preserve colimits by the adjoint functor theorem. Since all presheaves are colimits of representables, the proposition follows.
\end{proof}

\begin{corollary}\label{ColimLambda}
If $X\in\text{cTop}$, then $|X|$ has a canonical $S^1$-action, and $$\text{colim}_{\Lambda^\text{op}}(X)\cong|X|_{hS^1}.$$
\end{corollary}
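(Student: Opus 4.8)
The plan is to trace the colimit through the commutative square of Proposition \ref{PropSquare} and read off the answer using the functoriality of the pushforward $(-)_\ast$. First, by Example \ref{ExColim} we have $\text{colim}_{\Lambda^\text{op}}(X)\cong (c_\Lambda)_\ast(X)$, where $c_\Lambda:\Lambda\rightarrow\ast$ is the terminal functor. Since the square commutes, $c_\Lambda$ factors as $\Lambda\xrightarrow{r}BS^1\xrightarrow{c}\ast$, and pushforward of presheaves is functorial in the base, so $(c_\Lambda)_\ast\cong c_\ast\circ r_\ast$. Hence $\text{colim}_{\Lambda^\text{op}}(X)\cong c_\ast(r_\ast X)$, where now $r_\ast X\in\mathcal{P}(BS^1)$.

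Next I would unwind what the $\infty$-category $\mathcal{P}(BS^1)$ and the two functors out of it mean. Since $BS^1$ is a one-object $\infty$-groupoid with automorphism space $S^1$ (and $BS^1\simeq(BS^1)^\text{op}$ via inversion), a presheaf on $BS^1$ is precisely a space equipped with an $S^1$-action, namely its value at the unique object. Under this identification, $i^\ast:\mathcal{P}(BS^1)\rightarrow\text{Top}$, being precomposition with $i:\ast\rightarrow BS^1$, is the functor that forgets the action and records the underlying space; and $c_\ast:\mathcal{P}(BS^1)\rightarrow\text{Top}$, being the left adjoint of the constant-presheaf functor, computes $\text{colim}_{(BS^1)^\text{op}}$, which for the one-object groupoid $BS^1$ is exactly the homotopy-orbit (Borel) construction $(-)_{hS^1}$ (this is the standard computation of a colimit over $BG$, cf.\ \cite{HTT}). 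So $c_\ast(r_\ast X)\cong(r_\ast X)_{hS^1}$, where $r_\ast X$ is viewed as an $S^1$-space.

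It remains to identify the underlying $S^1$-space $r_\ast X$, and here is where Proposition \ref{PropSquare} does the work: its underlying space is $i^\ast r_\ast X\cong c_\ast i^\ast X$, and $c_\ast i^\ast X=|X|$ is precisely the geometric realization of the cyclic space $X$ by definition. Thus $|X|$ is the underlying space of the $S^1$-space $r_\ast X$; I would \emph{define} the canonical $S^1$-action on $|X|$ to be the one so obtained. Assembling the three steps, $\text{colim}_{\Lambda^\text{op}}(X)\cong c_\ast(r_\ast X)\cong(r_\ast X)_{hS^1}\cong |X|_{hS^1}$, which is the claim.

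The only delicate point — and the step I would be most careful writing out — is the bookkeeping of variances around $BS^1$: checking that ``precompose by $i$'' really gives the forgetful functor to underlying spaces, that ``left adjoint of constant'' really gives homotopy orbits and not homotopy fixed points, and that the equivalence $BS^1\simeq(BS^1)^\text{op}$ does not secretly twist the $S^1$-action. None of this is difficult, but it is exactly the spot where orientation-type errors appear; once it is pinned down, the corollary is a purely formal consequence of Proposition \ref{PropSquare}, Example \ref{ExColim}, and functoriality of $(-)_\ast$.
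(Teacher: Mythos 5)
Your proposal is correct and follows essentially the same route as the paper: apply Proposition \ref{PropSquare} to identify $i^\ast r_\ast X$ with $|X|$ (giving the $S^1$-action on $|X|$), and then use Example \ref{ExColim} together with the factorization $\Lambda\to BS^1\to\ast$ to identify $\mathrm{colim}_{\Lambda^{\mathrm{op}}}(X)$ with $(r_\ast X)_{hS^1}\cong|X|_{hS^1}$. You spell out the identification $\mathcal{P}(BS^1)\simeq\mathrm{Top}^{S^1}$ and the role of $c_\ast$ as homotopy orbits in more detail than the paper does, but this is the same argument.
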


\begin{proof}
By Proposition \ref{PropSquare}, $|X|\cong i^\ast r_\ast X$. As a presheaf over $BS^1$, $r_\ast X$ can be identified with an $S^1$-space, and $i^\ast r_\ast X$ with the underlying space (forgetting the $S^1$-action). Therefore, $|X|$ has a canonical $S^1$-action.

Moreover, if $j$ denotes the map $BS^1\rightarrow\ast$, then $j_\ast(-)\cong(-)_{hS^1}$ by Example \ref{ExColim}. Therefore, $$|X|_{hS^1}\cong j_\ast r_\ast X\cong (jr)_\ast X\cong\text{colim}(X).$$
\end{proof}

\begin{proposition}
The functor $r:\Lambda\rightarrow BS^1$ exhibits $BS^1$ as the classifying space of $\Lambda$.
\end{proposition}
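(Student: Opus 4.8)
The plan is to show that the canonical factorization $\Lambda\to|\Lambda|\xrightarrow{\bar r}BS^1$ of $r$ --- which exists because $BS^1$ is an $\infty$-groupoid --- has $\bar r$ an equivalence. That $|\Lambda|$ and $BS^1$ are equivalent \emph{as spaces} is classical \cite{Connes}, and in fact drops out of Corollary \ref{ColimLambda} applied to the terminal presheaf $\ast$: $|\Lambda|\cong\text{colim}_{\Lambda^{\text{op}}}(\ast)\cong|\ast|_{hS^1}\cong\ast_{hS^1}\cong BS^1$. So the real content is that the particular functor $r$ of Lemma \ref{LemDHK} realizes this equivalence, and the argument must keep track of the map, not just the two homotopy types.

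First I would rewrite the classifying space using the factorization of the terminal map as $\Lambda\xrightarrow{r}BS^1\xrightarrow{j}\ast$: since pushforward of presheaves is functorial along composites, $|\Lambda|\cong j_\ast r_\ast(\ast)$, where $\ast$ now denotes the terminal presheaf on whichever category is in play. Likewise $|BS^1|\cong j_\ast(\ast)$, and $|BS^1|\cong BS^1$ since the classifying space of an $\infty$-groupoid is itself. Unwinding the functoriality of the classifying-space construction, the map $\bar r$ is identified with $j_\ast(\phi)$, where $\phi\colon r_\ast(\ast)\to\ast$ is the unique map to the terminal object of $\mathcal{P}(BS^1)$ --- equivalently the counit $r_\ast r^\ast(\ast)\to\ast$, using $r^\ast(\ast)\cong\ast$. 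Thus it suffices to show $\phi$ is an equivalence in $\mathcal{P}(BS^1)$.

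For this, note that $BS^1$ is a connected $\infty$-groupoid, so $\mathcal{P}(BS^1)\cong\text{Fun}(BS^1,\text{Top})$ and evaluation at the basepoint --- which is precisely the functor $i^\ast$ appearing in Proposition \ref{PropSquare} --- is conservative, since a map of local systems over a connected space is an equivalence as soon as it is one on a single stalk. By Proposition \ref{PropSquare}, $i^\ast r_\ast(\ast)\cong c_\ast i^\ast(\ast)\cong c_\ast(\ast)\cong|\Delta|$, which is contractible because $\Delta$ has a terminal object. Hence $i^\ast(\phi)$ is a map from a contractible space to the point $i^\ast(\ast)$, so an equivalence; by conservativity $\phi$, and therefore $\bar r$, is an equivalence. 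The one delicate point --- the ``main obstacle'' --- is the identification of $\bar r$ with $j_\ast(\phi)$ in the second paragraph: it is easy to produce \emph{an} equivalence $|\Lambda|\cong BS^1$, and the work lies in confirming it is the one induced by $r$, which is exactly what forces the detour through $\phi$ and Proposition \ref{PropSquare}.
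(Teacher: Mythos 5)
Your argument is correct and follows essentially the same route as the paper: factor the terminal functor as $\Lambda\xrightarrow{r}BS^1\xrightarrow{j}\ast$, compute $|\Lambda|\cong j_\ast r_\ast(\ast)$, and invoke Proposition \ref{PropSquare} to see that $r_\ast(\ast)$ has contractible underlying space and hence is the terminal presheaf on $BS^1$. You simply make explicit two things the paper leaves implicit --- that the comparison map $\bar r$ is $j_\ast$ applied to the counit $\phi\colon r_\ast(\ast)\to\ast$, and that $i^\ast$ is conservative so contractibility of $i^\ast r_\ast(\ast)$ forces $\phi$ to be an equivalence --- but the substance is identical.
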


\begin{proof}
We have $|\Lambda|\cong|\Lambda^\text{op}|\cong\text{colim}_{\Lambda^\text{op}}(\ast)$, the first equivalence by Lemma \ref{LemLambda} ($\Lambda\cong\Lambda^\text{op}$), and the second by \cite{HTT} 3.3.4.6. Once again, consider the functors $$\Lambda\xrightarrow{r}BS^1\xrightarrow{j}\ast.$$ Then $|\Lambda|\cong(jr)_\ast(\ast)\cong j_\ast r_\ast(\ast)$. By Proposition \ref{PropSquare}, $r_\ast(\ast)$ has underlying space the geometric realization of $\ast$, so $$|\Lambda|\cong j_\ast r_\ast(\ast)\cong j_\ast(\ast)\cong(\ast)_{hS^1}\cong BS^1.$$
\end{proof}

\section{The presheaf $\mathcal{O}_\text{thh}$}
\noindent In the next two sections, we will prove Theorem \ref{ThmCase1}, describing the presheaf $\mathcal{O}_\text{thh}$. We begin in this section by defining $\mathcal{O}_\text{thh}$.

Suppose that $\mathcal{V}$ is a closed symmetric monoidal, presentable $\infty$-category and $\mathcal{C}:\text{Bypass}_S\rightarrow\mathcal{V}$ is a $\mathcal{V}$-enriched category with set $S$ of objects. As in Section \ref{SNot}, we can extend $\mathcal{C}$ continuously to a functor $$\mathcal{C}_\ast:\mathcal{P}(\text{Bypass}_S)\rightarrow\mathcal{V}$$ which preserves colimits.

Let $(\mathcal{O}_\text{thh})_\bullet:\Lambda^\text{op}\rightarrow\mathcal{P}(\text{Bypass}_S)$ be the cyclic presheaf defined by $$(\mathcal{O}_\text{thh})_n=\coprod_{X_0,\ldots,X_n\in S}(X_0,X_1,\ldots,X_n,X_0),$$ and let $\mathcal{O}_\text{thh}\in\mathcal{P}(\text{Bypass}_S)$ be its geometric realization.

To be explicit, the cyclic structure is as follows: If $f:\mathbb{T}_m\rightarrow\mathbb{T}_n$ is a map in $\Lambda$, write $e_i$ for an elementary morphism of $\mathbb{T}_m$, and $f(e_i)=e_{i1}\cdots e_{ik}$ is a composite of elementary morphisms in $\mathbb{T}_n$. Then there is a bypass operation from $(X_0,\ldots,X_n,X_0)$ to some $(Y_0,\ldots,Y_m,Y_0)$ given by replacing each path $e_{i1}\cdots e_{ik}$ by a single edge, or by introducing a loop if $k=0$.

\begin{definition}[THH of an enriched category]\label{DefTHH}
If $\mathcal{C}:\text{Bypass}_S\rightarrow\mathcal{V}$ is a $\mathcal{V}$-enriched category with set $S$ of objects, then $\text{THH}(\mathcal{C})=\mathcal{C}_\ast(\mathcal{O}_\text{thh})$.
\end{definition}

\noindent In particular, because $\mathcal{C}_\ast$ preserves colimits, $\text{THH}(\mathcal{C})$ is the geometric realization $$\text{THH}(\mathcal{C})=\left\lvert[n]\mapsto\coprod_{X_0,\ldots,X_n}\mathcal{C}(X_0,X_1)\otimes\cdots\otimes\mathcal{C}(X_n,X_0)\right\rvert.$$

\begin{remark}
When $\mathcal{V}=\text{Sp}$, a $\mathcal{V}$-enriched category is a spectral category, and Definition \ref{DefTHH} is the usual cyclic bar construction computing THH (\cite{BM} Section 3).

When $\mathcal{V}=\text{Top}$, a $\mathcal{V}$-enriched category is an $\infty$-category, and Definition \ref{DefTHH} is the usual cyclic bar construction computing \emph{unstable THH} (\cite{Arbeit} pg. 857).
\end{remark}

\begin{remark}\label{RmkBar}
For any $X_0,\ldots,X_n\in S$, there is a canonical Eulerian tour on $(X_0,\ldots,X_n,X_0)$ given by the cycle $X_0\to\cdots\to X_n\to X_0$. Write $[X_0,\ldots,X_n,X_0]\in\text{Bypass}_S^\text{Eul}$ when we wish to remember the canonical Eulerian tour.

Then the cyclic presheaf $(\mathcal{O}_\text{thh})_\bullet$ lifts to a cyclic presheaf $$(\overline{\mathcal{O}}_\text{thh})_\bullet:\Lambda^\text{op}\rightarrow\mathcal{P}(\text{Bypass}_S^\text{Eul})$$ given by $(\overline{\mathcal{O}}_\text{thh})_n=\coprod_{X_0,\ldots,X_n\in S}[X_0,X_1,\ldots,X_n,X_0]$, such that $$\mathcal{O}_\text{thh}\cong k_\ast\overline{\mathcal{O}}_\text{thh}.$$ Here $k:\text{Bypass}_S^\text{Eul}\rightarrow\text{Bypass}_S$ is the forgetful functor.
\end{remark}

\section{THH of associative algebras}
\noindent In this section, we will identify $\mathcal{O}_\text{thh}$ when $S=\ast$. For the rest of this section, set $S=\ast$, and identify $\text{Bypass}_\ast^\text{Eul}\cong\Lambda^\text{op}$ via Lemma \ref{LemLambda}.

Let $\mathcal{Y}:\Lambda^\text{op}\times\Lambda\rightarrow\text{Top}$ denote the Yoneda map $\mathcal{Y}(X,Y)=\text{Map}(X,Y)$, which we may regard as a presheaf $\mathcal{Y}\in\mathcal{P}(\Lambda\times\Lambda^\text{op})$.

\begin{proposition}\label{PropOthh}
Consider the diagram $$\xymatrix{
\Delta\times\Lambda^\text{op}\ar[r]^c\ar[d]_i &\Lambda^\text{op}\ar[d]^i\ar@{=}[r] &\text{Bypass}_\ast^\text{Eul}\ar[r]^k &\text{Bypass}_\ast \\
\Lambda\times\Lambda^\text{op}\ar[r]_r &BS^1\times\Lambda^\text{op}, &&
}$$ where $k$ is the forgetful functor, and the square is $\Lambda^\text{op}$ times that of Proposition \ref{PropSquare}. Then $$\mathcal{O}_\text{thh}\cong k_\ast c_\ast i^\ast\mathcal{Y}\cong k_\ast i^\ast r_\ast\mathcal{Y}.$$
\end{proposition}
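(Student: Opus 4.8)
The plan is to reduce everything to the representable-presheaf level and to an unwinding of the cyclic structure, then invoke the colimit-preservation arguments already established. First I would address the second equivalence $k_\ast c_\ast i^\ast\mathcal{Y}\cong k_\ast i^\ast r_\ast\mathcal{Y}$: this is immediate from Proposition \ref{PropSquare}, applied after taking the product of that square with $\Lambda^\text{op}$. Indeed, the natural equivalence $i^\ast r_\ast\cong c_\ast i^\ast$ of functors $\text{cTop}\rightarrow\text{Top}$ base-changes along $\Lambda^\text{op}$ to give $i^\ast r_\ast\cong c_\ast i^\ast$ as functors $\mathcal{P}(\Lambda\times\Lambda^\text{op})\rightarrow\mathcal{P}(\Lambda^\text{op})$ (here $i,r,c$ denote the evident maps in the displayed diagram, and the $\Lambda^\text{op}$-factor is untouched), and then one postcomposes with $k_\ast$. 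So the content is entirely in the first equivalence $\mathcal{O}_\text{thh}\cong k_\ast c_\ast i^\ast\mathcal{Y}$.

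For that, by Remark \ref{RmkBar} we have $\mathcal{O}_\text{thh}\cong k_\ast\overline{\mathcal{O}}_\text{thh}$, so it suffices to identify $\overline{\mathcal{O}}_\text{thh}\in\mathcal{P}(\text{Bypass}_\ast^\text{Eul})$ with $c_\ast i^\ast\mathcal{Y}$. Using the identification $\text{Bypass}_\ast^\text{Eul}\cong\Lambda^\text{op}$ of Lemma \ref{LemLambda} (and recalling $c_\ast$ is geometric realization of the underlying simplicial object, $c:\Delta\times\Lambda^\text{op}\to\Lambda^\text{op}$ projecting away the $\Delta$-factor), both sides are geometric realizations of cyclic presheaves on $\Lambda^\text{op}$, so it is enough to exhibit an equivalence of the underlying cyclic objects $(\overline{\mathcal{O}}_\text{thh})_\bullet\cong (i^\ast\mathcal{Y})_\bullet:\Lambda^\text{op}\to\mathcal{P}(\Lambda^\text{op})$ compatible with the cyclic structure. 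Level-wise, $(i^\ast\mathcal{Y})_n=\mathcal{Y}(\mathbb{T}_n,-)=\text{Map}_\Lambda(\mathbb{T}_n,-)$, regarded as a presheaf on $\Lambda^\text{op}$; this is the representable presheaf at $\mathbb{T}_n$. On the other side, $(\overline{\mathcal{O}}_\text{thh})_n=\coprod_{X_0,\ldots,X_n\in\ast}[X_0,\ldots,X_n,X_0]$, which for $S=\ast$ is just the single object $[X_0,\ldots,X_n,X_0]$, i.e. the graph on one vertex with $n+1$ edges equipped with its canonical Eulerian tour — and under $\text{Bypass}_\ast^\text{Eul}\cong\Lambda^\text{op}$ this is exactly $\mathbb{T}_n$. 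So level-wise both cyclic objects are the representable presheaf at $\mathbb{T}_n$.

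The remaining point — and this is where I expect the real work to be — is to check that the two cyclic structures agree, i.e. that for $f:\mathbb{T}_m\to\mathbb{T}_n$ in $\Lambda$ the induced map on $(\overline{\mathcal{O}}_\text{thh})$ (the bypass operation described just before Definition \ref{DefTHH}, replacing each composite $e_{i1}\cdots e_{ik}$ by a single edge, lifted to Eulerian tours) corresponds under the Yoneda identification to the representable map $\mathcal{Y}(f,-):\mathbb{T}_n\to\mathbb{T}_m$ in $\mathcal{P}(\Lambda^\text{op})$. This is a matter of matching the explicit description of $k$ and $i_e$ from Lemma \ref{LemLambda} (which sends a bypass operation $\Gamma\to\Gamma'$ to the map $f^\ast$ on cyclic sets of edges "remembering the tour by its vertices") against the description of the cyclic structure maps of $\overline{\mathcal{O}}_\text{thh}$; both are phrased in terms of "split each elementary edge of the source according to how $f$ acts," so they should be literally the same map, but verifying this requires carefully chasing the conventions for how $i_v$ versus $i_e$ treat morphisms (one covariant, one contravariant) and confirming that the canonical Eulerian tour on $[X_0,\ldots,X_n,X_0]$ is the one matching $\mathbb{T}_n$ under $i_e$. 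Since all the categories in sight are ordinary $1$-categories, this is a finite, if fiddly, verification rather than a homotopy-coherence issue. Once the cyclic objects are identified, applying $c_\ast$ (geometric realization) and then $k_\ast$ gives $\mathcal{O}_\text{thh}\cong k_\ast c_\ast i^\ast\mathcal{Y}$, completing the proof.
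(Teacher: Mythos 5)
Your proposal is correct and follows essentially the same route as the paper: identify $(\overline{\mathcal{O}}_\text{thh})_\bullet$ with the Yoneda embedding $\Lambda^\text{op}\to\mathcal{P}(\Lambda^\text{op})$ via Lemma \ref{LemLambda}, realize, apply $k_\ast$, and use Proposition \ref{PropSquare} (times $\Lambda^\text{op}$) for the second equivalence. The paper's proof is terser — it asserts the Yoneda identification directly rather than flagging the cyclic-structure check as work to be done — but the content is identical, and your attribution of the second equivalence to Proposition \ref{PropSquare} is arguably cleaner than the paper's citation of Lemma \ref{LemLambda} at that step.
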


\begin{proof}
If $S=\ast$, then as in Remark \ref{RmkBar}, $(\overline{\mathcal{O}}_\text{thh})_n=[\ast,\cdots,\ast]$, which is $\mathbb{T}_n$ via the identification $\text{Bypass}_\ast^\text{Eul}\cong\Lambda^\text{op}$. Indeed, $(\overline{\mathcal{O}}_\text{thh})_\bullet$ is identified with the Yoneda embedding $$\Lambda^\text{op}\rightarrow\mathcal{P}(\Lambda^\text{op})\cong\mathcal{P}(\text{Bypass}_\ast^\text{Eul}).$$ Since $\overline{\mathcal{O}}_\text{thh}$ is the geometric realization, then $$\overline{\mathcal{O}}_\text{thh}\cong c_\ast i^\ast\mathcal{Y}.$$ By Lemma \ref{LemLambda}, we also have $$\overline{\mathcal{O}}_\text{thh}\cong i^\ast r_\ast\mathcal{Y}.$$ Since $\mathcal{O}_\text{thh}\cong k_\ast\overline{\mathcal{O}}_\text{thh}$ (Remark \ref{RmkBar}), the proposition follows.
\end{proof}

\begin{corollary}\label{CorBdl}
There is a canonical $S^1$-action on $\overline{\mathcal{O}}_\text{thh}$, and $(\overline{\mathcal{O}}_\text{thh})_{hS^1}\cong\ast$, the constant presheaf on $\text{Bypass}_\ast^\text{Eul}$.
\end{corollary}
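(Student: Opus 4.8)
The plan is to deduce both assertions directly from Proposition \ref{PropOthh} by exploiting the identification $\overline{\mathcal{O}}_\text{thh}\cong i^\ast r_\ast\mathcal{Y}$, which exhibits $\overline{\mathcal{O}}_\text{thh}$ as the \emph{underlying space} of an object coming from $BS^1$. Concretely, working in $\mathcal{P}(\text{Bypass}_\ast^\text{Eul})\cong\mathcal{P}(\Lambda^\text{op})$, we have the functors $\Lambda\xrightarrow{r}BS^1\xrightarrow{j}\ast$, and we may base-change everything by $\Lambda^\text{op}$ as in Proposition \ref{PropSquare}. The presheaf $r_\ast\mathcal{Y}\in\mathcal{P}(BS^1\times\Lambda^\text{op})$ is, by the universal property of presheaves over $BS^1$, precisely a presheaf on $\Lambda^\text{op}$ equipped with an $S^1$-action; and $i^\ast$ (pullback along $\ast\times\Lambda^\text{op}\to BS^1\times\Lambda^\text{op}$) forgets that action. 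So the $S^1$-action on $\overline{\mathcal{O}}_\text{thh}$ is the one transported from $r_\ast\mathcal{Y}$, and $(\overline{\mathcal{O}}_\text{thh})_{hS^1}\cong j_\ast r_\ast\mathcal{Y}$ where now $j$ is base-changed by $\Lambda^\text{op}$.

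The first step, then, is to make precise the statement that for any $\infty$-category $\mathcal{D}$, a presheaf on $BS^1\times\mathcal{D}$ is the same data as an object of $\mathcal{P}(\mathcal{D})$ with an $S^1$-action, with $i^\ast$ forgetting the action and $j_\ast$ computing homotopy orbits; this is just Example \ref{ExColim} and the fact that $BS^1\times\mathcal{D}\to\mathcal{D}$ is (in the relevant fibered sense) the $S^1$-action datum, so $j_\ast$ along it is $(-)_{hS^1}$ fiberwise. I would phrase this as: the composite $j r:\Lambda\to\ast$ has $(jr)_\ast = j_\ast r_\ast$, and $(jr)_\ast$ base-changed by $\Lambda^\text{op}$ computes $\text{colim}_{\Lambda^\text{op}}$ applied levelwise --- but actually the cleaner route is to observe $j_\ast r_\ast\mathcal{Y} \cong (jr)_\ast\mathcal{Y}$ and identify $jr:\Lambda\to\ast$ with the classifying-space map, so this is $|\Lambda|$-indexed, giving $\ast$ via the Proposition that $|\Lambda|\cong BS^1$ --- no, more carefully: we want $(jr)_\ast\mathcal{Y}$ where $\mathcal{Y}$ is the Yoneda presheaf on $\Lambda\times\Lambda^\text{op}$.

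So the second, substantive step is the computation $(jr)_\ast\mathcal{Y}\cong\ast$ (the terminal/constant presheaf on $\Lambda^\text{op}$). Here I would use that $\mathcal{Y}\in\mathcal{P}(\Lambda\times\Lambda^\text{op})$ is the Yoneda presheaf, so pushing forward along the first factor $\Lambda\to\ast$ computes, fiberwise over $\Lambda^\text{op}$, the colimit over $\Lambda^\text{op}$ of the representables $\text{Map}(-,Y)$ --- and the colimit of the Yoneda embedding is always the terminal presheaf (equivalently, $\text{colim}_{\Lambda^\text{op}}\text{Map}(-,Y)\cong|\Lambda^\text{op}/Y|\cong\ast$ since $\Lambda^\text{op}/Y$ has a terminal object). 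Thus each fiber is contractible and $(jr)_\ast\mathcal{Y}\cong\ast$, the constant presheaf on $\text{Bypass}_\ast^\text{Eul}$. Combining, $(\overline{\mathcal{O}}_\text{thh})_{hS^1}\cong j_\ast r_\ast\mathcal{Y}\cong(jr)_\ast\mathcal{Y}\cong\ast$.

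The main obstacle is bookkeeping rather than mathematical depth: one must be careful that the base-change by $\Lambda^\text{op}$ in Proposition \ref{PropOthh} is compatible with forming left adjoints (i.e.\ that $r_\ast$ and $j_\ast$ base-changed by $\Lambda^\text{op}$ really are the pushforwards along $r\times\text{id}$ and $j\times\text{id}$), and that $i^\ast r_\ast$ applied to the \emph{presheaf} $\mathcal{Y}$ (not just to representables) genuinely exhibits a forgotten $S^1$-action --- this is the content of Proposition \ref{PropSquare} together with the observation that $r_\ast\mathcal{Y}$, being an object of $\mathcal{P}(BS^1\times\Lambda^\text{op})$, carries an $S^1$-action by construction. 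I expect the proof to be short: essentially two sentences invoking Proposition \ref{PropOthh} and Example \ref{ExColim}, plus the one-line remark that the colimit of a Yoneda embedding is terminal.
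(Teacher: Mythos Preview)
Your proposal is correct and is precisely the unpacking of the paper's one-line proof, which reads in full: ``This follows from Proposition \ref{PropOthh} and Corollary \ref{ColimLambda}.'' The $S^1$-action comes from the identification $\overline{\mathcal{O}}_\text{thh}\cong i^\ast r_\ast\mathcal{Y}$ of Proposition \ref{PropOthh}, and Corollary \ref{ColimLambda} (applied fiberwise over $\Lambda^\text{op}$) then gives $(\overline{\mathcal{O}}_\text{thh})_{hS^1}\cong\text{colim}_{\Lambda^\text{op}}\text{Map}(-,Y)$, which is contractible because representable presheaves have contractible colimit---exactly the step you isolate.
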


\begin{proof}
This follows from Proposition \ref{PropOthh} and Corollary \ref{ColimLambda}.
\end{proof}

\noindent We will now turn to the first of our main results from the introduction. We can identify presheaves on $BS^1$ with $S^1$-equivariant spaces: $$\mathcal{P}(BS^1)\cong\mathcal{P}((BS^1)^\text{op})\cong\text{Top}^{S^1}.$$ Write $S^1\in\mathcal{P}(S^1)$ for the torsor (that is, $S^1$ acting freely on itself).

\begin{theorem}\label{ThmCase1}
Let $k:\Lambda^\text{op}\cong\text{Bypass}_\ast^\text{Eul}\rightarrow\text{Bypass}_\ast$ denote the forgetful functor and $r:\Lambda^\text{op}\rightarrow|\Lambda^\text{op}|\cong BS^1$. Then $$\mathcal{O}_\text{thh}\cong k_\ast r^\ast(S^1).$$
\end{theorem}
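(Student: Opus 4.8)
The plan is to reduce $\mathcal{O}_\text{thh} \cong k_\ast r^\ast(S^1)$ to the statement $\overline{\mathcal{O}}_\text{thh} \cong r^\ast(S^1)$ of cyclic presheaves on $\text{Bypass}_\ast^\text{Eul} \cong \Lambda^\text{op}$, since $\mathcal{O}_\text{thh} \cong k_\ast \overline{\mathcal{O}}_\text{thh}$ by Remark \ref{RmkBar} and $k_\ast$ is functorial. So everything happens on $\Lambda^\text{op}$, and the identification $r : \Lambda^\text{op} \to BS^1$ is the classifying-space functor established at the end of Section 3.

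First I would use Proposition \ref{PropOthh}, which already gives $\overline{\mathcal{O}}_\text{thh} \cong i^\ast r_\ast \mathcal{Y}$, where $\mathcal{Y} \in \mathcal{P}(\Lambda \times \Lambda^\text{op})$ is the Yoneda pairing and the $r_\ast$ and $i^\ast$ here act on the first factor (so the output lives in $\mathcal{P}(BS^1 \times \Lambda^\text{op})$, then restricts to $\mathcal{P}(\Delta \times \Lambda^\text{op})$ and geometrically realizes). The key computation is therefore to identify $r_\ast \mathcal{Y} \in \mathcal{P}(BS^1 \times \Lambda^\text{op})$. I expect that pushing the Yoneda pairing forward along $r$ in the first variable produces exactly the $S^1$-torsor $S^1 \in \mathcal{P}(BS^1)$ in the first coordinate, with the representable $\Lambda^\text{op}$-presheaf in the second coordinate — in other words, that $r_\ast \mathcal{Y}$ corepresents $r^\ast$ in the sense that for any cyclic space $X$, $\langle r_\ast \mathcal{Y}, X\rangle_{\Lambda^\text{op}}$ recovers $r_\ast X$. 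Concretely: $r_\ast \mathcal{Y}(-, Y) \cong r_\ast(\text{Map}_\Lambda(-, Y)) = r_\ast(Y)$ as a presheaf on $BS^1$, and since $Y \in \Lambda$ has $|Y| \cong S^1$ with free action by Lemma \ref{LemDHK}, one gets $r_\ast(Y) \cong S^1$ as an $S^1$-space, naturally in $Y \in \Lambda^\text{op}$. This naturality is the content that packages $r_\ast \mathcal{Y}$ as $S^1 \boxtimes (\text{id}_{\Lambda^\text{op}})$, i.e. the external product of the torsor $S^1 \in \mathcal{P}(BS^1)$ with the identity presheaf. Then $i^\ast$ applied in the first variable forgets the $S^1$-action fiberwise, and the geometric realization $c_\ast$ reassembles it; but by Proposition \ref{PropSquare} the composite $c_\ast i^\ast$ of the first factor equals $i^\ast r_\ast$, so we directly get $\overline{\mathcal{O}}_\text{thh}$ as the pullback $r^\ast(S^1)$ along $r : \Lambda^\text{op} \to BS^1$, where now I am reading $r^\ast$ as precomposition $\mathcal{P}(BS^1) \to \mathcal{P}(\Lambda^\text{op})$ (not to be confused with the $r_\ast$ on the $\Lambda$-factor above).

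I would then finish by applying $k_\ast$: from $\overline{\mathcal{O}}_\text{thh} \cong r^\ast(S^1)$ and $\mathcal{O}_\text{thh} \cong k_\ast \overline{\mathcal{O}}_\text{thh}$ we conclude $\mathcal{O}_\text{thh} \cong k_\ast r^\ast(S^1)$, which is the claim. The main obstacle is the bookkeeping in the middle step: there are two different operations called "$r$" floating around — the pushforward $r_\ast$ on presheaves over $\Lambda$ (integrating the Yoneda pairing) and the pullback $r^\ast$ along $r:\Lambda^\text{op}\to BS^1$ in the final formula — and one must be careful that the $S^1$-torsor produced by the former, with its naturality in the $\Lambda^\text{op}$-variable, is precisely what $r^\ast$ of the torsor yields. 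Establishing that $r_\ast(Y) \cong S^1$ naturally in $Y$, as $S^1$-equivariant spaces and not merely as underlying spaces, is where the real work sits; it uses Lemma \ref{LemDHK} together with the fact that a $\Lambda$-natural family of free $S^1$-spaces over a point, each equivalent to $S^1$, is classified by a map to $BS^1$ which is exactly $r$. Everything else is formal manipulation of the adjunctions $(r_\ast, r^\ast)$, $(k_\ast, k^\ast)$, $(c_\ast, c^\ast)$ and the colimit-preservation already invoked in Proposition \ref{PropSquare}.
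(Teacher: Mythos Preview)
Your overall strategy---reduce to $\overline{\mathcal{O}}_\text{thh}\cong r^\ast(S^1)$ via Remark~\ref{RmkBar}, then compute directly from $\overline{\mathcal{O}}_\text{thh}\cong i^\ast r_\ast\mathcal{Y}$---is sound and genuinely different from the paper's. The paper does not attempt to identify $r_\ast\mathcal{Y}$ directly. Instead it proves a classification result (Lemma~\ref{LemCase1}): any $S^1$-equivariant presheaf on $\Lambda^\text{op}$ with contractible orbits is $r^\ast(S^1_{(n)})$ for some degree $n$, and $n=\pm 1$ is forced once one shows $\mathrm{colim}_\Lambda(\overline{\mathcal{O}}_\text{thh})\cong\ast$. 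The paper then verifies this colimit via a separate push-pull computation on $\mathcal{Y}$. Your route bypasses both the classification lemma and the colimit computation by observing that $r_\ast$ on representables \emph{is} $r$, so the family $Y\mapsto r_\ast(Y)$ is literally $r$ followed by Yoneda---which is exactly the data needed.

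That said, there is one wrong statement and one genuine loose end. The wrong statement: $r_\ast\mathcal{Y}$ is \emph{not} an external product $S^1\boxtimes(\text{anything})$. As a functor $\Lambda\to\mathcal{P}(BS^1)$ it is $Y\mapsto\text{Map}_{BS^1}(-,r(Y))$, which couples the two variables nontrivially through $r$; an external product would be constant in $Y$. Drop that description and just say $r_\ast\mathcal{Y}$ is the Yoneda extension of $r:\Lambda\to BS^1$. The loose end is the one you flag yourself: you compute $i^\ast r_\ast\mathcal{Y}$ as the functor $\Lambda\to\text{Top}$ given by $Y\mapsto\text{Map}_{BS^1}(\ast,r_1(Y))$ with $r_1:\Lambda\to BS^1$ from Proposition~\ref{PropSquare}, whereas $r^\ast(S^1)$ in the theorem is built from $r_2:\Lambda^\text{op}\to BS^1$ and unwinds to $Y\mapsto\text{Map}_{BS^1}(r_2^\text{op}(Y),\ast)$. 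Matching these requires identifying $r_1$ with $r_2^\text{op}$ under the canonical equivalence $BS^1\simeq(BS^1)^\text{op}$; since both are localization maps this holds up to an automorphism of $BS^1$, i.e.\ up to degree $\pm 1$, and $S^1_{(1)}\simeq S^1_{(-1)}$ absorbs the sign. This step is short but must be said---without it your argument establishes only that $\overline{\mathcal{O}}_\text{thh}$ is \emph{some} $r^\ast(S^1_{(n)})$, which is precisely where the paper's degree computation enters.
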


\begin{lemma}\label{LemCase1}
Suppose $\mathcal{O}\in\mathcal{P}(\Lambda^\text{op})$ has an $S^1$-action for which $\mathcal{O}_{hS^1}\cong\ast$ is the constant presheaf.

If $\text{colim}_\Lambda(\mathcal{O})\cong\ast$, then $\mathcal{O}\cong r^\ast(S^1)$.
\end{lemma}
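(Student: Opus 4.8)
The plan is to analyze the category $\mathcal{P}(\Lambda^\text{op})$ of cyclic spaces through its relationship with $S^1$-equivariant spaces, and to show that the two hypotheses pin down $\mathcal{O}$ up to equivalence. Recall that $\mathcal{P}(BS^1)\cong\text{Top}^{S^1}$, and the functor $r:\Lambda^\text{op}\rightarrow BS^1$ induces an adjoint pair $r_\ast:\mathcal{P}(\Lambda^\text{op})\leftrightarrows\mathcal{P}(BS^1):r^\ast$. Since $r$ exhibits $BS^1$ as the classifying space of $\Lambda^\text{op}$ (that is, $r$ is a localization at all morphisms, equivalently $|\Lambda^\text{op}|\cong BS^1$), the functor $r^\ast$ is fully faithful: it embeds $\text{Top}^{S^1}$ as the full subcategory of $\mathcal{P}(\Lambda^\text{op})$ of presheaves that invert every morphism of $\Lambda^\text{op}$. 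So the first step is to observe that $r^\ast$ identifies $S^1$-spaces with ``locally constant'' cyclic spaces, and that $r_\ast$ is the corresponding localization (left adjoint), with unit $\mathcal{O}\rightarrow r^\ast r_\ast\mathcal{O}$.

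Next I would unwind what the two hypotheses say in $\text{Top}^{S^1}$. An $S^1$-action on $\mathcal{O}$, together with $\mathcal{O}_{hS^1}\cong\ast$, should say precisely that the $S^1$-equivariant space $r_\ast\mathcal{O}$ has underlying space $S^1$ with the free translation action — because one computes, as in Corollary \ref{ColimLambda} and Proposition \ref{PropSquare}, that $|\mathcal{O}|\cong j_\ast r_\ast\mathcal{O}$ where $j:BS^1\rightarrow\ast$, so the condition $\text{colim}_\Lambda(\mathcal{O})\cong\mathcal{O}_{hS^1}\cong\ast$... wait, more carefully: the hypothesis $\mathcal{O}_{hS^1}\cong\ast$ is about the $S^1$-action carried by $\mathcal{O}$ \emph{as a presheaf}, while $\text{colim}_\Lambda(\mathcal{O})$ is the global colimit. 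Using the identification of Corollary \ref{ColimLambda}, $\text{colim}_\Lambda(\mathcal{O})$ is the $S^1$-homotopy-orbits of the geometric realization $|i^\ast\mathcal{O}|$; so the two hypotheses together should force $|i^\ast\mathcal{O}|\cong S^1$ with free action. The cleanest route: show $r_\ast\mathcal{O}\in\text{Top}^{S^1}$ is a free $S^1$-space with one orbit, i.e. $r_\ast\mathcal{O}\cong S^1$, using that (a) its $S^1$-orbits are $j_\ast r_\ast\mathcal{O}\cong\text{colim}_{\Lambda}(\mathcal{O})\cong\ast$, which identifies $r_\ast\mathcal{O}$ with an $S^1$-torsor (free, transitive), and (b) there is exactly one such torsor up to equivalence, namely $S^1$. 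Then $r^\ast r_\ast\mathcal{O}\cong r^\ast(S^1)$.

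Then I would need the final step: the unit map $\mathcal{O}\rightarrow r^\ast r_\ast\mathcal{O}$ is an equivalence. This is where the second hypothesis on the $S^1$-action enters essentially, and I expect this to be the main obstacle. The point is that $r^\ast$ is fully faithful onto locally constant presheaves, so it suffices to show $\mathcal{O}$ itself is locally constant — i.e. $\mathcal{O}$ inverts all morphisms of $\Lambda^\text{op}$ — and this must be exactly what an $S^1$-action (in the appropriate presheaf-level sense) together with $\mathcal{O}_{hS^1}\cong\ast$ buys us. Concretely: an $S^1$-action on the presheaf $\mathcal{O}$ with $\mathcal{O}_{hS^1}$ constant means $\mathcal{O}$ is pulled back from a presheaf on $BS^1$ along... but we must be careful not to assume the conclusion. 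I think the right formulation is: having an ``$S^1$-action'' on a cyclic space already means (by the discussion preceding the lemma and the way it is used) a lift of $\mathcal{O}$ along $r^\ast$ up to the orbit data, and $\mathcal{O}_{hS^1}\cong\ast$ pins the orbit space; the content of the lemma is then that these data assemble uniquely. So the proof reduces to: (i) identify the moduli of $S^1$-spaces $Z$ with $Z_{hS^1}\cong\ast$ — this moduli is a single point, $Z\cong S^1$, since such $Z$ is an $S^1$-torsor over $\ast$ and $BS^1$ is connected; (ii) transport back along the fully faithful $r^\ast$; (iii) check the hypothesis $\text{colim}_\Lambda(\mathcal{O})\cong\ast$ is what guarantees $\mathcal{O}$ lies in the image of $r^\ast$ at all, via $\text{colim}_\Lambda(\mathcal{O})\cong j_\ast r_\ast\mathcal{O}$ together with the (presumed, from the setup) fact that an $S^1$-equivariant cyclic space with constant orbits is $r^\ast$ of its value. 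I would make precise this last fact as the technical heart — likely it follows because $\mathcal{O}_{hS^1}\cong\ast$ means the ``quotient'' cyclic space is constant, and a cyclic space that is an $S^1$-bundle over a constant base is the pullback of the bundle.
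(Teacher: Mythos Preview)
Your proposal contains the right ingredients, and once cleaned up it is essentially the paper's argument, but your summary in (iii) has the roles of the two hypotheses tangled. It is the \emph{first} hypothesis (the auxiliary $S^1$-action with $\mathcal{O}_{hS^1}\cong\ast$) that places $\mathcal{O}$ in the essential image of $r^\ast$, not the colimit condition. The argument is exactly the one the paper gives: viewing $\mathcal{O}$ as a functor $\Lambda\rightarrow\text{Top}^{S^1}$, the condition $\mathcal{O}_{hS^1}\cong\ast$ says pointwise that $\mathcal{O}$ lands in the full subgroupoid $BS^1\subseteq\text{Top}^{S^1}$ of torsors, and any functor into a groupoid factors through $|\Lambda|\cong BS^1$. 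This yields $\mathcal{O}\cong r^\ast(S^1_{(n)})$ for some $n\in\mathbb{Z}$ --- not yet $r^\ast(S^1)$. You correctly flag this step as the ``technical heart,'' but it is not something to be ``presumed from the setup''; it is the entire content of the first hypothesis, and it is short once you say it this way.

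The \emph{second} hypothesis is what then pins down $n$. Your route here is actually slightly slicker than the paper's: you observe $\text{colim}_\Lambda(\mathcal{O})\cong j_\ast r_\ast\mathcal{O}\cong(r_\ast\mathcal{O})_{hS^1}$, so $\text{colim}_\Lambda(\mathcal{O})\cong\ast$ forces $r_\ast\mathcal{O}$ to be the free $S^1$-torsor, and hence (using the first step) $\mathcal{O}\cong r^\ast r_\ast\mathcal{O}\cong r^\ast(S^1)$. The paper instead runs a short case analysis, computing $c_\ast(r^\ast S^1_{(n)})$ separately for $n=0$ and for $|n|\geq 1$ to rule out $n\neq\pm 1$. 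Both finish the job; yours avoids the casework, but only once you have the attributions straight.
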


\begin{proof}[Proof of lemma]
Since $\mathcal{O}$ has an $S^1$-action, it may be regarded as a functor $\mathcal{O}:\Lambda\rightarrow\text{Top}^{S^1}$ to $S^1$-spaces. Since $\mathcal{O}_{hS^1}\cong\ast$, it lands in the full subcategory spanned by the torsor $S^1$, which is equivalent to the $\infty$-groupoid $BS^1$. Hence we have $$\mathcal{O}:\Lambda\rightarrow BS^1\subseteq\text{Top}^{S^1}.$$ Since $BS^1$ is an $\infty$-groupoid, $\mathcal{O}$ factors through $|\Lambda|\cong BS^1$, as $$\Lambda\xrightarrow{r}BS^1\xrightarrow{f}BS^1\subseteq\text{Top}^{S^1}$$ for some $f:BS^1\rightarrow BS^1$. If $f$ is the degree $n$ map (multiplication by $n$ on $\pi_2\cong\mathbb{Z}$), then $\mathcal{O}\cong r^\ast(S^1_{(n)})$, where $S^1_{(n)}$ denotes the circle with $S^1$-action $$\theta\cdot z=\theta^n z.$$ Note that $S^1_{(n)}\cong S^1_{(-n)}$.

Let $c$ denote the functor $\Lambda\rightarrow\ast$ and $\mathcal{O}_n=r^\ast S^1_{(n)}$. We are to prove the following: If $c_\ast\mathcal{O}_n=\text{colim}_\Lambda(\mathcal{O}_n)\cong\ast$, then $n=\pm 1$.

First we show $n\neq 0$. Indeed, since $r^\ast$ has a right adjoint (right Kan extension), it preserves colimits, so $$\mathcal{O}_0\cong S^1 \otimes r^\ast(\ast),$$ which is the constant presheaf $S^1$. Thus $$c_\ast\mathcal{O}_0\cong S^1\times c_\ast(\ast)\cong S^1\times BS^1\neq\ast.$$ Thus $n\neq 0$. Moreover, since $\mathcal{O}_n\cong\mathcal{O}_{-n}$, assume $n>0$. Since $r^\ast$ preserves colimits, $\mathcal{O}_n\cong(\mathcal{O}_1)_{hC_n}$, with $C_n$ the cyclic subgroup of $S^1$ of order $n$. Hence $$\ast\cong c_\ast\mathcal{O}_n\cong c_\ast(\mathcal{O}_1)_{hC_n}.$$ As a left adjoint functor, $c_\ast$ also preserves colimits, so $c_\ast(\mathcal{O}_1)\cong C_n$.

But we also know that $(\mathcal{O}_1)_{hS^1}\cong r^\ast(\ast)\cong\ast$, the constant presheaf, so $$c_\ast(\mathcal{O}_1)_{hS^1}\cong BS^1.$$ Given that $c_\ast(\mathcal{O}_1)\cong C_n$, we know $(C_n)_{hS^1}\cong BS^1$, which implies $n=1$.
\end{proof}

\begin{proof}[Proof of Theorem \ref{ThmCase1}]
By Corollary \ref{CorBdl}, $(\overline{\mathcal{O}}_\text{thh})_{hS^1}\cong\ast$. By the lemma, we need only show that $\text{colim}(\overline{\mathcal{O}}_\text{thh})\cong\ast$. Consider the diagram $$\xymatrix{
\Delta\times\Lambda^\text{op}\ar[r]^c\ar[d]_i\ar[rd]^p &\Lambda^\text{op}\ar[r]^s &\ast \\
\Lambda\times\Lambda^\text{op}\ar[rd]_p &\Delta\ar[ru]_t\ar[d]^i & \\
&\Lambda &
}$$ Then $$\text{colim}(\overline{\mathcal{O}}_\text{thh})\cong s_\ast\overline{\mathcal{O}}_\text{thh}\cong s_\ast c_\ast i^\ast\mathcal{Y}\cong t_\ast p_\ast i^\ast\mathcal{Y}\cong t_\ast i^\ast p_\ast\mathcal{Y}.$$ But $p_\ast\mathcal{Y}(\mathbb{T}_n)\cong|(\text{Map}(\mathbb{T}_n,-))|_{hS^1}$ by Corollary \ref{ColimLambda}, which is $(S^1)_{hS^1}\cong\ast$ by Lemma \ref{LemDHK}. Hence $p_\ast\mathcal{Y}\cong\ast$, the constant presheaf, so $i^\ast p_\ast\mathcal{Y}\cong\ast$ is the constant simplicial space, which has contractible geometric realization.

The hypothesis of the lemma follows, so by the lemma $\overline{\mathcal{O}}_\text{thh}\cong r^\ast(S^1)$, and therefore $$\mathcal{O}_\text{thh}=k_\ast\overline{\mathcal{O}}_\text{thh}\cong k_\ast r^\ast(S^1).$$
\end{proof}

\section{THH of enriched categories}
\noindent Now we will identify $\mathcal{O}_\text{thh}\in\mathcal{P}(\text{Bypass}_S)$ for an arbitrary set $S$, generalizing Theorem \ref{ThmCase1} to Theorem \ref{ThmCase2}.

Let $r$ denote the composite $$\text{Bypass}_S^\text{Eul}\xrightarrow{F}\text{Bypass}_\ast^\text{Eul}\rightarrow|\text{Bypass}_\ast^\text{Eul}|\cong BS^1,$$ where $F$ forgets the labeling of the vertices. (We will later prove that $r$ exhibits $BS^1$ as the classifying space of $\text{Bypass}_S^\text{Eul}$; see Remark \ref{RmkBS1}.)

\begin{theorem}\label{ThmCase2}
Let the functor $r:\text{Bypass}_S^\text{Eul}\rightarrow BS^1$ be as above, and write $k:\text{Bypass}_S^\text{Eul}\rightarrow\text{Bypass}_S$ for the forgetful functor. Then $$\mathcal{O}_\text{thh}\cong k_\ast r^\ast(S^1).$$
\end{theorem}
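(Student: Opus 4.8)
The plan is to mimic the proof of Theorem \ref{ThmCase1}, replacing the identification $\text{Bypass}_\ast^\text{Eul}\cong\Lambda^\text{op}$ (which is no longer literally available when $S\neq\ast$) by the functor $F:\text{Bypass}_S^\text{Eul}\rightarrow\text{Bypass}_\ast^\text{Eul}$ that forgets the vertex-labeling. As in Remark \ref{RmkBar}, the cyclic bar construction lifts to a cyclic presheaf $(\overline{\mathcal{O}}_\text{thh})_\bullet:\Lambda^\text{op}\rightarrow\mathcal{P}(\text{Bypass}_S^\text{Eul})$ with geometric realization $\overline{\mathcal{O}}_\text{thh}$, and $\mathcal{O}_\text{thh}\cong k_\ast\overline{\mathcal{O}}_\text{thh}$; so it suffices to prove $\overline{\mathcal{O}}_\text{thh}\cong r^\ast(S^1)$ in $\mathcal{P}(\text{Bypass}_S^\text{Eul})$.

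First I would check that $\overline{\mathcal{O}}_\text{thh}$ is pulled back from $\text{Bypass}_\ast^\text{Eul}$ along $F$. Concretely, $(\overline{\mathcal{O}}_\text{thh})_n=\coprod_{X_0,\ldots,X_n\in S}[X_0,\ldots,X_n,X_0]$, and applying $F$ to the representable $[X_0,\ldots,X_n,X_0]$ always yields the same object $\mathbb{T}_n\in\text{Bypass}_\ast^\text{Eul}\cong\Lambda^\text{op}$, independent of the labels — the index set of the coproduct being precisely the fiber $F^{-1}$ over that object on $n$-simplices. This should identify $\overline{\mathcal{O}}_\text{thh}$ with $F_!(\ast)$, the left Kan extension along $F$ of the terminal presheaf — equivalently, via the push-pull notation of the paper, with $F^\ast$ of nothing and instead requires a small argument that the cyclic structure maps match. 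The cleanest route is probably: the cyclic presheaf $(\overline{\mathcal{O}}_\text{thh})_\bullet$ is $F$ applied levelwise to the cyclic presheaf computing $\overline{\mathcal{O}}_\text{thh}$ in the $S=\ast$ case (which by Proposition \ref{PropOthh} is $c_\ast i^\ast\mathcal{Y}\cong i^\ast r_\ast \mathcal{Y}$), together with the observation that $F$ is a left fibration — indeed the combinatorial Lemma \ref{LemLambda}-style argument shows $\text{Bypass}_S^\text{Eul}$ is the Grothendieck construction of a presheaf on $\Lambda^\text{op}\cong\text{Bypass}_\ast^\text{Eul}$. From this I get $\overline{\mathcal{O}}_\text{thh}\cong F^\ast(\text{something})$ and in particular an $S^1$-action on $\overline{\mathcal{O}}_\text{thh}$ with $(\overline{\mathcal{O}}_\text{thh})_{hS^1}\cong\ast$, by pulling back Corollary \ref{CorBdl} along $F$ (pullback is exact, so it commutes with the homotopy orbits).

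Next I would invoke the appropriate generalization of Lemma \ref{LemCase1}: if $\mathcal{O}\in\mathcal{P}(\text{Bypass}_S^\text{Eul})$ has an $S^1$-action with $\mathcal{O}_{hS^1}\cong\ast$, then $\mathcal{O}$ factors as a functor $\text{Bypass}_S^\text{Eul}\rightarrow BS^1\subseteq\text{Top}^{S^1}$, hence through the classifying space $|\text{Bypass}_S^\text{Eul}|$; the content of the preliminary remark is that this classifying space is $BS^1$ via $r$, so $\mathcal{O}\cong r^\ast(S^1_{(n)})$ for some integer $n$, and one shows $n=\pm1$ exactly as in the lemma — the computation $\text{colim}(\mathcal{O}_0)\cong S^1\times|\text{Bypass}_S^\text{Eul}|\neq\ast$ rules out $n=0$, and the $C_n$-homotopy-orbit argument rules out $|n|>1$. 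So the remaining point is to compute $\text{colim}_{\text{Bypass}_S^\text{Eul}}(\overline{\mathcal{O}}_\text{thh})\cong\ast$. Since $\overline{\mathcal{O}}_\text{thh}$ is a geometric realization of representables, this is a colimit over $\Delta\times(\text{fiber data})$ that I would reorganize, exactly as in the proof of Theorem \ref{ThmCase1}, into the geometric realization of a simplicial space whose $n$-th term is a colimit of the form $p_\ast\mathcal{Y}(\mathbb{T}_n)\cong(S^1)_{hS^1}\cong\ast$ (using Corollary \ref{ColimLambda} and Lemma \ref{LemDHK}); the key simplification is that $F$ is a (left) fibration with fibers that are $\infty$-groupoids — or at least have trivial-enough classifying spaces — so that $F_!$ of the terminal object still has contractible colimit.

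The main obstacle is the second paragraph: making precise that $\overline{\mathcal{O}}_\text{thh}$ is genuinely pulled back (or left-Kan-extended) along $F$ in a way that respects the cyclic structure, and that $r$ really does exhibit $BS^1$ as $|\text{Bypass}_S^\text{Eul}|$. Both of these rest on the ``counting in two ways'' combinatorial lemma promised in the introduction — that a graph with Eulerian tour is the same as a cyclically ordered edge set with an $S$-labeling of vertices — which exhibits $\text{Bypass}_S^\text{Eul}$ simultaneously as a Grothendieck construction over $\text{Bypass}_S$ (giving the $k_\ast$ description) and over $\Lambda^\text{op}$ (giving the $F$ and the $S^1$-action). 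Once that structural input is in hand, the rest is a routine transport of the $S=\ast$ argument along $F$, with exactness of pullback doing the bookkeeping for the $S^1$-action and homotopy orbits.
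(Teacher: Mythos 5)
Your central reduction — that $\overline{\mathcal{O}}_\text{thh}^S$ is the pullback along the label-forgetting functor $F:\text{Bypass}_S^\text{Eul}\rightarrow\text{Bypass}_\ast^\text{Eul}$ of the corresponding presheaf over $S=\ast$ — is exactly the lemma the paper isolates and proves (by a direct combinatorial check: a map $\Gamma\rightarrow[X_0,\ldots,X_n,X_0]$ is an ``$n$-stop itinerary'' of the chosen Eulerian tour, and an itinerary is determined by the \emph{order} of its stops, independently of $S$). Your framing via the left-fibration structure of $F$ (Lemma \ref{L2}) is a nice conceptual variant, though it forward-references Section 7; but beware that your first formulation ``$\overline{\mathcal{O}}_\text{thh}\cong F_!(\ast)$'' is a type error — the left Kan extension $F_!=F_\ast$ lands in $\mathcal{P}(\text{Bypass}_\ast^\text{Eul})$, the wrong category — and you hedge without ever landing on the clean statement $\overline{\mathcal{O}}_\text{thh}^S\cong F^\ast\overline{\mathcal{O}}_\text{thh}^\ast$.

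Where you genuinely part ways with the paper is afterward, and here you take a much longer road than necessary. Once $\overline{\mathcal{O}}_\text{thh}^S\cong F^\ast\overline{\mathcal{O}}_\text{thh}^\ast$ is established, the theorem follows in one line: Theorem \ref{ThmCase1} gives $\overline{\mathcal{O}}_\text{thh}^\ast\cong r^\ast(S^1)$, and since $r$ on $\text{Bypass}_S^\text{Eul}$ is by definition the composite $r\circ F$, we get $\overline{\mathcal{O}}_\text{thh}^S\cong F^\ast r^\ast(S^1)\cong r^\ast(S^1)$. Instead, you re-run the entire Lemma \ref{LemCase1} argument in the new setting, which commits you to establishing $|\text{Bypass}_S^\text{Eul}|\cong BS^1$ (Corollary \ref{CorCS}, again a Section 7 fact) and to recomputing $\text{colim}(\overline{\mathcal{O}}_\text{thh}^S)\cong\ast$ from scratch. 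That last sketch is vague, and the closing justification — that ``$F_!$ of the terminal object still has contractible colimit'' — is false as stated: $\text{colim}(F_\ast(\ast))\cong|\text{Bypass}_S^\text{Eul}|\cong BS^1$, which is not contractible. The detour is probably patchable, but the right move is to use Theorem \ref{ThmCase1} as a black box once the reduction is in place, rather than rederiving its proof.
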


\noindent Notice that this precisely recovers Theorem \ref{ThmCase1} when $S=\ast$. In general, we will derive this theorem from the $S=\ast$ case by means of a lemma asserting (roughly) that THH does not depend on the ambient set $S$.

Recall the presheaf $\overline{\mathcal{O}}_\text{thh}\in\mathcal{P}(\text{Bypass}_S^\text{Eul})$ of Remark \ref{RmkBar}. In this section, we will use a superscript (as in $\overline{\mathcal{O}}_\text{thh}^S$) to emphasize we are working over $\text{Bypass}_S$ for a particular $S$.

\begin{lemma}
If $F:\text{Bypass}_S^\text{Eul}\rightarrow\text{Bypass}_\ast^\text{Eul}$ is the functor which forgets the labeling of vertices, then $F^\ast\overline{\mathcal{O}}_\text{thh}^\ast\cong\overline{\mathcal{O}}_\text{thh}^S$.
\end{lemma}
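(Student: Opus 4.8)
The plan is to unwind both sides of the claimed equivalence in terms of the explicit simplicial descriptions provided by Remark \ref{RmkBar}, and to check that the functor $F$ is compatible with these descriptions level by level. Recall that $\overline{\mathcal{O}}_\text{thh}^S$ is the geometric realization of the cyclic presheaf $(\overline{\mathcal{O}}_\text{thh}^S)_n=\coprod_{X_0,\ldots,X_n\in S}[X_0,X_1,\ldots,X_n,X_0]$ on $\text{Bypass}_S^\text{Eul}$, and likewise $\overline{\mathcal{O}}_\text{thh}^\ast$ is the geometric realization of $(\overline{\mathcal{O}}_\text{thh}^\ast)_n=[\ast,\ldots,\ast]=\mathbb{T}_n$ under the identification $\text{Bypass}_\ast^\text{Eul}\cong\Lambda^\text{op}$. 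Since $F^\ast$ is a right adjoint it need not commute with the geometric realization on the nose, but $F^\ast$ is \emph{precomposition} of a presheaf by $F$, so it preserves \emph{all} limits and colimits (it has both a left adjoint $F_!$ and a right adjoint $F_\ast$, being restriction along a functor of small categories); in particular it commutes with the geometric realization colimit. So it suffices to produce a natural equivalence of cyclic presheaves $F^\ast(\overline{\mathcal{O}}_\text{thh}^\ast)_\bullet\cong(\overline{\mathcal{O}}_\text{thh}^S)_\bullet$.

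The key point, carried out in each simplicial degree $n$, is the following combinatorial identity. The representable $[\ast,\ldots,\ast]=\mathbb{T}_n\in\text{Bypass}_\ast^\text{Eul}$, pulled back along $F$, is by definition the presheaf on $\text{Bypass}_S^\text{Eul}$ sending $\Gamma\mapsto\text{Map}_{\text{Bypass}_\ast^\text{Eul}}(F\Gamma,\mathbb{T}_n)$. On the other hand $(\overline{\mathcal{O}}_\text{thh}^S)_n=\coprod_{X_0,\ldots,X_n}[X_0,\ldots,X_n,X_0]$ sends $\Gamma\mapsto\coprod_{X_0,\ldots,X_n}\text{Map}_{\text{Bypass}_S^\text{Eul}}(\Gamma,[X_0,\ldots,X_n,X_0])$. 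So the claim reduces to: for every $\Gamma\in\text{Bypass}_S^\text{Eul}$, the map $F$ induces a bijection
\[
\coprod_{X_0,\ldots,X_n\in S}\text{Map}_{\text{Bypass}_S^\text{Eul}}(\Gamma,[X_0,\ldots,X_n,X_0])\;\xrightarrow{\ \sim\ }\;\text{Map}_{\text{Bypass}_\ast^\text{Eul}}(F\Gamma,\mathbb{T}_n).
\]
A morphism $F\Gamma\to\mathbb{T}_n$ in $\text{Bypass}_\ast^\text{Eul}$ is a bypass operation respecting the Eulerian tours; using the equivalence $\text{Bypass}_\ast^\text{Eul}\cong\Lambda^\text{op}$ and Lemma \ref{LemLambda}, such a morphism "remembers an Eulerian tour by its vertices", i.e. it assigns to each of the $n+1$ edges of $\mathbb{T}_n$ the vertex of $\Gamma$ at which the corresponding segment of $\Gamma$'s tour begins. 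This is precisely the data of a labeling $X_0,\ldots,X_n\in S$ together with a morphism $\Gamma\to[X_0,\ldots,X_n,X_0]$ in the \emph{labeled} category $\text{Bypass}_S^\text{Eul}$: the labels are forced to be the source vertices read off from $\Gamma$'s tour, and conversely any such labeled morphism forgets to the given morphism $F\Gamma\to\mathbb{T}_n$. Naturality in $\Gamma$ and compatibility with the cyclic (i.e. $\Lambda$) structure maps are then straightforward, since both the cyclic structure on $(\overline{\mathcal{O}}_\text{thh})_\bullet$ and the action of $F$ are described by the same edge-contraction/loop-insertion recipe.

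The main obstacle is making the bijection above precise and checking it is natural — essentially verifying carefully that $F$ is a \emph{fully faithful-on-fibers} type construction in the sense that labelings are uniquely determined by the Eulerian structure. This is the content of the "counting in two ways" Lemma promised for Section 7 (the equivalence between a graph in $\text{Bypass}_S$ with Eulerian tour and a cyclically ordered set of edges with an $S$-labeling of vertices), specialized to the representables $[X_0,\ldots,X_n,X_0]$; once that equivalence is in hand the present lemma is immediate. An alternative, slicker route that avoids the degreewise bookkeeping: observe that the lifted cyclic presheaf $(\overline{\mathcal{O}}_\text{thh})_\bullet$ on $\text{Bypass}_S^\text{Eul}$ is, degreewise, exactly $F^\ast$ applied to the Yoneda-type cyclic presheaf on $\text{Bypass}_\ast^\text{Eul}$, because $F$ sends the canonical Eulerian tour on $(X_0,\ldots,X_n,X_0)$ to $\mathbb{T}_n$ and this assignment is natural in the cyclic variable; applying $F^\ast$ (which commutes with geometric realization as noted) then gives the result directly. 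I would present the argument in this second form, citing the Section 7 combinatorial lemma for the one nontrivial input.
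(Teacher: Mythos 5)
Your proof is correct and matches the paper's approach: reduce degreewise using the fact that $F^\ast$ (being precomposition) preserves colimits, then exhibit the degreewise bijection $\coprod_{X_0,\ldots,X_n}\text{Map}(\Gamma,[X_0,\ldots,X_n,X_0])\cong\text{Map}(F\Gamma,\mathbb{T}_n)$, which the paper phrases as "an itinerary is determined by the order, not the location, of its stops." Your appeal to the Section 7 lemma is unnecessary (the paper proves the bijection directly, and Section 7 comes later in the logical order), but it is not incorrect.
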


\begin{proof}
Recall that $\overline{\mathcal{O}}_\text{thh}^S$ is the geometric realization of the cyclic presheaf $$(\overline{\mathcal{O}}_\text{thh}^S)_n=\coprod_{X_0,\ldots,X_n}[X_0,\ldots,X_n,X_0].$$ Given $\Gamma\in\text{Bypass}_S^\text{Eul}$, we may evaluate $$(\overline{\mathcal{O}}_\text{thh}^S)_n(\Gamma)\cong\coprod_{X_0,\ldots,X_n}\text{Map}(\Gamma,[X_0,\ldots,X_n,X_0]).$$ A map $\Gamma\rightarrow[X_0,\ldots,X_n,X_0]$ is an \emph{itinerary} of the Eulerian tour on $\Gamma$; that is, it is a way to regard the Eulerian tour as a tour with stops at $X_0,X_1,\ldots,X_n$ in that order.

Hence, $(\overline{\mathcal{O}}_\text{thh}^S)_n(\Gamma)$ is the set of all $n$-stop itineraries for the specified Eulerian tour.

On the other hand, $F^\ast(\overline{\mathcal{O}}_\text{thh}^\ast)_n(\Gamma)$ is the set of all $n$-stop itineraries for the Eulerian tour where we have forgotten the exact location of each stop.

In order to remember an itinerary, we need only remember the \emph{order} of our stops, not the exact location. Therefore, $(\overline{\mathcal{O}}_\text{thh}^S)_n\cong F^\ast(\overline{\mathcal{O}}_\text{thh}^\ast)_n$. Moreover, $F^\ast$ preserves all colimits (in particular geometric realizations), so $\overline{\mathcal{O}}_\text{thh}^S\cong F^\ast\overline{\mathcal{O}}_\text{thh}^\ast$, as desired.
\end{proof}

\begin{proof}[Proof of Theorem \ref{ThmCase2}]
Using Remark \ref{RmkBar}, $\mathcal{O}_\text{thh}^S\cong k_\ast\overline{\mathcal{O}}_\text{thh}^S$, which by the lemma and Theorem \ref{ThmCase1} is $$k_\ast F^\ast\overline{\mathcal{O}}_\text{thh}^\ast\cong k_\ast F^\ast r^\ast(S^1)\cong k_\ast r^\ast(S^1),$$ as desired. (There is a potential for confusion in the notation: We are using $r$ to refer first to $\text{Bypass}_\ast^\text{Eul}\rightarrow BS^1$, and second to $\text{Bypass}_S^\text{Eul}\rightarrow BS^1$, which is really the composite $rF$.)
\end{proof}

\section{Counting Eulerian tours two ways}
\noindent In section 3, we saw that $\text{Bypass}_\ast^\text{Eul}\cong\Lambda$. For sets $S\neq\ast$, there is still a close relationship between $\text{Bypass}_S^\text{Eul}$ and $\Lambda$, which we explore in this section.

Recall we are regarding the objects of $\Lambda$ as categories $\mathbb{T}_n$.

\begin{definition}
Given a category $\mathcal{C}$, its cyclic nerve is the cyclic set $$N_\text{cyc}\mathcal{C}=\text{Fun}(-,\mathcal{C}):\Lambda^\text{op}\rightarrow\text{Set}.$$
\end{definition}

\noindent Write $S_\text{triv}$ for the category with set $S$ of objects and exactly one morphism between each object (which is equivalent to the trivial category $\ast$). In this case, $(N_\text{cyc}S_\text{triv})_n\cong S^{n+1}$ is the set of labelings of $\{0,\ldots,n\}$ by $S$.

We now turn to the main result in this section. Notice that an object of $\text{Bypass}_S^\text{Eul}$ can be described in two ways:
\begin{enumerate}
\item as a nonempty graph $\Gamma\in\text{Bypass}_S$ along with an Eulerian tour; or
\item as a cyclically ordered set of edges $\mathbb{T}_n\in\Lambda$ along with a labeling of its vertices in $S$.
\end{enumerate}

\noindent Write $\text{Bypass}_S^{+}\subseteq\text{Bypass}_S$ for the full subcategory of nonempty graphs.

Categorically, (1) and (2) amount to the following two lemmas:

\begin{lemma}\label{L1}
The forgetful functor $\text{Bypass}_S^\text{Eul}\rightarrow\text{Bypass}_S^{+}$ is a right fibration, and the associated straightening $\text{Eul}:(\text{Bypass}_S^{+})^\text{op}\rightarrow\text{Set}$ sends a graph $\Gamma$ to its set $\text{Eul}(\Gamma)$ of Eulerian tours.
\end{lemma}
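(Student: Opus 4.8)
The plan is to verify the two claims of the lemma separately: first that $k : \text{Bypass}_S^\text{Eul} \to \text{Bypass}_S^{+}$ is a right fibration, and second that the corresponding contravariant functor to $\text{Set}$ sends $\Gamma$ to $\text{Eul}(\Gamma)$. Since everything in sight is a $1$-category, a right fibration is exactly a discrete fibration in the classical sense: for every morphism $g : \Gamma \to \Gamma'$ in $\text{Bypass}_S^{+}$ and every lift $\widetilde{\Gamma'} \in \text{Bypass}_S^\text{Eul}$ of the target, there is a \emph{unique} morphism $\widetilde{\Gamma} \to \widetilde{\Gamma'}$ in $\text{Bypass}_S^\text{Eul}$ lying over $g$, with $\widetilde{\Gamma}$ necessarily a lift of $\Gamma$. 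This is precisely the observation already recorded in Section 3: a bypass operation $g : \Gamma \to \Gamma'$ carries the data of a total ordering of each fiber $g^{-1}(e)$, and hence an Eulerian tour on $\Gamma'$ pulls back canonically to one on $\Gamma$ — concatenate the path-orderings in the order dictated by the tour on $\Gamma'$, inserting added loops in the appropriate slots. I would write out that this pullback tour is well-defined (a single cycle through all edges of $\Gamma$), that it depends only on $g$ and the tour on $\Gamma'$, and that it is functorial in composable bypass operations. Uniqueness of the lift is immediate: a morphism in $\text{Bypass}_S^\text{Eul}$ over $g$ \emph{is} a morphism in $\text{Bypass}_S$ (namely $g$ itself) that happens to respect the chosen tours, so there is at most one, and the construction just described shows there is at least one once the source is given the pulled-back tour.

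For the identification of the straightening, recall that under the straightening/unstraightening equivalence for right fibrations over a $1$-category $\mathcal{D}$, the functor $\mathcal{D}^\text{op} \to \text{Set}$ associated to a discrete fibration $p : \mathcal{E} \to \mathcal{D}$ sends $d$ to the fiber $p^{-1}(d)$ and sends $g : d \to d'$ to the function $p^{-1}(d') \to p^{-1}(d)$ given by the unique lifts. Applied here, the fiber of $k$ over $\Gamma$ is by definition the set of Eulerian tours on $\Gamma$ (up to cyclic rotation), i.e. $\text{Eul}(\Gamma)$, and the transition function for $g : \Gamma \to \Gamma'$ is exactly the pullback-of-tours map constructed in the previous paragraph. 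So the straightening is the presheaf $\text{Eul}$ with these restriction maps, which is the assertion of the lemma.

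The main obstacle — really the only place any care is needed — is the bookkeeping in the first paragraph: checking that the pulled-back ordering of the edges of $\Gamma$ genuinely forms a single Eulerian cycle (rather than several disjoint cycles) and that added loops land consistently, and then checking functoriality $(hg)^* = g^* h^*$ on tours. None of this is deep, but it must be done honestly because it is the content that makes $k$ a fibration rather than merely a functor; I would phrase it combinatorially by describing the tour on $\Gamma$ as "traverse the tour on $\Gamma'$, and each time you would cross an edge $e$ of $\Gamma'$, instead cross the edges of $g^{-1}(e)$ in their prescribed order" and noting this visits every edge of $\Gamma$ exactly once since $g$ is a function with those fibers. Everything else is formal, using that $1$-categorical right fibrations are discrete fibrations and that straightening reads off fibers and transition maps.
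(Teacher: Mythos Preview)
Your proposal is correct and is essentially the same argument as the paper's, just run through the straightening/unstraightening equivalence in the opposite direction: the paper starts from the presheaf $\text{Eul}$ (whose contravariant functoriality is the pullback-of-tours observation from Section~3 that you spell out), forms its Grothendieck construction, and notes this is tautologically $\text{Bypass}_S^\text{Eul}\to\text{Bypass}_S^{+}$; you instead verify the discrete-fibration lifting property for $k$ directly and then read off the fibers. The combinatorial content --- that a bypass operation canonically pulls back an Eulerian tour --- is identical in both, and your more explicit treatment of it is fine.
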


\begin{proof}
The (right fibrational) Grothendieck construction of Eul is the functor $\mathcal{G}\rightarrow\text{Bypass}_S^{+}$, where $\mathcal{G}$ is the category of nonempty graphs along with a chosen Eulerian tour. By construction, this right fibration is equivalent to $\text{Bypass}_S^\text{Eul}\rightarrow\text{Bypass}_S^{+}$.
\end{proof}

\begin{lemma}\label{L2}
The forgetful functor $$\text{Bypass}_S^\text{Eul}\rightarrow\text{Bypass}_\ast^\text{Eul}\cong\Lambda^\text{op}$$ is a left fibration, and the associated straightening is $N_\text{cyc}S_\text{triv}:\Lambda^\text{op}\rightarrow\text{Set}$, which sends $\mathbb{T}_n$ to the set of ways to label its vertices in $S$.
\end{lemma}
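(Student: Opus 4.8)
The plan is to exhibit the forgetful functor $\pi:\text{Bypass}_S^\text{Eul}\rightarrow\text{Bypass}_\ast^\text{Eul}\cong\Lambda^\text{op}$ as a left fibration by checking the fiber and the lifting of morphisms directly, and then to identify the straightening with $N_\text{cyc}S_\text{triv}$. Concretely, the functor $\pi$ sends a pair $(\Gamma,\text{Eulerian tour})$ to the underlying cyclically ordered set of edges (forgetting the vertex labels), which under $\text{Bypass}_\ast^\text{Eul}\cong\Lambda^\text{op}$ is an object $\mathbb{T}_n$. First I would observe that the fiber of $\pi$ over $\mathbb{T}_n$ is exactly the set of ways to assign a vertex in $S$ to the source of each of the $n+1$ edges of the tour — equivalently a map of categories $\mathbb{T}_n\rightarrow S_\text{triv}$, i.e.\ an element of $(N_\text{cyc}S_\text{triv})_n\cong S^{n+1}$ — and that this fiber is discrete (a set), which is the expected shape for a left fibration over a $1$-category.

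Next, for the fibration property itself, I would use the characterization that $\pi$ is a left fibration iff it is a cocartesian fibration with discrete (groupoid) fibers, so it suffices to show that every morphism in $\Lambda^\text{op}$ lifts cocartesianly. Given $(\Gamma,\tau)$ with $\pi(\Gamma,\tau)=\mathbb{T}_n$ and a morphism $\mathbb{T}_n\rightarrow\mathbb{T}_m$ in $\Lambda^\text{op}$ (i.e.\ a degree-$1$ functor $f:\mathbb{T}_m\rightarrow\mathbb{T}_n$ in $\Lambda$), the key point is that the bypass operation underlying any lift of $f$ is determined on the level of edges by $f$ itself — it merges the path of edges of $\Gamma$ lying over each elementary edge of $\mathbb{T}_m$ (or inserts a loop where $f$ has an empty fiber) — and the only remaining choice, namely the vertex labels of the resulting graph $\Gamma'$, is forced: the source vertex of the merged edge over the $i$-th elementary edge of $\mathbb{T}_m$ must be the source vertex in $\Gamma$ of the first edge in that path. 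This produces a canonical lift, and I would then verify the cocartesian universal property — any morphism in $\text{Bypass}_S^\text{Eul}$ out of $(\Gamma,\tau)$ factoring $f$ through $\Lambda^\text{op}$ factors uniquely through this lift — which again reduces to the observation that vertex labels are uniquely determined by the edge data and the source-compatibility of Eulerian tours already used in the proof of Lemma \ref{LemLambda}.

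Finally, to identify the straightening, I would note that we have now described, for each $\mathbb{T}_n$, a natural bijection $\pi^{-1}(\mathbb{T}_n)\cong(N_\text{cyc}S_\text{triv})_n$, and that the functoriality in $\mathbb{T}_n$ induced by the cocartesian lifts is exactly precomposition $\mathbb{T}_m\rightarrow\mathbb{T}_n\rightarrow S_\text{triv}$, which is the cyclic structure map of $N_\text{cyc}S_\text{triv}$; this is essentially a restatement of the $i_e$-side of the bookkeeping in Lemma \ref{LemLambda}, now carried along with labels. I expect the main obstacle to be the bookkeeping in the cocartesian-lift step: making precise why the vertex-labeling of the target graph $\Gamma'$ is uniquely determined (rather than merely existing), since this is where the subtlety about Eulerian tours ``remembering their vertices for formal reasons'' from Lemma \ref{LemLambda} genuinely enters, and it is also where one must be careful about the direction of the arrows (a morphism in $\text{Bypass}_S^\text{Eul}$ covering $\mathbb{T}_n\rightarrow\mathbb{T}_m$ in $\Lambda^\text{op}$ corresponds to $f:\mathbb{T}_m\rightarrow\mathbb{T}_n$ in $\Lambda$), so that ``left'' rather than ``right'' fibration comes out correctly. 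Alternatively, if this direct approach proves cumbersome, one can instead invoke Lemma \ref{LemLambda} together with the dependence of $\text{Bypass}_S^\text{Eul}$ on $S$: the construction $S\mapsto\text{Bypass}_S^\text{Eul}$ is the Grothendieck construction of a functor out of $\Lambda^\text{op}$ by the fiberwise description above, and one reads off the straightening from the $S=\ast$ case — but I would present the direct argument as it is more transparent.
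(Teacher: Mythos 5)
Your proposal is correct and follows essentially the same strategy as the paper: the paper writes out the left-fibrational Grothendieck construction of $N_\text{cyc}S_\text{triv}$ explicitly and matches it with $\text{Bypass}_S^\text{Eul}$, whereas you verify the unique-cocartesian-lift property directly, but both reduce to the same combinatorial fact --- that the source-vertex labels on the target graph $\Gamma'$ are forced by the labels on $\Gamma$ together with the underlying bypass operation. The only loose end you leave slightly implicit is the empty-fiber (loop) case, where the paper records the rule $s(e)=s(f^{-1}(e)_{+})$ for the first edge of $\Gamma$ mapping after $e$; this is the same ``Eulerian tours remember their vertices'' observation from Lemma \ref{LemLambda} that you correctly flag as the crux.
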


\begin{proof}
Recall the explicit description of the equivalence $i_e^{-1}:\text{Bypass}_\ast^\text{Eul}\rightarrow\Lambda^\text{op}$ from Lemma \ref{LemLambda}: $i_e^{-1}(\Gamma)$ is the set of edges of $\Gamma$, cyclically ordered by the Eulerian tour. If $f:\Gamma\rightarrow\Gamma^\prime$ is a bypass operation, then $i_e^{-1}(f):e\mapsto e^\prime$ if and only if $$f(e^\prime_{-})<e\leq f(e^\prime),$$ where $e^\prime_{-}$ is the edge immediately preceding $e^\prime$ in the Eulerian tour on $\Gamma$.

The composite $\text{Bypass}_\ast^\text{Eul}\cong\Lambda^\text{op}\rightarrow\text{Set}$ sends the graph $\Gamma_n$ with $n$ edges to the set of ways to label vertices in $S$.

The associated (left fibrational) Grothendieck construction is the functor $\mathcal{G}\rightarrow\text{Bypass}_\ast^\text{Eul}$, where $\mathcal{G}$ is defined as follows: An object is a graph $\Gamma_n\in\text{Bypass}_\ast^\text{Eul}$ with $n$ edges and for each edge $e$, a labeling $s(e)\in S$ which we call its \emph{source vertex}. A morphism is $f:\Gamma_m\rightarrow\Gamma_n$ such that:
\begin{itemize}
\item if the preimage of an edge $e\in\Gamma_n$ is the path $v_0\xrightarrow{e_0}\cdots\xrightarrow{e_{n-1}}v_n$, then $s(e)=s(e_0)$;
\item if the preimage of $e\in\Gamma_n$ is empty, and $f^{-1}(e)_{+}$ is the first edge in $\Gamma_m$ whose image under $f$ comes after $e$, then $s(e)=s(f^{-1}(e)_{+})$.
\end{itemize}
This is also a description of $\text{Bypass}_S^\text{Eul}$: the objects are all of the form $[X_0,X_1,\ldots,X_n,X_0]$, and a map is an order-presering function on the edge set which satisfies the properties above. Hence $\mathcal{G}\cong\text{Bypass}_S^\text{Eul}$, and this completes the proof.
\end{proof}

\noindent We end this section with two corollaries that will be important later.

\begin{corollary}\label{CorCS}
The classifying space of $\text{Bypass}_S^\text{Eul}$ is $|\text{Bypass}_S^\text{Eul}|\cong BS^1$.
\end{corollary}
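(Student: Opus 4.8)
The plan is to exploit Lemma \ref{L2}, which presents the forgetful functor $q:\text{Bypass}_S^\text{Eul}\rightarrow\text{Bypass}_\ast^\text{Eul}\cong\Lambda^\text{op}$ as a left fibration whose straightening is the cyclic nerve $N_\text{cyc}S_\text{triv}:\Lambda^\text{op}\rightarrow\text{Set}\subseteq\text{Top}$. The classifying space of the total space of a left fibration is computed by the colimit of its straightening: if $p:\mathcal{E}\rightarrow\mathcal{B}$ is a left fibration classified by $\Phi:\mathcal{B}\rightarrow\text{Top}$, then $|\mathcal{E}|\cong\text{colim}_{\mathcal{B}}\Phi$ (this is the standard fact that the Grothendieck construction of a $\text{Top}$-valued functor models the homotopy colimit; see e.g. \cite{HTT} 3.3.4.6 applied to the unstraightening). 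Taking $\mathcal{B}=\Lambda^\text{op}$ and $\Phi=N_\text{cyc}S_\text{triv}$, we get
$$|\text{Bypass}_S^\text{Eul}|\cong\text{colim}_{\Lambda^\text{op}}N_\text{cyc}S_\text{triv}.$$

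So the task reduces to computing this colimit. The key point is that $N_\text{cyc}S_\text{triv}$, as a cyclic space, is the cyclic nerve of the category $S_\text{triv}$, and $S_\text{triv}\simeq\ast$. I would argue that the cyclic nerve is invariant under equivalence of categories — either directly, since $\text{Fun}(\mathbb{T}_n,-)$ sends equivalences to equivalences and the simplicial (indeed cyclic) object levelwise is a product of copies of $S$ which deformation retracts onto a point compatibly with the cyclic structure — so $N_\text{cyc}S_\text{triv}\cong N_\text{cyc}\ast$ in $\mathcal{P}(\Lambda)$. But $N_\text{cyc}\ast$ is the terminal cyclic set $\ast$. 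Hence
$$|\text{Bypass}_S^\text{Eul}|\cong\text{colim}_{\Lambda^\text{op}}(\ast)\cong|\Lambda^\text{op}|\cong|\Lambda|\cong BS^1,$$
where the last equivalence is the Proposition just before Section 4 (that $r:\Lambda\rightarrow BS^1$ exhibits $BS^1$ as the classifying space of $\Lambda$), together with $\Lambda\cong\Lambda^\text{op}$ from Lemma \ref{LemLambda}.

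Alternatively — and this is probably the cleaner route to also get the compatibility with the map $r$ of Section 6 — one can factor the argument through Corollary \ref{ColimLambda}: since $N_\text{cyc}S_\text{triv}\cong\ast$ as a cyclic space, Corollary \ref{ColimLambda} gives $\text{colim}_{\Lambda^\text{op}}(\ast)\cong|\ast|_{hS^1}\cong(\ast)_{hS^1}\cong BS^1$, and this identification is by construction the one induced by $r:\Lambda\rightarrow BS^1$, so the composite $\text{Bypass}_S^\text{Eul}\xrightarrow{F}\text{Bypass}_\ast^\text{Eul}\cong\Lambda^\text{op}\rightarrow BS^1$ from Section 6 is indeed the classifying space map. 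I expect the main obstacle to be bookkeeping rather than mathematics: one must check that the left fibration straightening of Lemma \ref{L2} really does land in $\text{Top}$-valued functors (harmless, since it is $\text{Set}$-valued) and that "colimit of the straightening computes the classifying space of the total space" is being applied in the correct variance — a left fibration over $\mathcal{B}$ corresponds to a covariant functor $\mathcal{B}\rightarrow\text{Top}$, and its total space has classifying space the colimit over $\mathcal{B}$, which here is $\Lambda^\text{op}$, matching the direction in which we already know how to compute (Corollary \ref{ColimLambda} computes $\text{colim}_{\Lambda^\text{op}}$). A secondary point to state carefully is the equivalence-invariance of the cyclic nerve, but this is elementary since everything in sight is a $1$-category.
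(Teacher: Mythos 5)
Your overall strategy matches the paper's: use Lemma \ref{L2} plus HTT 3.3.4.6 to reduce to $|\text{Bypass}_S^\text{Eul}|\cong\text{colim}_{\Lambda^\text{op}}N_\text{cyc}S_\text{triv}$, then evaluate the colimit. But there is a genuine gap in the middle step. You claim $N_\text{cyc}S_\text{triv}\cong\ast$ in $\mathcal{P}(\Lambda)$, justified by saying the cyclic nerve is invariant under equivalence of categories and that each level ``deformation retracts onto a point.'' This is false. By the paper's definition, $N_\text{cyc}\mathcal{C}$ is a cyclic \emph{set}, and $(N_\text{cyc}S_\text{triv})_n\cong S^{n+1}$ is a discrete set; for $|S|>1$ it is not contractible, and there is no deformation retraction. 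The cyclic nerve is not invariant under equivalence of $1$-categories (an equivalence that is not bijective on objects does not give a levelwise bijection of cyclic sets), so you cannot replace $N_\text{cyc}S_\text{triv}$ by $N_\text{cyc}\ast=\ast$ inside the colimit. What \emph{is} invariant under equivalence is the geometric realization $|N_\text{cyc}S_\text{triv}|$ of the underlying simplicial set.

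The fix is exactly to apply Corollary \ref{ColimLambda} to $N_\text{cyc}S_\text{triv}$ itself, not to $\ast$: $\text{colim}_{\Lambda^\text{op}}N_\text{cyc}S_\text{triv}\cong|N_\text{cyc}S_\text{triv}|_{hS^1}$, where $|\cdot|$ is realization over $\Delta\subseteq\Lambda$. For $S_\text{triv}$ the restriction $i^\ast N_\text{cyc}S_\text{triv}$ coincides with the ordinary nerve $NS_\text{triv}$ (both are $[n]\mapsto S^{n+1}$ with the same face and degeneracy structure), and $|NS_\text{triv}|\cong\ast$ because $S_\text{triv}\simeq\ast$. Hence the colimit is $(\ast)_{hS^1}\cong BS^1$. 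This is precisely the paper's argument; your write-up reaches the right endpoint via an intermediate assertion that is incorrect as stated and needs to be replaced by the realization-level statement.
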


\begin{proof}
By \cite{HTT} 3.3.4.6 and Lemma \ref{L2}, $|\text{Bypass}_S^\text{Eul}|\cong\text{colim}(N_\text{cyc}S_\text{triv})$. By Corollary \ref{ColimLambda}, this is equivalent to $|N_\text{cyc}S_\text{triv}|_{hS^1}$. In the particular case of $S_\text{triv}$, the cyclic nerve is identical to the ordinary nerve (as a simplicial set), and $|NS_\text{triv}|\cong\ast$ since $S_\text{triv}\cong\ast$. Therefore, $|\text{Bypass}_S^\text{Eul}|\cong(\ast)_{hS^1}\cong BS^1$.
\end{proof}

\begin{remark}\label{RmkBS1}
Notice that we have actually proven the stronger statement that the functor $\text{Bypass}_S^\text{Eul}\rightarrow\text{Bypass}_\ast^\text{Eul}\cong\Lambda$ is an equivalence on geometric realizations of nerves. Therefore, the functor $r$ of Theorem \ref{ThmCase2} exhibits $BS^1$ as the classifying space of $\text{Bypass}_S^\text{Eul}$.
\end{remark}

\begin{corollary}\label{CorComb}
The forgetful functor $k:\text{Bypass}_S^\text{Eul}\rightarrow\text{Bypass}_S^{+}$ induces an equivalence of $\infty$-categories $$k_\ast:\mathcal{P}(\text{Bypass}_S^\text{Eul})\rightarrow\mathcal{P}(\text{Bypass}_S^{+})_{/\text{Eul}}.$$
\end{corollary}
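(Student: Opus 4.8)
The plan is to recognize the target category $\mathcal{P}(\text{Bypass}_S^+)_{/\text{Eul}}$ as the category of presheaves on the Grothendieck construction of $\text{Eul}$, via the standard equivalence between slices of presheaf categories and presheaves on the total space of a right fibration. Concretely, for any small $\infty$-category $\mathcal{D}$ and any presheaf $\mathcal{F}\in\mathcal{P}(\mathcal{D})$, there is a canonical equivalence $\mathcal{P}(\mathcal{D})_{/\mathcal{F}}\simeq\mathcal{P}(\mathcal{D}_{/\mathcal{F}})$, where $\mathcal{D}_{/\mathcal{F}}$ denotes the right fibration over $\mathcal{D}$ classified by $\mathcal{F}$ (this is \cite{HTT} 5.1.6.12, or can be extracted from the fact that $\mathcal{P}(\mathcal{D})_{/\mathcal{F}}\simeq\text{Fun}(\mathcal{D}^{\text{op}},\text{Top})_{/\mathcal{F}}\simeq\text{Fun}(\mathcal{D}^{\text{op}},\text{Top}_{/(-)})\circ\mathcal{F}$ evaluated appropriately, but it is cleanest to just cite it). Applying this with $\mathcal{D}=\text{Bypass}_S^+$ and $\mathcal{F}=\text{Eul}$, we get $\mathcal{P}(\text{Bypass}_S^+)_{/\text{Eul}}\simeq\mathcal{P}((\text{Bypass}_S^+)_{/\text{Eul}})$.

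Next I would invoke Lemma \ref{L1}, which identifies the right fibration classified by $\text{Eul}$ with the forgetful functor $\text{Bypass}_S^\text{Eul}\to\text{Bypass}_S^+$. Hence $(\text{Bypass}_S^+)_{/\text{Eul}}\simeq\text{Bypass}_S^\text{Eul}$ as categories over $\text{Bypass}_S^+$, and so $\mathcal{P}(\text{Bypass}_S^+)_{/\text{Eul}}\simeq\mathcal{P}(\text{Bypass}_S^\text{Eul})$. The remaining point is to check that this composite equivalence is the one induced by $k_\ast$, i.e. that under the identification $\mathcal{P}(\mathcal{D}_{/\mathcal{F}})\simeq\mathcal{P}(\mathcal{D})_{/\mathcal{F}}$, left Kan extension along the structure map $\mathcal{D}_{/\mathcal{F}}\to\mathcal{D}$ corresponds to the forgetful functor $\mathcal{P}(\mathcal{D})_{/\mathcal{F}}\to\mathcal{P}(\mathcal{D})$ followed by nothing — more precisely, that $k_\ast$ lands in the slice over $\text{Eul}$ and is an equivalence onto it. This is because, for a right fibration $p:\mathcal{E}\to\mathcal{D}$ classified by $\mathcal{F}$, one has $p_\ast(\ast_\mathcal{E})\simeq\mathcal{F}$ (the left Kan extension of the terminal presheaf along a right fibration recovers its classifying presheaf, \cite{HTT} 3.3.4.6 applied fiberwise, or directly: $(p_\ast\ast)(d)=\mathrm{colim}_{\mathcal{E}_{/d}}\ast=|\mathcal{E}_{/d}|\simeq\mathcal{F}(d)$ since the fibers of a right fibration over an object are the fiber groupoids), and then every object of $\mathcal{P}(\mathcal{E})$ maps to an object over $p_\ast\ast\simeq\mathcal{F}$, exhibiting the factorization $p_\ast:\mathcal{P}(\mathcal{E})\to\mathcal{P}(\mathcal{D})_{/\mathcal{F}}$.

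I expect the main obstacle to be purely bookkeeping: pinning down that the equivalence produced by the abstract slice-of-presheaves argument is literally the functor called $k_\ast$ in the statement, rather than some other equivalence differing by an autoequivalence. The cleanest way to handle this is to characterize $k_\ast$ by a universal property — it is the unique colimit-preserving functor $\mathcal{P}(\text{Bypass}_S^\text{Eul})\to\mathcal{P}(\text{Bypass}_S^+)$ restricting to $k$ on representables — and to note that the composite of the two cited equivalences, composed with the forgetful functor down to $\mathcal{P}(\text{Bypass}_S^+)$, is colimit-preserving (each step is) and sends the representable at $\Gamma\in\text{Bypass}_S^\text{Eul}$ to the representable at $k(\Gamma)$ equipped with its tautological Eulerian tour as a point of $\text{Eul}(k(\Gamma))$; that is exactly the data of the representable at $k(\Gamma)$ in $\mathcal{P}(\text{Bypass}_S^+)$ together with its image in $\text{Eul}$. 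Uniqueness then forces agreement with $k_\ast$. Everything else is an application of cited results from \cite{HTT} plus Lemma \ref{L1}.
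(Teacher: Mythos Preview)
Your proposal is correct and follows essentially the same strategy as the paper: identify $\mathcal{P}(\text{Bypass}_S^{+})_{/\text{Eul}}$ with presheaves on the total space of the right fibration classified by $\text{Eul}$, then invoke Lemma \ref{L1}. The only differences are cosmetic: you cite \cite{HTT} 5.1.6.12 for the general equivalence $\mathcal{P}(\mathcal{D})_{/\mathcal{F}}\simeq\mathcal{P}(\mathcal{D}_{/\mathcal{F}})$, whereas the paper reproves this in the special case of a $1$-category and a set-valued $\mathcal{F}$ by hand via strict right fibrations; and you take the extra care (which the paper omits) to verify that the resulting equivalence agrees with $k_\ast$ by checking on representables.
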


\noindent In particular, $k_\ast(\ast)\cong\text{Eul}$.

\begin{proof}
If $\mathcal{D}$ is a quasicategory, then $\mathcal{P}(\mathcal{D})\cong\text{RFib}(\mathcal{D})$ by \cite{BarwickShah} 1.4. Here $\text{RFib}(\mathcal{D})$ is the full subcategory of $(\text{Cat}_\infty)_{/\mathcal{D}}$ spanned by the right fibrations, and a presheaf $F$ is sent to its Grothendieck construction $\smallint F\rightarrow\mathcal{D}$.

Therefore, if $F$ is a presheaf of spaces on $\mathcal{D}$, the Grothendieck construction also induces an equivalence $$\mathcal{P}(\mathcal{D})_{/F}\cong\text{RFib}(\mathcal{D})_{/\smallint F}.$$ Now suppose that $\mathcal{D}$ is a 1-category and $F$ is discrete (a presheaf of sets). Then $\smallint F$ is a 1-category with an explicit model: an object is a pair $(X,x)$ with $X\in\mathcal{D}$ and $x\in F(X)$. A morphism $(X,x)\rightarrow(Y,y)$ is a morphism $X\xrightarrow{f}Y$ such that $F(f)(y)=x$.

Identifying categories with their nerves, the functor $\smallint F\rightarrow\mathcal{D}$ is a \emph{strict} right fibration; that is, given $n\geq 1$, $0<k\leq n$, and a diagram $$\xymatrix{
\Lambda^n_k\ar[r]\ar[d] &\smallint F\ar[d] \\
\Delta^n\ar[r] &\mathcal{D},
}$$ there is a \emph{unique} lift $\Delta^n\rightarrow\smallint F$. It follows that, if $G:\mathcal{A}\rightarrow\smallint F$ is a functor for which the composite $\mathcal{A}\rightarrow\mathcal{D}$ is a right fibration, then $G$ is a right fibration. (A simplex $\sigma$ in $\smallint F$ can be projected down to $\mathcal{D}$ and then lifted to some $\sigma^\prime$ in $\mathcal{A}$. Because the simplex in $\smallint F$ is suitably unique, $\sigma^\prime$ is a lift of $\sigma$.)

Therefore, $$\mathcal{P}(\mathcal{D})_{/F}\cong\text{RFib}(\mathcal{D})_{/\smallint F}\cong\text{RFib}(\smallint F)\cong\mathcal{P}(\smallint F).$$ When $\mathcal{D}=\text{Bypass}_S^{+}$ and $F=\text{Eul}$, we have the desired equivalence by Lemma \ref{L1}: $$\mathcal{P}(\text{Bypass}_S^{+})_{/\text{Eul}}\cong\mathcal{P}(\text{Bypass}_S^\text{Eul}).$$
\end{proof}

\section{THH as a circle bundle}
\noindent In this section, we will prove Theorem \ref{MainThm}, which identifies the presheaf $\mathcal{O}_\text{thh}\in\mathcal{P}(\text{Bypass}_S)$ explicitly, restricted away from the empty graph. It will turn out to be essentially an immediate corollary of Theorem \ref{ThmCase2} and Corollaries \ref{CorCS} and \ref{CorComb}.

First, it is not hard to calculate $\mathcal{O}_\text{thh}$ at the empty graph:

\begin{proposition}\label{OthhEasy}
There is an equivalence $\mathcal{O}_\text{thh}(\emptyset)\cong S$.
\end{proposition}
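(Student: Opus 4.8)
The plan is to compute $\mathcal{O}_\text{thh}(\emptyset)$ directly from the definition of $\mathcal{O}_\text{thh}$ as the geometric realization of the cyclic presheaf $(\mathcal{O}_\text{thh})_n = \coprod_{X_0,\ldots,X_n} (X_0,X_1,\ldots,X_n,X_0)$, evaluated at the empty graph $\emptyset \in \text{Bypass}_S$. First I would observe that evaluation at $\emptyset$ commutes with the geometric realization (colimits of presheaves are computed pointwise), so $\mathcal{O}_\text{thh}(\emptyset)$ is the geometric realization of the simplicial space $[n] \mapsto (\mathcal{O}_\text{thh})_n(\emptyset) = \coprod_{X_0,\ldots,X_n} \text{Map}_{\text{Bypass}_S}\big((X_0,X_1,\ldots,X_n,X_0),\,\emptyset\big)$.

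The key point is then to identify $\text{Map}_{\text{Bypass}_S}\big((X_0,\ldots,X_n,X_0),\,\emptyset\big)$. A bypass operation into the empty graph $\emptyset$ (which has no edges) must send every edge of the source to an empty fiber; by the defining condition, an edge with empty fiber must be a loop. The source graph $(X_0,X_1,\ldots,X_n,X_0)$ consists of a single path $X_0 \to X_1 \to \cdots \to X_n \to X_0$; its edges are all loops precisely when $X_0 = X_1 = \cdots = X_n$, and in that case there is a unique such morphism (the function to the empty edge set is unique, and the orderings of fibers are vacuous). So the mapping space is a single point when all $X_i$ are equal and empty otherwise. Hence $(\mathcal{O}_\text{thh})_n(\emptyset) \cong \coprod_{X \in S} \ast \cong S$, the constant simplicial space at the set $S$.

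Finally, the geometric realization of the constant simplicial space at $S$ is $S$ itself (a discrete space has contractible realization of its constant diagram, and geometric realization commutes with the coproduct over $S$). This gives $\mathcal{O}_\text{thh}(\emptyset) \cong S$. I would also double-check that the cyclic (and simplicial) structure maps act trivially on this constant diagram: a face, degeneracy, or cyclic operator applied to the path $(X,X,\ldots,X,X)$ again produces a constant path, and the induced map on the one-point mapping spaces is forced, so the simplicial object really is constant. The only mild subtlety — and the step most worth stating carefully — is the verification that a bypass operation into $\emptyset$ forces all vertices along the path to coincide; everything else is formal.
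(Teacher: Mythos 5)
The overall strategy here --- compute $\mathcal{O}_\text{thh}(\emptyset)$ as the geometric realization of the pointwise-evaluated cyclic presheaf and observe that it is constant at $S$ --- is exactly the paper's. But there is a genuine error in how you evaluate the representable presheaves. Since $\mathcal{P}(\text{Bypass}_S)=\text{Fun}(\text{Bypass}_S^\text{op},\text{Top})$ and Yoneda sends $\Gamma$ to $\text{Map}(-,\Gamma)$, the evaluation of $(\mathcal{O}_\text{thh})_n$ at $\emptyset$ is $\coprod_{X_0,\ldots,X_n}\text{Map}_{\text{Bypass}_S}\big(\emptyset,(X_0,\ldots,X_n,X_0)\big)$ --- maps \emph{out of} the empty graph --- not $\text{Map}\big((X_0,\ldots,X_n,X_0),\emptyset\big)$ as you wrote. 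Taken literally, your version is always empty: a bypass operation is a function on edge sets, and there is no function from a nonempty edge set to the empty edge set.

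Your combinatorial reasoning (``an edge with empty fiber must be a loop, so all the $X_i$ must coincide'') only makes sense in the correct direction. A bypass operation $\emptyset\to(X_0,\ldots,X_n,X_0)$ is the unique map of edge sets $\emptyset\to E$, and the bypass condition requires every \emph{target} edge $e$ with $f^{-1}(e)=\emptyset$ --- which is all of them --- to be a loop; since the edges of $(X_0,\ldots,X_n,X_0)$ are $X_i\to X_{i+1}$, this forces $X_0=\cdots=X_n$, and in that case the morphism is unique (the edge function and the fiber orderings are all vacuous). You reach the right answer only because you also applied the loop criterion to the wrong side (source edges instead of target edges), and the two reversals cancel; as written the proof does not hold. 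Once the variance is corrected, the remainder of your argument --- that the resulting cyclic object is constant at $S$, that realization of a constant diagram at a point is contractible, and that realization commutes with the coproduct over $S$ --- is correct and coincides with the paper's proof.
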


\begin{proof}
We know $$(\mathcal{O}_\text{thh})_n(\emptyset)\cong\coprod_{X_0,\ldots,X_n}\text{Map}(\emptyset,(X_0,\ldots,X_n,X_0)),$$ and $\text{Map}(\emptyset,(X_0,\ldots,X_n,X_0))$ is contractible if $X_0=\cdots=X_n$, and otherwise empty. Hence $(\mathcal{O}_\text{thh})_\bullet(\emptyset)$ is the constant cyclic set with value $S$, and therefore the geometric realization is $\mathcal{O}_\text{thh}(\emptyset)\cong S$.
\end{proof}

\noindent Henceforth, we will restrict to $\mathcal{O}^{+}_\text{thh}\in\mathcal{P}(\text{Bypass}_S^{+})$. By Theorem \ref{ThmCase2}, we know $\mathcal{O}^{+}_\text{thh}\cong k_\ast r^\ast(S^1)$, with functors as in $$\xymatrix{
\text{Bypass}_S^\text{Eul}\ar[r]^k\ar[d]_r &\text{Bypass}_S^{+} \\
BS^1. &
}$$ We will now prove our main result calculating $\mathcal{O}^{+}_\text{thh}$, divided into three propositions.

\begin{proposition}\label{MainThm1}
$\mathcal{O}_\text{thh}^{+}$ has a canonical $S^1$-action and $(\mathcal{O}_\text{thh}^{+})_{hS^1}\cong\text{Eul}$.
\end{proposition}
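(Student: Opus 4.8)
The plan is to deduce Proposition \ref{MainThm1} almost formally from Theorem \ref{ThmCase2} together with the push-pull combinatorics already established for $\text{Bypass}_S^\text{Eul}$. We have $\mathcal{O}_\text{thh}^{+}\cong k_\ast r^\ast(S^1)$, where $r:\text{Bypass}_S^\text{Eul}\to BS^1$ and $k:\text{Bypass}_S^\text{Eul}\to\text{Bypass}_S^{+}$ is the forgetful functor. The first clause—that $\mathcal{O}_\text{thh}^{+}$ carries a canonical $S^1$-action—should come from the $S^1$-action on the object $S^1\in\mathcal{P}(S^1)$ (the free torsor): since $r^\ast$ and $k_\ast$ are natural functors, they carry the $S^1$-action on $S^1$ through to an $S^1$-action on $k_\ast r^\ast(S^1)$. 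Concretely, $S^1$ with its free action is the image of $\ast$ under $j^\ast:\text{Top}\cong\mathcal{P}(\ast)\to\mathcal{P}(BS^1)$ where $j:BS^1\to\ast$ (using $\mathcal{P}(BS^1)\cong\text{Top}^{S^1}$ as in the text), so $r^\ast(S^1)\cong r^\ast j^\ast(\ast)\cong (jr)^\ast(\ast)$ already visibly lives in the $S^1$-equivariant world; the $S^1$-action is the residual action coming from the fact that $jr:\text{Bypass}_S^\text{Eul}\to\ast$ factors through $BS^1$.

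For clause (2), I would compute $(\mathcal{O}_\text{thh}^{+})_{hS^1}$ by passing the homotopy-orbit functor through $k_\ast$. The key point is that homotopy $S^1$-orbits on a presheaf in $\mathcal{P}(\text{Bypass}_S^{+})$ equipped with the kind of $S^1$-action arising here is computed objectwise, and $k_\ast$ commutes with the relevant colimit. So I expect
$$(\mathcal{O}_\text{thh}^{+})_{hS^1}\cong \bigl(k_\ast r^\ast(S^1)\bigr)_{hS^1}\cong k_\ast\bigl((r^\ast S^1)_{hS^1}\bigr).$$
Now $(r^\ast S^1)_{hS^1}$: since $r^\ast$ preserves colimits (it has a right adjoint, right Kan extension along $r$) and $(S^1)_{hS^1}\cong\ast$, we get $(r^\ast S^1)_{hS^1}\cong r^\ast(\ast)\cong\ast$, the constant (terminal) presheaf on $\text{Bypass}_S^\text{Eul}$. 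Therefore $(\mathcal{O}_\text{thh}^{+})_{hS^1}\cong k_\ast(\ast)$. By Corollary \ref{CorComb} (or the remark following it), $k_\ast(\ast)\cong\text{Eul}$, which is exactly the claim.

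The main obstacle is making precise the interchange $(k_\ast(-))_{hS^1}\cong k_\ast((-)_{hS^1})$ and, before that, organizing the $S^1$-action so that this interchange is even meaningful. The cleanest way is probably to work one categorical level up: an $S^1$-action on an object of $\mathcal{P}(\text{Bypass}_S^{+})$ is a functor $BS^1\to\mathcal{P}(\text{Bypass}_S^{+})$, equivalently (by adjunction/exponential) a presheaf on $\text{Bypass}_S^{+}$ valued in $S^1$-spaces, or a functor $\text{Bypass}_S^\text{Eul}\to \text{Top}^{S^1}$ as in Lemma \ref{LemCase1}; the homotopy orbits is then postcomposition with $(-)_{hS^1}:\text{Top}^{S^1}\to\text{Top}$, which is a colimit and hence commutes with the colimit-preserving $k_\ast$. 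I would spell this out by noting that $r^\ast S^1$ is, by construction, literally in the image of the functor $\mathcal{P}(\text{Bypass}_S^\text{Eul})^{BS^1}$ built from $r:\text{Bypass}_S^\text{Eul}\to BS^1$, so both the action and the orbit computation happen before applying $k_\ast$, and $k_\ast$ is then simply dragged through at the end. Once that bookkeeping is set up, the proposition is immediate from the two inputs $(S^1)_{hS^1}\cong\ast$ and $k_\ast(\ast)\cong\text{Eul}$.
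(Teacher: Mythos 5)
Your second and third paragraphs reproduce the paper's proof: use $\mathcal{O}_\text{thh}^{+}\cong k_\ast r^\ast(S^1)$ from Theorem \ref{ThmCase2}, observe that $r^\ast$ and $k_\ast$ preserve colimits (each has a right adjoint), carry $(-)_{hS^1}$ through both, and finish with $k_\ast(\ast)\cong\text{Eul}$ from Corollary \ref{CorComb}. This is the same approach.

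There is, however, a misstatement in your first paragraph. You identify the torsor $S^1\in\mathcal{P}(BS^1)\cong\text{Top}^{S^1}$ with $j^\ast(\ast)$, where $j:BS^1\to\ast$. But $j^\ast$ is precomposition, so $j^\ast(\ast)$ is the \emph{constant} presheaf at $\ast$, i.e., the $S^1$-space $\ast$ with trivial action (the terminal object of $\text{Top}^{S^1}$), not the torsor. The torsor is the Yoneda image of the unique object of $BS^1$. Accordingly the identification $r^\ast(S^1)\cong(jr)^\ast(\ast)$ fails: the right-hand side is constant (since $jr$ is constant), while $r^\ast(S^1)$ is genuinely twisted. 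The correct source of the $S^1$-action is what the paper records parenthetically: $S^1$ is an $S^1$-bimodule, so the object $S^1\in\text{Top}^{S^1}$ carries a further $S^1$-action, i.e., a functor $BS^1\to\text{Top}^{S^1}$ landing at the torsor; postcomposing with $r^\ast$ and then $k_\ast$ produces the action on $\mathcal{O}_\text{thh}^{+}$. Your third paragraph already describes this organization correctly, so the slip is confined to the first paragraph and does not affect the rest of the argument.
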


\begin{proof}
The circle action comes from the description $\mathcal{O}_\text{thh}^{+}\cong k_\ast r^\ast(S^1)$ of Theorem \ref{ThmCase2}. (As an $S^1$-bimodule, $S^1$ has an $S^1$-action even as an object of $\text{Top}^{S^1}$.) Since $k_\ast$ and $r^\ast$ each have right adjoints (given by $k^\ast$ and right Kan extension along $r$), they preserve colimits. Therefore, $$(\mathcal{O}_\text{thh}^{+})_{hS^1}\cong k_\ast r^\ast((S^1)_{hS^1})\cong k_\ast r^\ast(\ast)\cong k_\ast(\ast)\cong\text{Eul}.$$ The last step is by Corollary \ref{CorComb}.
\end{proof}

\begin{corollary}\label{CorOthh}
If $\Gamma\in\text{Bypass}_S$, then $$\mathcal{O}_\text{thh}(\Gamma)\cong\left\{\begin{array}{lr}
(S^1)^{\amalg\text{Eul}(\Gamma)}, &\text{if }\Gamma\neq\emptyset \\
S, &\text{if }\Gamma=\emptyset
\end{array}\right.,$$
\end{corollary}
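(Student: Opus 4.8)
The plan is to deduce the value of $\mathcal{O}_\text{thh}(\Gamma)$ from the structural results already in hand, splitting into the two cases of the statement. The case $\Gamma=\emptyset$ is exactly Proposition \ref{OthhEasy}, so nothing further is needed there. For $\Gamma\neq\emptyset$, the object we must evaluate is $\mathcal{O}_\text{thh}^{+}\in\mathcal{P}(\text{Bypass}_S^{+})$, and Theorem \ref{ThmCase2} identifies it as $k_\ast r^\ast(S^1)$ for the span $\text{Bypass}_S^{+}\xleftarrow{k}\text{Bypass}_S^\text{Eul}\xrightarrow{r}BS^1$.

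The key computation is to evaluate $k_\ast r^\ast(S^1)$ at a fixed nonempty graph $\Gamma$. First I would unwind $k_\ast$ as a left Kan extension along $k$: its value on the representable presheaf $\Gamma\in\mathcal{P}(\text{Bypass}_S^{+})$ is the colimit of $r^\ast(S^1)$ over the comma category $k_{/\Gamma}$, whose objects are Eulerian-toured graphs $\Gamma'$ together with a bypass morphism $\Gamma'\to\Gamma$. By Lemma \ref{L1} the forgetful functor $k$ is a right fibration with straightening $\text{Eul}$, so $k_{/\Gamma}$ is the discrete set $\text{Eul}(\Gamma)$ — there is a unique object over each Eulerian tour on $\Gamma$ and no nonidentity morphisms. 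Hence the colimit is simply the coproduct, indexed by $\text{Eul}(\Gamma)$, of the stalks of $r^\ast(S^1)$ at the corresponding objects of $\text{Bypass}_S^\text{Eul}$. Each such stalk is the value of $r^\ast(S^1)$ on a representable, which by definition of $r^\ast$ (precomposition, i.e. restriction along $r\colon\text{Bypass}_S^\text{Eul}\to BS^1$) is the underlying space of the torsor $S^1\in\mathcal{P}(BS^1)\cong\text{Top}^{S^1}$, namely $S^1$ itself. Assembling, $\mathcal{O}_\text{thh}(\Gamma)\cong(S^1)^{\amalg\text{Eul}(\Gamma)}$, as claimed.

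Alternatively — and perhaps more cleanly, since Proposition \ref{MainThm1} is already proved — I would argue from there: we have an $S^1$-action on $\mathcal{O}_\text{thh}^{+}$ with $(\mathcal{O}_\text{thh}^{+})_{hS^1}\cong\text{Eul}$. Evaluating at $\Gamma$, the space $\mathcal{O}_\text{thh}^{+}(\Gamma)$ carries an $S^1$-action whose homotopy quotient is the discrete set $\text{Eul}(\Gamma)$. A space with an $S^1$-action whose Borel quotient is a discrete set is a disjoint union, over that set, of $S^1$-spaces each with contractible Borel quotient; and an $S^1$-space with $X_{hS^1}\cong\ast$ must be (a coproduct summand of) a free orbit $S^1$ (the fibre of $X_{hS^1}\to BS^1$ pulled back along $\ast\to BS^1$ is $X$ itself, forcing $X\simeq S^1$ as an $S^1$-space when the quotient is a point). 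This gives $\mathcal{O}_\text{thh}(\Gamma)\cong(S^1)^{\amalg\text{Eul}(\Gamma)}$ directly, and indeed this is the route the introduction advertises (``any $\mathcal{O}$ satisfying (1)-(2) evaluates on $\Gamma\neq\emptyset$ by $\mathcal{O}(\Gamma)\cong(S^1)^{\amalg\text{Eul}(\Gamma)}$'').

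The main obstacle is making the last step of the second argument precise: one must justify that an $S^1$-equivariant space over a discrete base decomposes as a coproduct of orbits, which amounts to the observation that $\text{Top}^{S^1}\cong\mathcal{P}(BS^1)$ is generated under colimits by the point and that a map to a discrete presheaf decomposes the source over the fibres. In the first (Kan-extension) argument the only subtle point is the identification $k_{/\Gamma}\simeq\text{Eul}(\Gamma)$ as a discrete category, which is precisely the content of Lemma \ref{L1} together with the fact that right fibrations over a $1$-category with discrete straightening have discrete fibres and no vertical morphisms; given that lemma the rest is bookkeeping. I would present the Kan-extension computation as the main line and remark that it also follows from Proposition \ref{MainThm1}.
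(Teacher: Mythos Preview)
Your second (``alternative'') argument is exactly the paper's proof: invoke Proposition~\ref{MainThm1} to see that $\mathcal{O}_\text{thh}^{+}(\Gamma)$ is an $S^1$-space whose homotopy orbits are the discrete set $\text{Eul}(\Gamma)$, and conclude it is a disjoint union of free $S^1$-orbits indexed by that set. That route is correct and complete, and the paper presents it as the main line rather than as a remark.

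Your preferred Kan-extension argument, however, has a genuine slip. Because presheaves are contravariant, $k_\ast$ is left Kan extension along $k^\text{op}$, so its value at $\Gamma$ is a colimit indexed by (the opposite of) the comma category $\Gamma/k$, whose objects are pairs $(\Gamma',\,\Gamma\to k(\Gamma'))$ --- the arrow points the other way from what you wrote. Moreover, even the correct comma category is not literally the discrete set $\text{Eul}(\Gamma)$: Lemma~\ref{L1} identifies only the \emph{fiber} $k^{-1}(\Gamma)$ with $\text{Eul}(\Gamma)$, not the whole slice. The argument can be repaired: since $k$ is a right fibration, the inclusion $k^{-1}(\Gamma)\hookrightarrow\Gamma/k$ is a left adjoint (its right adjoint sends $(\Gamma',\phi)$ to the source of the cartesian lift of $\phi$), so the opposite inclusion is cofinal and the colimit collapses to the coproduct over $\text{Eul}(\Gamma)$. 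With these fixes the route works and has the mild advantage of bypassing the $S^1$-action, but as written it conflates the fiber with the slice and gets the variance wrong.
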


\begin{proof}
The $\Gamma=\emptyset$ case is Proposition \ref{OthhEasy}, so assume $\Gamma\neq\emptyset$.

By Proposition \ref{MainThm1}, $$\mathcal{O}_\text{thh}(\Gamma)_{hS^1}\cong(\mathcal{O}_\text{thh})_{hS^1}(\Gamma)\cong\text{Eul}(\Gamma).$$ Therefore, $\mathcal{O}_\text{thh}(\Gamma)$ is an $S^1$-space with homotopy orbits equivalent to a \emph{set} $\text{Eul}(\Gamma)$. It must be that $\mathcal{O}_\text{thh}(\Gamma)$ is a disjoint union of circles indexed by $\text{Eul}(\Gamma)$.
\end{proof}

\noindent We call an $S^1$-equivariant presheaf $\mathcal{O}\in\text{Bypass}_S^{+}$ satisfying (as in Proposition \ref{MainThm1}) $\mathcal{O}_{hS^1}\cong\text{Eul}$ an \emph{$S^1$-bundle over Eul}.

In other words, an $S^1$-bundle over Eul is an $S^1$-equivariant object of $\mathcal{P}(\text{Bypass}_S^\text{Eul})\cong\mathcal{P}(\text{Bypass}_S^{+})_{/\text{Eul}}$ for which $\mathcal{O}_{hS^1}$ is the terminal object. Equivalently, it is a functor $$\mathcal{O}:(\text{Bypass}_S^\text{Eul})^\text{op}\rightarrow\text{Top}^{S^1}$$ which lands in the full subcategory $BS^1\subseteq\text{Top}^{S^1}$ spanned by the torsor $S^1$.

\begin{proposition}\label{MainThm2}
The moduli space of $S^1$-bundles over Eul is $BS^1\times\mathbb{Z}$.
\end{proposition}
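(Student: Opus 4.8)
The plan is to identify the moduli space of $S^1$-bundles over $\text{Eul}$ with a mapping space out of the classifying space $|\text{Bypass}_S^\text{Eul}|$, and then use Corollary \ref{CorCS} to compute that the latter is $BS^1$. By the discussion preceding the statement, an $S^1$-bundle over $\text{Eul}$ is exactly a functor $\mathcal{O}:(\text{Bypass}_S^\text{Eul})^\text{op}\to\text{Top}^{S^1}$ landing in the full subcategory $BS^1\subseteq\text{Top}^{S^1}$ spanned by the free torsor $S^1$. Since $BS^1$ is an $\infty$-groupoid, such a functor is the same as a map of spaces $|(\text{Bypass}_S^\text{Eul})^\text{op}|\to BS^1$, equivalently (since classifying spaces of opposite categories agree) a map $|\text{Bypass}_S^\text{Eul}|\to BS^1$. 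Hence the moduli space of $S^1$-bundles over $\text{Eul}$ is the mapping space $\text{Map}(|\text{Bypass}_S^\text{Eul}|,BS^1)$.

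First I would make precise what ``moduli space'' means here: it is the maximal $\infty$-subgroupoid of the $\infty$-category of $S^1$-equivariant presheaves $\mathcal{O}$ with $\mathcal{O}_{hS^1}\cong\text{Eul}$, i.e. the core of the fiber over $\text{Eul}\in\mathcal{P}(\text{Bypass}_S^+)$ of the homotopy-orbits functor on $S^1$-equivariant presheaves. Then I would invoke the translation above to rewrite this as the core of $\text{Fun}((\text{Bypass}_S^\text{Eul})^\text{op},BS^1)$; but since $BS^1$ is an $\infty$-groupoid, this functor $\infty$-category is already an $\infty$-groupoid, so it equals its own core and is $\text{Map}(|\text{Bypass}_S^\text{Eul}|, BS^1)$ by the universal property of the classifying space (Example following Example \ref{ExColim}). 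By Corollary \ref{CorCS}, $|\text{Bypass}_S^\text{Eul}|\cong BS^1$, so the moduli space is $\text{Map}(BS^1, BS^1)$.

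It then remains to compute $\text{Map}(BS^1,BS^1)$. Since $BS^1\cong\mathbb{CP}^\infty\cong K(\mathbb{Z},2)$, and $BS^1$ is simply connected with $\pi_2\cong\mathbb{Z}$ and higher homotopy trivial, the set of homotopy classes of self-maps is $[BS^1,BS^1]\cong H^2(BS^1;\mathbb{Z})\cong\mathbb{Z}$ (this integer is the \emph{degree} referred to in Theorem \ref{MainThm}). For the homotopy type of each component: the component of the degree-$n$ map (for $n\neq 0$) is computed via the fibration sequence $\text{Map}_*(BS^1,BS^1)\to\text{Map}(BS^1,BS^1)\to BS^1$ together with the fact that $\text{Map}_*(K(\mathbb{Z},2),K(\mathbb{Z},2))$ has the component of $n\cdot\text{id}$ contractible when $n\neq 0$ (since $\pi_k$ of that pointed mapping space is $H^{2-k}(BS^1;\mathbb{Z})$, which vanishes for $k>0$ when $n\ne 0$ — here one uses that multiplication by $n$ on cohomology is injective), while the evaluation-at-basepoint fibration then gives each such component $\simeq BS^1$. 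The degree-$0$ component is instead $\text{Map}(BS^1, BS^1)_0 \simeq \text{Map}_*(BS^1,BS^1)_0 \times BS^1$, and $\text{Map}_*(BS^1,BS^1)_0$ has $\pi_k \cong H^{2-k}(BS^1;\mathbb{Z})$, giving $\pi_1 \cong \mathbb{Z}$ and $\pi_2 \cong \mathbb{Z}$ — so the degree-$0$ component is \emph{not} just $BS^1$. I would therefore restrict attention to the statement as the authors intend it: enumerating components gives $\pi_0 \cong \mathbb{Z}$, and the relevant components (those of degree $\pm 1$, which is where $\mathcal{O}_\text{thh}^+$ lives by Theorem \ref{MainThm}(4)) are each $\simeq BS^1$; assembling, the moduli space is $BS^1\times\mathbb{Z}$ in the sense that it is a disjoint union over $\mathbb{Z}$ of spaces each (at least near degree $\pm 1$) equivalent to $BS^1$.

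The main obstacle is the honest computation of the connected components of $\text{Map}(BS^1,BS^1)$ — in particular being careful that the degree-zero component does not literally have the homotopy type $BS^1$, so that the clean statement ``$BS^1\times\mathbb{Z}$'' is really a statement about the nonzero-degree part (equivalently, about the $\mathbb{Z}$-torsor of ``twists'' of a fixed bundle, which is where the degree invariant of Theorem \ref{MainThm}(3) takes values). The rest — the reduction of $S^1$-bundles over $\text{Eul}$ to maps into $BS^1$, and the identification $|\text{Bypass}_S^\text{Eul}|\cong BS^1$ — is formal, resting entirely on Corollary \ref{CorCS} and the observation that a functor to an $\infty$-groupoid factors uniquely through the classifying space.
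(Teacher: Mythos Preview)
Your reduction is exactly the paper's: identify $S^1$-bundles over $\text{Eul}$ with functors $(\text{Bypass}_S^\text{Eul})^\text{op}\to BS^1$, factor through the classifying space, and apply Corollary~\ref{CorCS} to get $\text{Map}(BS^1,BS^1)$. The paper simply asserts $\text{Map}(BS^1,BS^1)\cong BS^1\times\mathbb{Z}$ without further comment.

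Your attempt to verify this last equivalence, however, contains an error that leads you to \emph{wrongly} doubt the statement. For the pointed mapping space one has $\pi_k\big(\text{Map}_*(X,K(A,n))\big)\cong\tilde{H}^{n-k}(X;A)$ with \emph{reduced} cohomology, not unreduced. Since $BS^1$ is connected, $\tilde{H}^0(BS^1;\mathbb{Z})=\tilde{H}^1(BS^1;\mathbb{Z})=0$, so $\text{Map}_*(BS^1,BS^1)\simeq\mathbb{Z}$ is homotopically discrete, and the evaluation fibration gives every component of $\text{Map}(BS^1,BS^1)$---including the degree-$0$ one---the homotopy type $BS^1$. There is no anomaly at degree~$0$, and the clean statement $\text{Map}(BS^1,BS^1)\simeq BS^1\times\mathbb{Z}$ is literally correct.

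A cleaner way to see this: $BS^1\simeq K(\mathbb{Z},2)$ is a grouplike $\mathbb{E}_\infty$-space, hence so is $\text{Map}(BS^1,BS^1)$, so all of its components are equivalent and it splits as $\pi_0\times(\text{identity component})$. Computing $\pi_k$ via $\Omega^k\text{Map}(X,K(\mathbb{Z},2))\simeq\text{Map}(X,K(\mathbb{Z},2-k))$ gives $\pi_0\cong H^2(BS^1;\mathbb{Z})\cong\mathbb{Z}$, $\pi_1\cong H^1(BS^1;\mathbb{Z})=0$, $\pi_2\cong H^0(BS^1;\mathbb{Z})\cong\mathbb{Z}$, and $\pi_{>2}=0$, whence $\text{Map}(BS^1,BS^1)\simeq\mathbb{Z}\times BS^1$.
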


\begin{proof}
By the discussion above, the moduli space of $S^1$-bundles over Eul is the full subcategory $$\text{Fun}((\text{Bypass}_S^\text{Eul})^\text{op},BS^1)\subseteq\text{Fun}((\text{Bypass}_S^\text{Eul})^\text{op},\text{Top}^{S^1}).$$ Since $BS^1$ is an $\infty$-groupoid, any functor $(\text{Bypass}_S^\text{Eul})^\text{op}\rightarrow BS^1$ factors through the classifying space, which by Corollary \ref{CorCS} is $|\text{Bypass}_S^\text{Eul}|\cong BS^1$.

Hence, the moduli space of $S^1$-bundles over Eul is $$\text{Map}(BS^1,BS^1)\cong BS^1\times\mathbb{Z}.$$
\end{proof}

\noindent As in the proof of Lemma \ref{LemCase1}, let $S^1_{(n)}\in\text{Top}^{S^1}$ denote the circle with its $S^1$-action $$\theta\cdot z=\theta^n z.$$ Writing $BS^1\xrightarrow{n} BS^1$ for the degree $n$ map, then $S^1_{(n)}=n^\ast(S^1)$.

The proof of Proposition \ref{MainThm2} asserts that every $S^1$-bundle $\mathcal{O}$ over Eul is of the form $$(\text{Bypass}_S^\text{Eul})^\text{op}\rightarrow|\text{Bypass}_S^\text{Eul}|\cong BS^1\xrightarrow{n}BS^1\subseteq\text{Top}^{S^1}$$ for some $n\in\mathbb{Z}$, or equivalently $$\mathcal{O}\cong r^\ast(S^1_{(n)})$$ for some $n\in\mathbb{Z}$, where $r:\text{Bypass}_S^\text{Eul}\rightarrow BS^1$ as usual. We call $n$ the \emph{degree} of $\mathcal{O}$.

\begin{proposition}\label{MainThm3}
As an $S^1$-bundle over Eul, $\mathcal{O}_\text{thh}^{+}$ has degree 1.
\end{proposition}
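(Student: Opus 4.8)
The plan is to compute the degree of $\mathcal{O}_\text{thh}^{+}$ by restricting to a conveniently small subcategory where the relevant circle bundle can be seen by hand, rather than working on all of $\text{Bypass}_S^\text{Eul}$ at once. By Theorem \ref{ThmCase2} we know $\mathcal{O}_\text{thh}^{+}\cong k_\ast r^\ast(S^1)$, and by the discussion preceding this proposition the degree $n$ is the unique integer with $r^\ast(S^1)\cong r^\ast(S^1_{(n)})$ as $S^1$-equivariant presheaves on $\text{Bypass}_S^\text{Eul}$; since $r$ is the composite $\text{Bypass}_S^\text{Eul}\xrightarrow{F}\text{Bypass}_\ast^\text{Eul}\cong\Lambda^\text{op}\xrightarrow{r}BS^1$ and $F^\ast\overline{\mathcal{O}}_\text{thh}^\ast\cong\overline{\mathcal{O}}_\text{thh}^S$ (the lemma in Section 6), it suffices to treat the case $S=\ast$. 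So the real task is: show that the $S^1$-bundle $\overline{\mathcal{O}}_\text{thh}\cong r^\ast(S^1)$ on $\Lambda^\text{op}$, as constructed in Proposition \ref{PropOthh}, has degree $1$ and not $-1$ (degree $0$ and all other values are already excluded by Lemma \ref{LemCase1}, which showed $\text{colim}_\Lambda(\overline{\mathcal{O}}_\text{thh})\cong\ast$ forces degree $\pm1$).

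The key point is that degree $1$ versus degree $-1$ is a question of orientation: $S^1_{(1)}$ and $S^1_{(-1)}$ are abstractly isomorphic $S^1$-spaces, so to pin down the sign one must track the $S^1$-action coherently against a fixed generator of $\pi_1 BS^1 = \pi_0 S^1_{(1)}^{\times}$, i.e.\ against a fixed orientation of the circle. Concretely I would evaluate everything at a single object: take $\mathbb{T}_0\in\Lambda$, the cyclically ordered set with one object and one irreducible (degree-1) endomorphism, so that $i_e^{-1}$ of the one-loop graph in $\text{Bypass}_\ast^\text{Eul}$ is $\mathbb{T}_0$. The cyclic space $\overline{\mathcal{O}}_\text{thh}$ evaluated here is, by Proposition \ref{PropOthh}, the geometric realization of the simplicial space $[n]\mapsto\text{Map}_\Lambda(\mathbb{T}_0,\mathbb{T}_n)$ (equivalently the cyclic bar construction of a point, or the cyclic nerve $N_\text{cyc}(\ast)$), which is $S^1$. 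Its $S^1$-action, under the identification $|{\mathbb{T}_n}|\cong S^1$ of Lemma \ref{LemDHK} coming from Proposition \ref{PropSquare}, is the standard rotation action. The degree is then read off from how the canonical degree-1 automorphism of $\mathbb{T}_0$ in $\Lambda$ (the generator of its automorphism group giving the loop in $\Lambda$ whose image under $r$ generates $\pi_2 BS^1$) acts on this $S^1$: it should act by a single rotation, in the same direction as the ambient $S^1$-action, which is exactly the assertion that the degree is $+1$.

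I would carry this out in the following steps. First, restate the goal: by Theorem \ref{ThmCase2}, the lemma of Section 6, and the definition of degree, reduce to showing $\overline{\mathcal{O}}_\text{thh}\in\mathcal{P}(\Lambda^\text{op})$ has degree $1$. Second, recall from Proposition \ref{PropOthh} that $\overline{\mathcal{O}}_\text{thh}\cong i^\ast r_\ast\mathcal{Y}$, so its $S^1$-action is the one transported from the $BS^1$-factor, i.e.\ from $r_\ast$ applied to the representable $\mathcal{Y}(-,\mathbb{T}_n)$; by Lemma \ref{LemDHK} this is the tautological $S^1$-torsor $|{\mathbb{T}_n}|\cong S^1$. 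Third, observe that the composite $\mathbb{T}_0\xrightarrow{r}BS^1$ sends the degree-1 endomorphism of $\mathbb{T}_0$ to a generator of $\pi_1 BS^1$ — this is essentially the content of the proof that $r$ exhibits $BS^1$ as the classifying space, but one must check it carries the \emph{chosen} orientation (the one in which the degree-1 morphism of $\mathbb{T}_n$ corresponds to ``going around the cycle once in the positive direction'') to the \emph{chosen} generator of $\pi_1 BS^1$ used to trivialize $\text{Map}(BS^1,BS^1)\cong BS^1\times\mathbb{Z}$ in Proposition \ref{MainThm2}. Fourth, conclude that $r^\ast(S^1)$ evaluated on $\mathbb{T}_0$, as an $S^1$-space with its monodromy along the canonical loop, is $S^1$ with the standard action and monodromy $+1$, hence degree $1$. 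The main obstacle is precisely the third step: making the sign bookkeeping honest. Both ``degree $1$'' and ``degree $-1$'' give genuinely equivalent presheaves once one forgets the identification $|\text{Bypass}_S^\text{Eul}|\cong BS^1$, so the proof cannot be purely formal — it must exhibit a specific compatible pair of orientations (one on the cyclic category side, coming from the ordering of edges in an Eulerian tour, and one on the $BS^1$ side, coming from the chosen generator of $\pi_2$) and verify that $r_\ast$ of a representable carries one to the other. I expect this to amount to unwinding the equivalence in Lemma \ref{LemDHK} (the Dwyer–Hopkins–Kan identification $|{\mathbb{T}_n}|\cong S^1$) carefully enough to see that the cyclic structure map realizing ``rotate the cycle by one edge'' is homotopic to rotation by $+1$ rather than $-1$, which is a finite, explicit check at the level of the cyclic bar construction of a point.
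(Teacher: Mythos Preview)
Your proposal substantially overcomplicates the argument by attacking a sign question that the paper has arranged not to exist. Look again at how the ``canonical $S^1$-action'' on $\mathcal{O}_\text{thh}^{+}$ is introduced in Proposition~\ref{MainThm1}: it is \emph{defined} to be the one transported along the equivalence $\mathcal{O}_\text{thh}^{+}\cong k_\ast r^\ast(S^1)$ of Theorem~\ref{ThmCase2}, using the residual $S^1$-action on $S^1$ as a bimodule. With that definition in hand, and with ``degree $n$'' defined immediately before Proposition~\ref{MainThm3} to mean that the associated functor $(\text{Bypass}_S^\text{Eul})^\text{op}\to BS^1$ is $r$ followed by the degree-$n$ self-map of $BS^1$, the statement ``$\mathcal{O}_\text{thh}^{+}$ has degree $1$'' is literally the statement $\mathcal{O}_\text{thh}^{+}\cong k_\ast r^\ast(S^1_{(1)})=k_\ast r^\ast(S^1)$, which is Theorem~\ref{ThmCase2} together with the fact (Corollary~\ref{CorComb}) that $k_\ast$ is an equivalence onto the slice over $\text{Eul}$. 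That is the paper's entire proof.

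Your orientation-tracking program---restricting to $\mathbb{T}_0$, unwinding Lemma~\ref{LemDHK}, checking that the cyclic rotation realizes as $+1$ rather than $-1$---would be relevant if the $S^1$-action on $\mathcal{O}_\text{thh}^{+}$ had been defined \emph{independently}, say directly from the cyclic structure on the bar construction, and one then asked whether that action matches $r^\ast(S^1_{(1)})$ or $r^\ast(S^1_{(-1)})$. That is a reasonable question (and the paper's later Corollary hedges with $\text{Eul}(\pm 1)$ for exactly this reason), but it is not what Proposition~\ref{MainThm3} asserts. You have misread the logical structure: the hard work was already absorbed into Theorem~\ref{ThmCase2} and Lemma~\ref{LemCase1}, and Proposition~\ref{MainThm3} is just a restatement in the language of degree. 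No reduction to $S=\ast$, no evaluation at $\mathbb{T}_0$, and no explicit sign check is needed.
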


\begin{proof}
This is just the assertion that $\mathcal{O}_\text{thh}^{+}\cong k_\ast r^\ast(S^1)$, which is Theorem \ref{ThmCase2}, remembering that $k_\ast$ specializes to an equivalence $\mathcal{P}(\text{Bypass}_S^\text{Eul})\rightarrow\mathcal{P}(\text{Bypass}_S^{+})_{/\text{Eul}}$.
\end{proof}

\noindent Our main theorem is a summary of Propositions \ref{MainThm1}, \ref{MainThm2}, and \ref{MainThm3}:

\begin{theorem}\label{MainThm}
If $\mathcal{O}_\text{thh}^{+}$ is as above, then:
\begin{enumerate}
\item $\mathcal{O}_\text{thh}^{+}$ has a canonical $S^1$-action;
\item $(\mathcal{O}_\text{thh}^{+})_{hS^1}\cong\text{Eul}$;
\item The moduli space of presheaves satisfying (1)-(2) is equivalent to $\mathbb{Z}\times BS^1$; that is, such presheaves are determined up to equivalence by an integer invariant we call \emph{degree};
\item The degree of $\mathcal{O}^{+}_\text{thh}$ is $1$.
\end{enumerate}
\end{theorem}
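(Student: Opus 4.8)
The plan is to deduce Theorem \ref{MainThm} by combining three separate analyses of $\mathcal{O}_\text{thh}^{+}$, each relying only on Theorem \ref{ThmCase2} together with Corollaries \ref{CorCS} and \ref{CorComb}. For (1) and (2) I start from the identification $\mathcal{O}_\text{thh}^{+}\cong k_\ast r^\ast(S^1)$ of Theorem \ref{ThmCase2}. The circle $S^1$, regarded as an $S^1$-bimodule, still carries an $S^1$-action as an object of $\text{Top}^{S^1}$; applying $r^\ast$ and then $k_\ast$ transports this action to $\mathcal{O}_\text{thh}^{+}$, which gives (1). For (2) I note that $r^\ast$ and $k_\ast$ each admit right adjoints (right Kan extension along $r$, respectively $k^\ast$), hence preserve colimits, so $(\mathcal{O}_\text{thh}^{+})_{hS^1}\cong k_\ast r^\ast\bigl((S^1)_{hS^1}\bigr)\cong k_\ast r^\ast(\ast)\cong k_\ast(\ast)$, which is $\text{Eul}$ by the last clause of Corollary \ref{CorComb}.

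For (3) I would first rigidify the data. By Corollary \ref{CorComb}, $\mathcal{P}(\text{Bypass}_S^{+})_{/\text{Eul}}\cong\mathcal{P}(\text{Bypass}_S^\text{Eul})$, and under this equivalence an $S^1$-equivariant presheaf whose orbits are the terminal object $\text{Eul}$ corresponds to a functor $\mathcal{O}:(\text{Bypass}_S^\text{Eul})^\text{op}\rightarrow\text{Top}^{S^1}$ landing in the full subcategory spanned by the free torsor, a subcategory equivalent to the $\infty$-groupoid $BS^1$. Since $BS^1$ is a space, any such $\mathcal{O}$ factors uniquely through the classifying space $|\text{Bypass}_S^\text{Eul}|$, and this is $BS^1$ by Corollary \ref{CorCS}. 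Thus the moduli space of presheaves satisfying (1)--(2) is $\text{Map}(BS^1,BS^1)$; using $BS^1\cong K(\mathbb{Z},2)$ and the standard computation of mapping spaces into Eilenberg--MacLane spaces, this is $\mathbb{Z}\times BS^1$, the discrete factor $\mathbb{Z}\cong H^2(BS^1;\mathbb{Z})$ being the invariant we call \emph{degree} (and each component a $K(\mathbb{Z},2)\cong BS^1$, as $H^1(BS^1;\mathbb{Z})=0$ and $H^0(BS^1;\mathbb{Z})=\mathbb{Z}$).

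Finally, for (4): Theorem \ref{ThmCase2} identifies $\mathcal{O}_\text{thh}^{+}$ with $k_\ast r^\ast$ of the \emph{standard} torsor $S^1$, which under the equivalence of (3) is $r^\ast$ of the tautological torsor — that is, the functor $(\text{Bypass}_S^\text{Eul})^\text{op}\xrightarrow{r}BS^1$ composed with the identity of $BS^1$ — so its degree is $1$. I expect the only genuinely substantive step to be the rigidification opening (3): passing from "$S^1$-equivariant presheaf over $\text{Eul}$ with orbits $\text{Eul}$" to "functor into $BS^1$", which is precisely where the two-way count of Eulerian tours enters through Corollaries \ref{CorCS} and \ref{CorComb}. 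The mapping-space computation $\text{Map}(BS^1,BS^1)\cong\mathbb{Z}\times BS^1$ is standard, and (1), (2), (4) amount to bookkeeping with the relevant adjunctions once Theorem \ref{ThmCase2} is granted.
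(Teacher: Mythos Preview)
Your proposal is correct and follows essentially the same approach as the paper: parts (1)--(2) are Proposition~\ref{MainThm1} verbatim, part (3) is Proposition~\ref{MainThm2} (with slightly more detail on $\text{Map}(BS^1,BS^1)$ than the paper gives), and part (4) is Proposition~\ref{MainThm3}. The only cosmetic difference is that the paper packages these as three separate numbered propositions before assembling them into Theorem~\ref{MainThm}, whereas you present the argument as a single continuous proof.
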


\noindent Let $\text{Eul}(n)\in\mathcal{P}(\text{Bypass}_S^{+})$ denote the $S^1$-bundle of degree $n$ over Eul.

\begin{corollary}
If $\mathcal{V}$ is presentable and symmetric monoidal, and $\mathcal{C}:\text{Bypass}_S\rightarrow\mathcal{V}$ is a $\mathcal{V}$-enriched category, then $$\mathcal{C}_\ast\text{Eul}(\pm 1)\cong\text{THH}(\mathcal{C}),$$ $$\mathcal{C}_\ast\text{Eul}\cong\text{THH}(\mathcal{C})_{hS^1},$$ $$\mathcal{C}_\ast\text{Eul}(\pm n)\cong\text{THH}(\mathcal{C})_{C_n},\text{ if }n\geq 1,$$ $$\mathcal{C}_\ast\text{Eul}(0)\cong S^1\otimes\text{THH}(\mathcal{C})_{hS^1}.$$
\end{corollary}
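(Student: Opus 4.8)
The plan is to deduce each identity from Theorem \ref{ThmCase2} together with the basic adjunction formalism, using that $\mathcal{C}_\ast$ preserves colimits and that $\text{Eul}(n)\cong r^\ast(S^1_{(n)})$ under the identification $\mathcal{P}(\text{Bypass}_S^\text{Eul})\cong\mathcal{P}(\text{Bypass}_S^{+})_{/\text{Eul}}$ of Corollary \ref{CorComb}. First I would set up the composite $\mathcal{C}_\ast k_\ast r^\ast:\mathcal{P}(BS^1)\rightarrow\mathcal{V}$, noting that it preserves colimits (each factor does, since $r^\ast$ and $k_\ast$ have right adjoints and $\mathcal{C}_\ast$ is defined to be colimit-preserving), and that by Theorem \ref{ThmCase2} it sends the torsor $S^1\in\mathcal{P}(BS^1)$ to $\mathcal{O}_\text{thh}^{+}$ and hence $\mathcal{C}_\ast\mathcal{O}_\text{thh}^{+}\cong\text{THH}(\mathcal{C})$ by Definition \ref{DefTHH} (the restriction to $\text{Bypass}_S^{+}$ is harmless for the realization since the $\emptyset$ summand does not appear in $\mathcal{O}_\text{thh}$'s simplicial object beyond what is already accounted for — or one argues $\mathcal{C}_\ast$ of the inclusion $\mathcal{O}_\text{thh}^{+}\hookrightarrow\mathcal{O}_\text{thh}$ is an equivalence, cf. Proposition \ref{OthhEasy}). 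This gives the $\pm1$ case immediately, using $S^1_{(1)}\cong S^1_{(-1)}$.

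Next, for the remaining cases I would express $S^1_{(n)}$ as a colimit of the torsor in $\mathcal{P}(BS^1)\cong\text{Top}^{S^1}$ and push through $\mathcal{C}_\ast k_\ast r^\ast$. Concretely: $S^1_{(0)}$ is the constant $S^1$-space on the underlying set $S^1$, i.e. $S^1\otimes(\ast)$ where $\ast$ is the point with trivial action, so $\mathcal{C}_\ast k_\ast r^\ast(S^1_{(0)})\cong S^1\otimes\mathcal{C}_\ast k_\ast r^\ast(\ast)$; and $r^\ast(\ast)\cong\ast$, $k_\ast(\ast)\cong\text{Eul}$ by Corollary \ref{CorComb}, so this is $S^1\otimes\mathcal{C}_\ast(\text{Eul})$. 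For $\text{Eul}$ itself, I would use that $\text{Eul}\cong(\mathcal{O}_\text{thh}^{+})_{hS^1}$ by Proposition \ref{MainThm1} and that $\mathcal{C}_\ast$ preserves the $S^1$-homotopy-orbit colimit, giving $\mathcal{C}_\ast(\text{Eul})\cong\text{THH}(\mathcal{C})_{hS^1}$; feeding this back shows $\mathcal{C}_\ast\text{Eul}(0)\cong S^1\otimes\text{THH}(\mathcal{C})_{hS^1}$. For $\text{Eul}(\pm n)$ with $n\geq1$, I would use (as in the proof of Lemma \ref{LemCase1}) that $r^\ast$ preserves colimits and $S^1_{(n)}\cong(S^1_{(1)})_{hC_n}$ in $\text{Top}^{S^1}$, so $\mathcal{O}_\text{thh}^{+}$-with-twist $n$ is $(\mathcal{O}_\text{thh}^{+})_{hC_n}$; applying the colimit-preserving $\mathcal{C}_\ast$ then yields $\mathcal{C}_\ast\text{Eul}(\pm n)\cong\text{THH}(\mathcal{C})_{hC_n}$ (the corollary's subscript $C_n$ being homotopy orbits), using $S^1_{(n)}\cong S^1_{(-n)}$ for the sign.

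I expect the main subtlety — not a deep obstacle, but the point requiring care — to be the bookkeeping between the two presentations of $\text{Eul}(n)$: as a presheaf on $\text{Bypass}_S^{+}$ with $S^1$-action (living in $\mathcal{P}(\text{Bypass}_S^{+})_{/\text{Eul}}$) versus as $r^\ast(S^1_{(n)})$ on $\text{Bypass}_S^\text{Eul}$, and making sure $\mathcal{C}_\ast$ (a priori defined on $\mathcal{P}(\text{Bypass}_S)$, hence $\mathcal{P}(\text{Bypass}_S^{+})$ via the fully faithful inclusion) interacts correctly with $k_\ast:\mathcal{P}(\text{Bypass}_S^\text{Eul})\xrightarrow{\sim}\mathcal{P}(\text{Bypass}_S^{+})_{/\text{Eul}}$. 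I would handle this by working entirely on $\text{Bypass}_S^\text{Eul}$: define the relevant invariant as $(\mathcal{C}k)_\ast r^\ast(S^1_{(n)})$ where $(\mathcal{C}k)_\ast=\mathcal{C}_\ast\circ(\text{forgetful to }\text{Bypass}_S)_\ast$, check this equals $\mathcal{C}_\ast\text{Eul}(n)$ by Corollary \ref{CorComb}, and then all four identities follow from the colimit computations above since every functor in sight preserves colimits. The only other thing to verify is the harmless passage between $\mathcal{O}_\text{thh}$ and $\mathcal{O}_\text{thh}^{+}$ under $\mathcal{C}_\ast$, which follows because the empty-graph summand contributes a constant simplicial object with contractible realization onto a summand that is absorbed, or more simply because the simplicial object computing $\text{THH}(\mathcal{C})$ only involves the representables $(X_0,\ldots,X_n,X_0)$ which are nonempty.
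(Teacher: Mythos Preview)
Your proposal is correct and follows essentially the same route as the paper: identify $\text{Eul}(n)\cong k_\ast r^\ast(S^1_{(n)})$ and $\text{Eul}\cong k_\ast r^\ast(\ast)$, note that $r^\ast$, $k_\ast$, and $\mathcal{C}_\ast$ all preserve colimits, and then read off each identity from the relations $\ast\cong(S^1_{(1)})_{hS^1}$, $S^1_{(n)}\cong(S^1_{(1)})_{hC_n}$, $S^1_{(0)}\cong S^1\otimes\ast$ in $\text{Top}^{S^1}$. Your extra care about the passage between $\mathcal{O}_\text{thh}$ and $\mathcal{O}_\text{thh}^{+}$ is well-placed (and the paper leaves it implicit); your second justification --- that the cyclic bar construction only involves the nonempty representables $(X_0,\ldots,X_n,X_0)$, so $\mathcal{O}_\text{thh}$ lies in the essential image of $i_\ast:\mathcal{P}(\text{Bypass}_S^{+})\rightarrow\mathcal{P}(\text{Bypass}_S)$ --- is the clean way to handle it.
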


\begin{proof}
The first statement summarizes the theorem. The later statements are because $\text{Eul}(n)\cong k_\ast r^\ast(S^1_{(n)})$, $\text{Eul}\cong k_\ast r^\ast(\ast)$, each of $r^\ast$, $k_\ast$, and $\mathcal{C}_\ast$ preserves colimits, and we have the identities in $\text{Top}^{S^1}$: $$\ast\cong(S^1_{(1)})_{hS^1}$$ $$S^1_{(n)}\cong(S^1_{(1)})_{hC_n}$$ $$S^1_{(0)}\cong S^1\otimes\ast.$$
\end{proof}

\end{document}